\newtheorem{lemma}{Lemma}[section]
\newtheorem{proposition}{Proposition}[section]
\newtheorem{theorem}{Theorem}[section]
\newtheorem{corollary}{Corollary}[section]
\newtheorem{remark}{Remark}[section]
\newtheorem{assumption}{Assumption}[section]
\def\section{\@startsection{section}{1}%
\z@{1\linespacing\@plus\linespacing}{1\linespacing}%
{\bf\centering}}
\def\subsection{\@startsection{subsection}{0}%
\z@{\linespacing\@plus\linespacing}{\linespacing}%
{\bf}}
\DeclareMathOperator{\Dom}{Dom}
\DeclareMathOperator{\Spec}{Spec}
\DeclareMathOperator{\Per}{Per}
\newcommand{\Norm}[2]{\left\Vert #1 \right\Vert_{#2}}
\providecommand{\pro}[1]{(#1_t)_{t \geq 0}}
\providecommand{\semi}[1]{\{#1_t: t \geq 0\}}
\newcommand{\cD}{\mathcal{D}}
\newcommand{\cK}{\mathcal{K}}
\newcommand{\cV}{\mathscr{V}}
\newcommand{\cB}{\mathcal{B}}
\newcommand{\cJ}{\mathcal{J}}
\newcommand{\cH}{\mathcal{H}}
\newcommand{\cU}{\mathcal{U}}
\newcommand{\R}{\mathbb{R}}
\newcommand{\E}{\mathbb{E}}
\newcommand{\bP}{\mathbb{P}}
\newcommand{\ex}{\mathbb{E}}
\newcommand{\Rd}{\mathbb{R}^d}
\newcommand{\N}{\mathbb{N}}
\definecolor{mr}{rgb}{0.1,0.2,0.7}
\begin{document}
\title[Bulk behaviour of ground states]
{\small Bulk behaviour of ground states for relativistic Schr\"odinger operators
with compactly supported potentials}
\author{Giacomo Ascione and J\'ozsef L{\H o}rinczi}

\address{Giacomo Ascione,
Scuola Superiore Meridionale \\
Universit\`a degli Studi di Napoli Federico II, 80126 Napoli, Italy}
\email{giacomo.ascione@unina.it}
\address{J\'ozsef L\H orinczi,
Alfr\'ed R\'enyi Institute of Mathematics \\
1053 Budapest, Hungary}
\email{lorinczi@renyi.hu}

\begin{abstract}
We propose a probabilistic representation of the ground states of massive and massless Schr\"odinger
operators with a potential well in which the behaviour inside the well is described in terms of the 
moment generating function of the first exit time from the well, and the outside behaviour in terms 
of the Laplace transform of the first entrance time into the well. This allows an analysis of their 
behaviour at short to mid-range from the origin. In a first part we derive precise estimates on these 
two functionals for stable and relativistic stable processes. Next, by combining scaling properties 
and heat kernel estimates, we derive explicit local rates of the ground states of the given family 
of non-local Schr\"odinger operators both inside and outside the well. We also show how this approach 
extends to fully supported decaying potentials. By an analysis close-by to the edge of the potential 
well, we furthermore show that the ground state changes regularity, which depends qualitatively on the 
fractional power of the non-local operator.  

\medskip
\noindent
\emph{Key-words}: massive and massless Schr\"odinger operator, fractional Laplacian, potential well,
Feynman-Kac formula, stable processes, relativistic stable processes, occupation measure, exit time,
ground state

\medskip
\noindent
2010 {\it MS Classification}: Primary 47D08, 60G51; Secondary 47D03, 47G20

\end{abstract}

\maketitle

\baselineskip 0.55 cm

\vspace{-0.5cm}
\tableofcontents

\newpage

\section{Introduction}
The purpose of this paper is to introduce and explore a relationship between the moment generating functions and Laplace transforms
of first hitting times of rotationally symmetric stable and relativistic stable processes, and the ground states of related non-local
Schr\"odinger operators. Making use of this relationship, via precise estimates of these random time functionals we will be able to
derive and prove the spatial localization properties of ground states in the bulk, i.e., for short to middle range from the origin.

The (semi-)relativistic Schr\"odinger operator $H = (-\Delta + m^2)^{1/2}-m + V$ on $L^2(\R^3)$, describing the Hamiltonian of
an electrically charged particle with rest mass $m > 0$ moving under a Coulomb potential $V$ is one of the fundamental models
of mathematical quantum theory, and it has been studied extensively in the literature. Classic papers include \cite{S63,B91,L93}
on the square-root Klein-Gordon equation, \cite{W74,H77,D84,DL} on the properties of the spectrum, stability of the matter
\cite{LY,FL,FLS,LS10}, and eigenfunction decay \cite{CMS}. More recent developments further addressed low-energy scattering
theory \cite{RU16}, embedded eigenvalues and Neumann-Wigner type potentials \cite{LS17}, decay rates when magnetic
potentials and spin are included \cite{HIL13}, a relativistic Kato-inequality \cite{HIL17}, Carleman estimates and unique
continuation \cite{RSV,FF}, or nonlinear relativistic Schr\"odinger equations \cite{ZN,S17,A20}. Given its relationship with
random processes with jumps, the $V=0$ case has received much attention also in potential theory \cite{R02,GR,BMR09}.

There are only a very few examples around for which the spectrum and eigenfunctions of relativistic Schr\"odinger operators are
explicitly determined  \cite{LM,DL}, when the potential is confining rather than decaying, and interesting approximations of
spectra and eigenfunctions for some other cases have been obtained in \cite{KKL}. Thus estimates on the eigenfunctions have a
special relevance. While eigenfunction decay at infinity for a large class of non-local Schr\"odinger operators, including the
relativistic operator, is now understood to a great detail in function of the asymptotic behaviour of the potential
\cite{CMS,HIL13,KL15,KL17}, very little is known on their local behaviour, i.e., for small to medium distances from the origin.
Some information on local properties of eigenfunctions of non-local Schr\"odinger operators with Bernstein functions of the
Laplacian and general potential wells have been obtained in \cite[Sect. 4]{BL19}. Specifically, these include estimates on the
distance of the location of global extrema of eigenfunctions from the edge of the potential well or specific level sets. For
domain operators results in a similar spirit have been obtained in \cite{BKM,B18a}.

Our goal in this paper is to make up for this hiatus and derive the local behaviour of the ground state of the relativistic
operator when $V$ is chosen to be a bounded potential of compact support, and show the extension of our technique to fully
supported potentials. Instead of the above operator, we will consider more generally
\begin{equation*}
H_{m,\alpha} = (-\Delta + m^{2/\alpha})^{\alpha/2}-m + V
\end{equation*}
on $L^2(\R^d)$, with $0 < \alpha < 2$, $m \geq 0$, and $d \in \mathbb N$, and for simplicity we call it in the $m>0$ case the
massive, and for $m=0$ the massless relativistic Schr\"odinger operator. In case $V = -v{\mathbf 1}_\cK$ with a bounded set
$\cK \subset \R^d$ with non-empty interior, we say that $V$ is a potential well with coupling constant (or depth) $v > 0$.

The main idea underlying our approach is simple, and it can be highlighted on the case of a spherical potential well $\cK =
\cB_a$, where $\cB_a$ is a ball of radius $a$ centered in the origin. When the operator $H_{m,\alpha}$ has a ground state
$\varphi_0$ at eigenvalue $\lambda_0 = \inf\Spec H_{m,\alpha} $, a path integral representation gives
\begin{equation*}
e^{-tH_{m,\alpha}}\varphi_0(x) = e^{\lambda_0 t}\ex^x[e^{-\int_0^t V(X_s)ds}\varphi_0(X_t)], \quad t \geq 0,
\end{equation*}
for every point  $x \in \R^d$ (see \cite{LHB}), where now $\int_0^tV(X_s)ds = -v\int_0^t {\mathbf 1}_{\cB_a}(X_s) ds=-vU_t(a)$
is, apart from the constant prefactor, the occupation measure in the ball of the process $\pro X$ starting at $x$, and $\ex^x$
is expectation with respect to its path measure. Clearly, the potential contributes as long as $X_t \in \cB_a$ only, thus we
may consider the first exit time $\tau_{a} = \inf\{t>0: X_t \in \cB_a^c\}$ when starting from the inside, and the first entrance
time $T_{a} = \inf\{t> 0: X_t \in \cB_a\}$ when starting from outside of the well. Since, crucially, $\big(e^{\lambda_0 t}
e^{vU_t(a)}\varphi_0 (X_t)\big)_{t\geq 0}$ can be shown to be a martingale, by optional stopping we get
\begin{equation}
\label{forBM}
\varphi_0(x) =
\; \left\{
\begin{array}{lll}
\ex^x[e^{(v-|\lambda_0|)\tau_{a}}\varphi_0(X_{\tau_{a}})] & \mbox{if $x \in \cB_a$} \vspace{0.2cm} \\
\ex^x[e^{-|\lambda_0|T_{a}}\varphi_0(X_{T_{a}})]  & \mbox{if $x \in \cB_a^c$}.
\end{array}\right.
\end{equation}
When we work with a classical Schr\"odinger operator having $-\frac{1}{2}\Delta$ instead of the relativistic operator,
so that $\pro X = \pro B$ is Brownian motion, due to path continuity the random variables $B_{T_{a}}$ and $B_{\tau_{a}}$ are
supported on the boundary of $\cB_a$, and $\varphi_0$ can be determined exactly. (This is shown in full detail in Section
\ref{primex} below.) When we work with $H_{m,\alpha}$, then $\pro X$ is a jump process and now the supports of $X_{T_{a}}$
and $X_{\tau_{a}}$ spread over the full sets $\cB_a$ and $\cB_a^c$, respectively. Nevertheless, since $|X_{T_{a}}| \leq a$
and $|X_{\tau_{a}}| \geq a$,
using that $\varphi_0$ is (in a spherical potential well, radially) monotone decreasing, the expressions \eqref{forBM}
yield good approximations. Indeed, our main goal in this paper is to derive precise estimates of these functionals and
show how they give tight two-sided bounds on the ground states.
We note that while for the classical Schr\"odinger operator one, though not the only, way to obtain \eqref{forBM} is
through the actual solution of the eigenvalue equation, which is a PDE, this route for $H_{m,\alpha}$ is unworkable as
the solution of a  similar non-local equation is unavailable even for the simplest choices of potential well. Thus the
probabilistic alternative which we develop in this paper will prove to be useful in serving this purpose.

To derive bulk estimates of the ground state, we go through these steps systematically leading to the following main results.
\begin{trivlist}
\item[\, (1)]
\emph{Symmetry properties of the ground state.} It is intuitively clear that the ground state should inherit the
symmetry properties of the potential well, which is also a technically relevant ingredient in deriving local estimates.
In Theorem \ref{thm:rsym} we show rotational symmetry of the ground state when the potential well is a ball, and in
Theorem \ref{thm:asym} reflection symmetry when the potential well has the same symmetry with respect to a hyperplane.


\item[\, (2)]
\emph{Local estimates of the ground state.}
In Theorem \ref{mainth} we prove that, like anticipated above, \eqref{forBM} allow to derive two-sided bounds and the
ground state of $H_{m,\alpha}$ with $m \geq 0$ can be approximated like
\begin{equation}
\label{gsapproxi}
\varphi_0(x) \asymp
\; \left\{
\begin{array}{lll}
\varphi_0(\mathbf{a})\E^x\big[e^{(v-|\lambda_0|)\tau_a}\big] & \mbox{if $x \in \cB_a$} \vspace{0.2cm} \\
\varphi_0(\mathbf{a})\E^x[e^{-|\lambda_0|T_{a}}]  & \mbox{if $x \in \cB_a^c$},
\end{array}\right.
\end{equation}
where $\mathbf{a}=(a,0,\ldots,0)$, and the dependence of the comparability constants on the parameters of the
non-local operator, potential well and spatial dimension can be tracked throughout. By deriving precise two-sided
estimates on the moment generating function of $\tau_a$ and the Laplace transform of $T_{a}$ in Section 3, we
can make the expressions more explicit and obtain
\begin{equation}
\label{expliest}
\frac{\varphi_0(x)}{\varphi_0(\mathbf{a})} \asymp
\; \left\{
\begin{array}{lll}
1+\frac{v-|\lambda_0|}{\lambda_a-v+|\lambda_0|}\left(\frac{a-|x|}{a}\right)^{\alpha/2} 
& \mbox{if \; $x \in \cB_a$} \vspace{0.2cm} \\
j_{m,\alpha}(|x|) & \mbox{if \; $x \in \cB_a^c$},
\end{array}\right.
\end{equation}
see Corollary \ref{cor:asymp}, where $j_{m,\alpha}$ denotes the jump kernel of the operator $L_{m,\alpha}$ (see
details in Section 2.1), and $\lambda_a = \lambda_a(m,\alpha)$ is its principal Dirichlet eigenvalue for $\cB_a$.
While the comparability constants depend on $m$, inside the potential well the $x$-dependence is the same for
both the massless and massive cases, reflecting the fact that the two processes are locally comparable. Since by
using the $L^2$-normalization condition on the ground state the value $\varphi_0(\mathbf{a})$ can further be estimated
from both sides (Proposition \ref{ata}), the right hand side above actually provides bounds on $\varphi_0$ itself, with 
a new proportionality constant (Corollary \ref{cor:asymp2}).
As an application of the information on the local behaviour, in Propositions \ref{moments1}-\ref{moments2} below
we estimate the ground state expectations $\Lambda_p(\varphi_0) = \left(\int_{\R^d} |x|^p \varphi_0^2(x)dx\right)^{1/p}$,
i.e., the moments of the position in the weighted space $L^2(\R^d,\varphi_0^2dx)$ describing the ``halo" or size of the
ground state on different scales.
Finally, in Theorems \ref{thm:main1noncomp}-\ref{thm:mainoutnoncomp} we obtain counterparts of
\eqref{gsapproxi}-\eqref{expliest} to bounded decaying potentials supported everywhere in $\R^d$, giving estimates of
$\varphi_0$ on appropriate level sets of the potential.

Using all this information, we also get some insight into the mechanisms driving these two regimes of behaviour:
\begin{enumerate}
\item[(i)]
\emph{Inside the potential well.}
Since we show that $(a-|x|)^{\frac{\alpha}{2}} \asymp \ex^x[\tau_{a}]$, from
\eqref{expliest} we see that the behaviour of $\varphi_0(x)/\varphi_0(\mathbf{a})$ is essentially determined
by the ratio $\ex^x[\tau_a]/\ex^0[\tau_a]$ of mean exit times. Note that this is different from the case of the
classical Schr\"odinger operator with the same potential well (see Section \ref{primex} below). For Brownian
motion in $\R^d$ it is well known that $\ex^x[\tau_a] = \frac{1}{d}(a^2-|x|^2)$ and the moment generating
function of $\tau_a$ for $d=1$ is given by $\ex^x[e^{u\tau_{a}}] = \cos (\sqrt{2u}x)/\cos(\sqrt{2u} a)$
(and Bessel functions for higher dimensions, see Remark \ref{CT} below), thus the relation
$\varphi_0(x)/\varphi_0(\mathbf{a}) \approx \ex^x[\tau_a]/\ex^0[\tau_a]$ no longer holds and the higher order
moments of $\tau_a$ contribute significantly. The reason for this can be appreciated to be that the
$\alpha$-stable and relativistically $\alpha$-stable processes related to $L_{m,\alpha}$ and $L_{0,\alpha}$,
respectively, have a different nature from Brownian motion. Indeed, we have shown previously that these two
processes satisfy the jump-paring property, i.e., that all multiple large jumps are stochastically dominated
by single large jumps, while Brownian motion evolves through typically small increments and builds up
``backlog events" inflating sojourn times (for the definitions and discussion see \cite[Sect. 2.1]{KL15},
\cite[Def. 2.1, Rem. 4.4]{KL17}). Furthermore, it is also seen from \eqref{expliest} that the ratio between
the maximum $\varphi_0(0)$ of the ground state and $\varphi_0(\mathbf{a})$ is determined by $\frac{\lambda_a}
{\lambda_a-(v-|\lambda_0|)}$, i.e., in fact the ratio of the gap between the ground state energy from the minimum
value of the potential and the energy necessary to climb and leave the well.

\item[(ii)]
\emph{Outside the potential well.}
The behaviour outside is governed by the L\'evy measure which was shown in \cite{KL17} for large enough $|x|$
and we see here by a different approach that this already sets in from the boundary of the potential well. This
is heuristically to be expected due to free motion everywhere outside the well, while to see a ``second order"
contribution of non-locality (distinguishing between polynomially vs exponentially decaying jump measures)
around the boundary of the well would need more refined tools.

\item[(iii)]
\emph{At the boundary of the potential well.} From the profile functions given by \eqref{expliest} it can be
conjectured that, although the ground state is continuous (see Section 2.2 below), its change of behaviour around
the potential well is rather abrupt. Indeed, in Theorem \ref{irreg} and Remark \ref{bdryirreg} we show that at the
boundary $\varphi_0 \not \in C^{\alpha+\delta}_{\rm loc}(\cB_{a+\varepsilon} \setminus \overline{\cB}_{a-\varepsilon})$
for every $\delta \in (0,1-\alpha)$ whenever $\alpha \in (0,1)$, and $\varphi_0 \not \in C^{1,\alpha+\delta-1}_{\rm loc}
(\cB_{a+\varepsilon}\setminus \overline{\cB}_{a-\varepsilon})$ for every $\delta \in (0,2-\alpha)$ whenever $\alpha \in
[1,2)$, for any small $\varepsilon > 0$. This implies that for the range of small $\alpha$ the ground state cannot be
$C^1$ at the boundary, and for values of $\alpha$ starting from 1 it cannot be $C^2$ at the boundary.
\end{enumerate}

\vspace{0.1cm}
\item[\, (3)]
\emph{Entrance/exit time estimates.} All these results depend on precise two-sided estimates on the moment generating
function for exit times from balls, and the Laplace transform of hitting times for balls, which we provide here (Section
3). Clearly, these are of independent interest in probabilistic potential theory; for further applications see \cite{OOT}
on crossing times of subordinate Bessel processes.
\end{trivlist}

For the remaining part of the paper, we proceed in Section 2 to a precise description of the operators and processes,
and in Section 3 to presenting the details of hitting/exit time estimates. Then in Section 4 we show the martingale
property mentioned above and symmetry of the ground state, and in Section 5 derive the local estimates, regularity results
and study the moments of the position in the ground states.

\section{Preliminaries}
\subsection{The massive and massless relativistic operators}
Let $\alpha\in (0, 2)$, $m\geq 0$, $\Phi_{m,\alpha}(z)=(z+m^{2/\alpha})^{\alpha/2}-m$ for every $z \ge 0$, and denote
\begin{eqnarray*}
L_{m,\alpha} \!\!\! &=& \!\!\! \Phi_{m,\alpha}(-\Delta) =(-\Delta+m^{2/\alpha})^{\alpha/2}-m  \quad \mbox{if $m>0$}\\
L_{0,\alpha} \!\!\! &=& \!\!\! \Phi_{0,\alpha}(-\Delta) = (-\Delta)^{\alpha/2}  \hspace{2.6cm} \mbox{if $m=0$}.
\end{eqnarray*}
We will combine the notation into just $L_{m,\alpha}$, $m \geq 0$, when a statement refers to both cases. These operators
can be defined in several possible ways. We define them via the Fourier multipliers
\begin{equation*}
\widehat{(L_{m,\alpha} f)}(y) = \Phi_{m,\alpha}(|y|^2)\widehat f(y), \quad y \in \R^d, \; f \in
\Dom(L_{m,\alpha}),
\end{equation*}
with domain
\begin{equation*}
\Dom(L_{m,\alpha})=\Big\{f \in L^2(\R^d): \Phi_{m,\alpha}(|\cdot|^2) \widehat f \in L^2(\R^d) \Big\},
\quad m\geq 0.
\end{equation*}
Then for $f \in C^\infty_{\rm c}(\R^d)$ the expressions
\begin{equation}\label{intrep}
L_{m,\alpha}f(x) = -\lim_{\varepsilon\downarrow 0} \int_{|y-x|>\varepsilon} \left(f(y)-f(x)\right)\nu_{m,\alpha}(dy)
\end{equation}
hold, with the L\'evy measures
\begin{equation*}
\nu_{m,\alpha}(dx) = j_{m,\alpha}(|x|)dx = \frac{2^{\frac{\alpha-d}{2}} m^{\frac{d+\alpha}{2\alpha}} \alpha}
{\pi^{d/2}\Gamma(1-\frac{\alpha}{2})}
\frac{K_{(d+\alpha)/2} (m^{1/\alpha}|x|)}{|x|^{(d+\alpha)/2}} \, dx, \quad x \in \R^d\setminus \{0\},
\end{equation*}
for $m>0$ (\emph{relativistic fractional Laplacian}), and
\begin{equation*}
\nu_{0,\alpha}(dx) = j_{0,\alpha}(|x|)dx = \frac{2^\alpha \Gamma(\frac{d+\alpha}{2})}{\pi^{d/2}|\Gamma(-\frac{\alpha}{2})|}
\frac{dx}{|x|^{d+\alpha}}, \quad x \in \R^d\setminus \{0\}
\end{equation*}
for $m=0$ (\emph{fractional Laplacian}). Here
\begin{equation*}
\label{bessel3}
K_\rho (z) = \frac{1}{2} \left(\frac{z}{2}\right)^\rho \int_0^\infty t^{-\rho - 1} e^{-t-\frac{z^2}{4t}} dt, \quad z > 0,
\; \rho > -\frac{1}{2}.
\end{equation*}
is the standard modified Bessel function of the third kind. The operator $L_{m,\alpha}$ is positive, and self-adjoint with
core $C^\infty_{\rm c}(\R^d)$, for every $0< \alpha < 2$ and $m \geq 0$.


The difference of the massive and massless operators is bounded, and the relationship can be made explicit, which
will be useful below. For $m, r >0$ denote
\begin{align*}
\label{sigma}
\begin{split}
\sigma_{m,\alpha}(r)
&=
\frac{\alpha 2^{1-\frac{d-\alpha}{2}}}{\Gamma\left(1-\frac{\alpha}{2}\right)
\pi^{\frac{d}{2}}} \left(\frac{2^{\frac{d+\alpha}{2}-1}\Gamma\left(\frac{d+\alpha}{2}\right)}{r^{d+\alpha}}-
\frac{m^{\frac{d+\alpha}{2\alpha}}K_{\frac{d+\alpha}{2}}\left(m^{1/\alpha}r\right)}{r^{\frac{d+\alpha}{2}}}\right)
\\&=
\frac{\alpha 2^{1-\frac{d-\alpha}{2}}}{\Gamma\left(1-\frac{\alpha}{2}\right)\pi^{\frac{d}{2}}}
{\frac{1}{r^{d+\alpha}}}
\int_0^{m^{1/\alpha} r}w^{\frac{d+\alpha}{2}}K_{\frac{d+\alpha}{2}-1}(w)dw,
\end{split}
\end{align*}
and define the measure
\begin{equation*}
\Sigma_{m,\alpha}(A)=\int_{A}\sigma_{m,\alpha}(|x|)dx,
\end{equation*}
for all Borel sets $A\subset \R^d$. It can be shown that $\Sigma_{m,\alpha}$ is finite, positive and has full mass
$\Sigma_{m,\alpha}(\R^d)=m$. For every function $f \in L^\infty(\R^d)$ consider the operator
\begin{equation*}
G_{m,\alpha}f(x)=\frac{1}{2}\int_{\R^d}(f(x+h)-2f(x)+f(x-h))\sigma_{m,\alpha}(|h|)dh.
\end{equation*}
which is well-defined and
$\Norm{G_{m,\alpha}f}{\infty}\le 2m\Norm{f}{\infty}$
holds. Then the decomposition
\begin{equation}\label{dec}
j_{0,\alpha}(r)=j_{m,\alpha}(r)+\sigma_{m,\alpha}(r)
\end{equation}
holds, which implies the formula
\begin{equation*}
\label{withG}
L_{m,\alpha}f=L_{0,\alpha}f-G_{m,\alpha}f,
\end{equation*}
for every function $f$ belonging to the domain of $L_{m,\alpha}$.
For the details and proofs we refer to \cite[Sect. 2.3.2]{AL}, see also \cite[Lem. 2]{R02}.

Next consider the multiplication operator $V: \R^d \to \R$ on $L^2(\R^d)$, which plays the role of the potential.
In case $V = -v\mathbf{1}_\cK$ with a bounded set $\cK \subset \Rd$ having a non-empty interior, we say that $V$
is a potential well with coupling constant $v > 0$. Since such a potential is relatively bounded with respect to
$L_{m,\alpha}$, the operator
\begin{equation}
\label{relsch}
H_{m,\alpha} = L_{m,\alpha} -v{\mathbf 1}_\cK
\end{equation}
can be defined by standard perturbation theory as a self-adjoint operator with core $C_{\rm c}^\infty(\R^d)$.
For simplicity, we call $H_{m,\alpha}$ the (massive or massless) \emph{relativistic Schr\"odinger operator} with
potential well supported in $\cK$, no matter the value of $\alpha \in (0,2)$.

Below we will use the following notations. For two functions $f,g:\R^d \to \R$ we write $f(x)\asymp g(x)$ if there
exists a constant $C \ge 1$ such that $(1/C)g(x)\le f(x)\le Cg(x)$. We denote $f(x)\sim g(x)$ as $|x| \to \infty$
(resp. if $|x| \downarrow 0$) if $\lim_{|x| \to \infty}\frac{f(x)}{g(x)}=1$ (resp. if $\lim_{|x| \downarrow 0}
\frac{f(x)}{g(x)}=1$). Finally, we denote $f(x)\approx g(x)$ as $|x| \to \infty$ (analogously for $|x| \downarrow 0$)
if there exists a constant $C \ge 1$ such that
$(1/C)\le \liminf_{|x| \to \infty} f(x)/g(x)\le \limsup_{|x| \to \infty} f(x)/g(x) \le C$.
Also, we will use the notation $\cB_r(x)$ for a ball of radius $r$ centered in $x\in \R^d$, write just $\cB_r$
when $x=0$, and $\omega_d = |\cB_1|$ for the volume of a $d$-dimensional unit ball.
Moreover,
for a domain $\cD \subset \R^d$ we write $\cD^c$ to denote $\R^d \setminus \overline{\cD}$. In proofs we number
the constants in order to be able to track them, but the counters will be reset in a subsequent statement and proof.
Also, in the statements to follow, we will use the default assumptions $0<\alpha<2$ and $m\geq 0$ implicitly, unless
specified otherwise.

\subsection{Feynman-Kac representation and the related random processes}
The operators $-L_{m,\alpha}$ are Markov generators and give rise to the following L\'evy processes, which can be
realised on the space of c\`adl\`ag paths (i.e., the space of functions that are continuous from the right with
left limits), indexed by the positive semi-axis. To ease the notation, we denote these processes by $\pro X$
without subscripts, and it will be clear from the context which process it refers to. Also, we denote by $\mathbb P^x$
the probability measure on the space of c\`adl\`ag paths, induced by
the process $\pro X$ starting from $x\in \R^d$, by $\ex^x$ expectation with respect to
$\mathbb P^x$, and simplify the notations to $\mathbb P$ and $\ex$ when $x=0$. We will also use the notation
$\ex^x[f(X_t) ; \,\mbox{\small conditions}]$ to mean $\ex^x[f(X_t) \textbf{1}_{\{\mbox{\tiny conditions}\}}]$.

If $m>0$, the operator $-L_{m,\alpha}$ generates a rotationally invariant relativistic $\alpha$-stable process $\pro X$,
and if $m=0$, the operator $-L_{0,\alpha}$ generates a rotationally invariant $\alpha$-stable process $\pro X$. Thus in
either case
\begin{equation*}
P_tf(x):= \left(e^{-tL_{m,\alpha}}f\right)(x) = \ex^x[f(X_t)], \quad x \in \R^d, \, t \geq 0, \, f \in L^2(\R^d),
\end{equation*}
holds, giving rise to the Markov semigroup $\semi P$. Each $P_t$, $t > 0$, is an integral operator with translation invariant
integral kernel $p(t,x,y) := p_t(x-y)$, i.e., $P_tf(x)=\int_{\R^d}p_t(x-y)f(y)dy$ for all $f \in L^p(\R^d)$, $1 \le p \le
\infty$. Also,
\begin{equation*}
\ex[e^{iu\cdot X_t}] = e^{t\Phi_{m,\alpha}(|u|^2)}, \quad u \in \R^d, \, m \geq 0,
\end{equation*}
so that $\Phi_{m,\alpha}(|u|^2)=(|u|^2+m^{2/\alpha})^{\alpha/2}-m$, $m>0$, gives the characteristic exponent of the rotationally
invariant relativistic $\alpha$-stable process, which has the L\'evy jump measure $\nu_{m,\alpha}(dx)$, and $\Phi_{0,\alpha}
(|u|^2)= |u|^\alpha$ gives the characteristic exponent of the rotationally invariant $\alpha$-stable process, which has the
L\'evy jump measure $\nu_{0,\alpha}(dx)$. From a straightforward analysis it can be seen that for small $|x|$ the L\'evy
intensity $j_{m,\alpha}(x)$ behaves like $j_{0,\alpha}(x)$, but due to $K_\rho (x) \sim C|x|^{-1/2}e^{-|x|}$ as $|x| \to
\infty$ for a suitable constant $C>0$, it decays exponentially, while $j_{0,\alpha}(x)$ is polynomial. This difference in
the behaviours has a strong impact on the properties of the two processes.

The main object of interest in this paper are the ground states $\varphi_0$ of the operators $H_{m,\alpha}$ as given by
\eqref{relsch}, i.e., non-zero solutions of the eigenvalue equation
\begin{equation*}
H_{m,\alpha}\varphi_0 = \lambda_0 \varphi_0
\end{equation*}
corresponding to the lowest eigenvalue, so that $\varphi_0 \in \Dom(H_{m,\alpha})\setminus\{0\}$ and $\lambda_0 =
\inf\Spec H_{m,\alpha}$, whenever they exist. Since the potentials $V = -v\textbf{1}_\cK$ are relatively compact perturbations
of $H_{m,\alpha}$, the essential spectrum is preserved, and thus $\Spec H_{m,\alpha} = \Spec_{\rm ess} H_{m,\alpha} \cup
\Spec_{\rm d}H_{m,\alpha}$, with $\Spec_{\rm ess} H_{m,\alpha} = \Spec_{\rm ess} L_{m,\alpha} = [0 ,\infty)$. The existence of
a discrete component depends on further details of the potential. Generally, $\Spec_{\rm d} H_{m,\alpha} \subset (-v, 0)$,
and $\Spec_{\rm d} H_{m,\alpha}$ consists of a finite set of isolated eigenvalues of finite multiplicity each.

For non-positive compactly supported potentials it is known that $\Spec_{\rm d} H_{m,\alpha} \neq \emptyset$ if $\pro X$
is a recurrent process \cite{CMS}, \cite[Th. 4.308]{LHB}, i.e., $H_{m,\alpha}$, $m \geq 0$, does have a ground state
$\varphi_0$ in every such case for all $v > 0$. Recall the Chung-Fuchs criterion of recurrence, which says that for
a process with characteristic exponent $\Psi$ the condition $\int_{|u|<r} \frac{du}{\Psi(u)} < \infty$ for some $r>0$, is
equivalent with the transience of the process  \cite[Cor. 37.17]{Sat}, \cite[Th. 3.84]{LHB}. An application to the processes
above gives that the relativistic $\alpha$-stable process is recurrent whenever $d=1$ or 2, and transient for $d \geq 3$,
while the $\alpha$-stable process is recurrent in case $d=1$ and $\alpha \geq 1$, and transient otherwise. In the transient
cases, \cite[Prop. 2.7]{AL3} guarantees that for sufficiently large $v$ (for instance, $v>\lambda_{\cK}$, where $\lambda_{\cK}$ 
is the principal Dirichlet eigenvalue of $L_{m,\alpha}$ over the well $\cK$) a ground state exists. Furthermore, by 
\cite[Lem. 4.5]{AL3} we know that $v+\lambda_0<\lambda_{\cK}$.

Whenever a ground state $\varphi_0$ of the operator $H_{m,\alpha}$ exists, a Feynman-Kac type representation
\begin{equation}
\label{FKPW}
e^{-tH_{m,\alpha}}\varphi_0(x) = e^{\lambda_0 t}\ex^x[e^{-\int_0^t V(X_s)ds}\varphi_0(X_t)] =
e^{\lambda_0 t}\ex^x[e^{vU_t^\cK(X)}\varphi_0(X_t)],  \quad x \in \Rd, \, t \geq 0
\end{equation}
holds, where
\begin{equation*}
U_t^\cK(X) = \int_0^t {\mathbf 1}_\cK(X_s) ds
\end{equation*}
is the occupation measure of the set $\cK$ by $\pro X$. For the details and proofs we refer to \cite[Sect. 4.6]{LHB}.
For the non-local Schr\"odinger operators $H_{m,\alpha}$ the semigroup $\semi T$, $T_t = e^{-tH_{m,\alpha}}$,
is well-defined and strongly continuous. For all $t>0$, every $T_t$ is a bounded operator on every $L^p(\Rd)$ space,
$1 \leq p \leq \infty$. By \cite[Prop. 4.291]{LHB} the operators $T_t: L^p(\Rd) \to L^p(\Rd)$ for $1 \leq p \leq \infty$,
$T_t: L^p(\Rd) \to L^{\infty}(\Rd)$ for $1 < p \leq \infty$, and $T_t: L^1(\Rd) \to L^{\infty}(\Rd)$ are bounded, for all
$t > 0$. Also, $T_t$ has a bounded measurable integral kernel $q(t, x, y)$ for all $t>0$, i.e., $T_t f(x) = \int_{\Rd}
q(t,x,y)f(y)dy$, for all $f \in L^p(\Rd)$, $1 \leq p \leq \infty$.

Again by \cite[Prop. 4.291]{LHB}, for all $t>0$ and $f \in L^{\infty}(\Rd)$, $T_t f$ is a bounded continuous function.
Thus all the eigenfunctions of $H_{m,\alpha}$ are bounded and continuous, whenever they exist. Also, they have a pointwise
decay to zero at infinity, and the asymptotic behaviour
\begin{equation*}
\label{asympto}
\varphi_0(x) \approx j_{m,\alpha}(x)
\; \left\{
\begin{array}{lll}
= \, {\mathcal A}_{d,\alpha} |x|^{-d-\alpha}          & \mbox{for \, $m = 0$}   \vspace{0.2cm} \\
\approx \, |x|^{-(d+\alpha+1)/2}e^{-m^{1/\alpha} |x|}  & \mbox{for \, $m > 0$}
\end{array}\right.
\end{equation*}
holds, with ${\mathcal A}_{d,\alpha}=\frac{2^\alpha \Gamma(\frac{d+\alpha}{2})}{\pi^{d/2}|\Gamma(-\frac{\alpha}{2})|}$.
For further details we refer to \cite{KL17}. Furthermore, it can be shown that if a ground state exists $\varphi_0$ for
$H_{m,\alpha}$, then due to the positivity improving property of the Feynman-Kac semigroup $\varphi_0$ is unique and
has a strictly positive version, which we will choose throughout this paper. For details we refer to
\cite[Sects. 4.3.2, 4.9.1]{LHB}.

\subsection{Heat kernel of the killed Feynman-Kac semigroup}
Let $\cD \subset \R^d$ be an open
 set and consider the first exit time
\begin{equation}
\label{firstex}
\tau_\cD = \inf\left\{t > 0: X_t \notin \cD\right\}
\end{equation}
from $\cD$. When $\cD = \cB_R$ we simplify the notation to $\tau_R$, while if $\cD=\cB_R^c$ we use $T_R$. (From the
context the reader will realise the meanings and not confuse this simple notation with the semigroup operators $T_t$.)
The transition probability densities $p_\cD(t,x,y)$ of the process killed on exiting $\cD$ (or heat kernel of the killed
semigroup) are given by the Dynkin-Hunt formula
\begin{align}
\label{Hunt}
p_\cD(t,x,y)= p_t(x-y) - \ex^x\left[p_{t-\tau_\cD}(y-X_{\tau_\cD}); \tau_\cD < t\right], \quad x , y \in \cD.
\end{align}
The heat kernel $p_\cD(t,x,y)$ gives rise to the killed Feynman-Kac semigroup $\semi
{P^{\cD}}$ by $P_t^{\cD} f(x) = \int_\cD p_\cD(t,x,y)f(y)dy$, for all $x\in\cD$, $t>0$ and $f\in L^2(\R^d)$. It is known
that $\semi {P^{\cD}}$ is a strongly continuous semigroup of contraction operators on $L^2(\cD)$ and every operator
$P_t^{\cD}$, $t>0$, is self-adjoint.

Below we will make frequent use of the Ikeda-Watanabe formula \cite[Th. 1]{IW}, which says that for every $\eta>0$ and
every bounded or non-negative Borel function $f$ on $\R^d$, the equality
\begin{align*}
\ex^x\left[e^{-\eta \tau_\cD} f(X_{\tau_\cD})\right] = \int_\cD \int_0^{\infty} e^{-\eta t} p_\cD(t,x,y) dt
\int_{\cD^c} f(z) j_{m,\alpha}(z-y) dzdy, \quad x \in \cD,
\end{align*}
holds. The same arguments leading to the above expression also allow the more general formulation (see, for instance,
\cite[eq. (1.58)]{BBKRSV09} and \cite[Th. 2]{IW})
\begin{align}
\label{IW}
\ex^x\left[f(\tau_{\cD}, X_{\tau_\cD-}, X_{\tau_\cD})\right] = \int_\cD
\int_{\cD^c} \int_0^{\infty} p_\cD(t,x,y) f(t,y,z) j_{m,\alpha}(z-y) dtdzdy, \quad x \in \cD,
\end{align}
which holds for every bounded or non-negative Borel function $f:[0,\infty] \times \R^d \times \R^d \to \R$. We will keep
referring to this as the Ikeda-Watanabe formula.

In what follows we will also rely on some estimates of the heat kernel of the killed semigroup. By \eqref{Hunt}, clearly
$p_\cD(t,x,y) \leq p_t(x-y)$ for all $t > 0$ and $x,y \in \cD$. Recall that the semigroup $\semi {P^{\cD}}$ is said to be
intrinsically ultracontractive (IUC) whenever there exists $C_t^\cD > 0$ such that
$p_\cD(t,x,y) \leq C_t^\cD f_\cD(x) f_\cD(y)$, for all $t > 0$ and $x,y \in \cD$, where $f_\cD$ is the principal Dirichlet
eigenfunction of the operator $L_{m,\alpha}$ in the domain $\cD$. It can be shown that if $\semi {P^{\cD}}$ is IUC, then a
similar lower bound holds with another constant.
The following result provides a bound on $p_t(x)$, and will be useful for the IUC property of $\semi {P^{\cD}}$ for a
class of domains $\cD$ that we will use below.
\begin{lemma}
For every $\delta>0$ there exists a constant $C_{d,m,\alpha}(\delta)$ such that
\begin{equation*}
\sup_{|x| \ge \delta, \ t>0}p_t(x)\le C_{d,m,\alpha}(\delta).
\end{equation*}
\end{lemma}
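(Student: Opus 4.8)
The plan is to obtain the bound directly from the radial monotonicity (unimodality) of the free transition densities $p_t$, rather than from heat kernel asymptotics. Recall that for every $m\geq 0$ the function $z\mapsto\Phi_{m,\alpha}(z)=(z+m^{2/\alpha})^{\alpha/2}-m$ is a Bernstein function vanishing at the origin, so that $e^{-tL_{m,\alpha}}=e^{-t\Phi_{m,\alpha}(-\Delta)}$ is a subordinate heat semigroup: there is a subordinator $\pro S$ with $\ex[e^{-\lambda S_t}]=e^{-t\Phi_{m,\alpha}(\lambda)}$, whose one-dimensional distributions I denote by $\mu_t$, and
\[
p_t(x)=\int_0^\infty (4\pi s)^{-d/2}\,e^{-|x|^2/(4s)}\,\mu_t(ds),\qquad x\in\R^d,\ t>0.
\]
Being a mixture of centred Gaussians with non-negative weights, $x\mapsto p_t(x)$ is radially non-increasing for each $t>0$; that is, $p_t(x)\le p_t(y)$ whenever $|x|\ge|y|$. (For $m=0$ this is just the classical unimodality of symmetric stable densities.)

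With this at hand the lemma is immediate and the constant becomes explicit. Fix $\delta>0$ and let $|x|\ge\delta$. For every $y\in\cB_\delta$ we have $|y|<\delta\le|x|$, hence $p_t(x)\le p_t(y)$; integrating this inequality over $y\in\cB_\delta$ and using that $p_t$ is a probability density, $\int_{\R^d}p_t(y)\,dy=1$, we get
\[
\omega_d\delta^d\, p_t(x)=|\cB_\delta|\,p_t(x)\le\int_{\cB_\delta}p_t(y)\,dy\le 1,
\]
so that $p_t(x)\le(\omega_d\delta^d)^{-1}$ for all $t>0$ and all $|x|\ge\delta$. Thus the statement holds with $C_{d,m,\alpha}(\delta)=(\omega_d\delta^d)^{-1}$, which in fact does not depend on $m$ or $\alpha$.

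The only delicate input is the unimodality of $p_t$, which I would establish through the subordination representation above. Alternatively, one can extract a (less sharp, $m$- and $\alpha$-dependent) bound from the known sharp heat kernel estimates of the (relativistic) $\alpha$-stable process by distinguishing two regimes: for $t\le\delta^\alpha$ one uses that $p_t(x)$ is bounded by a constant times $t\,j_{m,\alpha}(|x|)\le\delta^\alpha j_{m,\alpha}(\delta)$ when $|x|\ge\delta$, since $r\mapsto j_{m,\alpha}(r)$ is non-increasing; and for $t\ge\delta^\alpha$ one uses that $p_t(x)\le p_t(0)=\int_{\R^d}p_{t/2}(y)^2\,dy$ is non-increasing in $t$, hence bounded by $p_{\delta^\alpha}(0)<\infty$. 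I expect the subordination route to be the cleanest, and it is the one that makes the comparability constant completely explicit.
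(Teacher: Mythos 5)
Your argument is correct, and it takes a genuinely different route from the paper. The paper's proof is assembled from known heat kernel bounds for the (relativistic) stable process: it splits into $t\le 1$, where the bound $p_t(x)\le C e^{mt}\, t\, |x|^{-d-\alpha}$ from Ryznar is used, and $t\ge 1$, where the on-diagonal decay is invoked separately for $m=0$ (namely $p_t(x)\lesssim t^{-d/\alpha}$) and for $m>0$ (namely $p_t(x)\lesssim m^{d/\alpha-d/2}t^{-d/2}+t^{-d/\alpha}$), and then takes a maximum of the resulting constants; this is essentially your second, sketched alternative. Your main argument instead uses only two soft facts: that $\Phi_{m,\alpha}$ is a Bernstein function with $\Phi_{m,\alpha}(0)=0$, so that by subordination $p_t$ is a mixture of centred Gaussians and hence radially non-increasing (isotropic unimodal), and that $p_t$ integrates to one. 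Comparing $p_t(x)$ for $|x|\ge\delta$ with its values on $\cB_\delta$ and integrating then gives $p_t(x)\le (\omega_d\delta^d)^{-1}$. This buys an explicit constant that is independent of $m$ and $\alpha$ (which could even sharpen later uniformity considerations, cf. Remark \ref{remcontr1}), and it generalizes verbatim to any conservative isotropic unimodal Lévy process, whereas the paper's route leans on the specific kernel estimates for these two processes but requires no structural input beyond the cited bounds. The only point to state explicitly in a polished write-up is why the mixing measure $\mu_t$ carries no atom at the origin (the subordinator has infinite L\'evy measure, or simply note that an atom at $0$ would only contribute on the null set $\{x=0\}$, which is irrelevant for $|x|\ge\delta$); with that remark, the proof is complete.
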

\begin{proof}
Fix $\delta>0$. By \cite[eq. (9)]{R02} we know that
\begin{equation*}
p_t(x)\le C^{(1)}_\alpha e^{mt}t\frac{2^{\alpha}\Gamma\left(\frac{d+\alpha}{2}\right)}{\pi^{d/2} |x|^{d+\alpha}}.
\end{equation*}
Thus for $t \le 1$ and $|x|\ge \delta$ we obtain
\begin{equation*}
p_t(x)\le C^{(1)}_\alpha e^{m}\frac{2^{\alpha}\Gamma\left(\frac{d+\alpha}{2}\right)}
{\pi^{d/2}\delta^{d+\alpha}}=:C^{(2)}_{d,m,\alpha}(\delta).
\end{equation*}
For $t \ge 1$ we distinguish two cases. If $m=0$, we use the estimate (see, for instance, \cite{BBKRSV09})
\begin{equation*}
	p_t(x)\le C^{(3)}_{d,\alpha} t^{-\frac{d}{\alpha}}\le C^{(3)}_{d,\alpha}, \quad t>1.
\end{equation*}
If $m>0$, we can use \cite[Lem. 3]{R02} to conclude that
\begin{equation*}
p_t(x)\le C^{(4)}_{d,m,\alpha} \left(m^{\frac{d}{\alpha}-\frac{d}{2}}{t^{-\frac{d}{2}}}+
t^{-\frac{d}{\alpha}}\right) \le C^{(5)}_{d,m,\alpha}, \quad t>1.
\end{equation*}
Hence we can define
\begin{equation*}
C_{d,m,\alpha}(\delta)=\begin{cases}
\max\big\{C^{(2)}_{d,0,\alpha}(\delta), C^{(3)}_{d,\alpha}\big\} & \; m=0 \vspace{0.2cm} \\
\max\big\{C^{(2)}_{d,m,\alpha}(\delta), C^{(4)}_{d,m,\alpha}\big\} & \; m>0, \\
\end{cases}
\end{equation*}
giving $\sup_{|x|\ge \delta, \ t>0}p_t(x)\le C_{d,m,\alpha}(\delta)$ for every $m \ge 0$.
\end{proof}
Using that $\nu_{m,\alpha}(\cB_r(x))>0$ for every $x \in \R^d$, $r>0$ and $m \ge 0$, we immediately get the
following result from the previous lemma and \cite[Th. 3.1]{G08}.
\begin{corollary}
\label{lem:ultra}
Let $\cD$ be a bounded Lipschitz domain. The killed semigroup $\semi {P^{\cD}}$ is IUC.
\end{corollary}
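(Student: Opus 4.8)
The plan is to reduce the statement to the intrinsic ultracontractivity criterion of \cite[Th. 3.1]{G08} by checking its two structural hypotheses. That criterion applies to a symmetric pure-jump L\'evy process whose transition density $p_t(x-y)$ exists, whose L\'evy measure charges every ball, and for which $p_t$ is, for each fixed $\delta>0$, uniformly bounded on $\{|x|\ge \delta\}$ (uniformly in $t>0$); under these conditions the semigroup killed on exiting any bounded open set $\cD$ is IUC, that is, $p_\cD(t,x,y)\le C^\cD_t f_\cD(x)f_\cD(y)$ for all $t>0$ and $x,y\in\cD$, the matching lower bound then being automatic as recalled above. Both relevant processes --- the $\alpha$-stable process for $m=0$ and the rotationally invariant relativistic $\alpha$-stable process for $m>0$ --- are symmetric pure-jump L\'evy processes with a transition density $p_t(x-y)$, so only the two conditions on $p_t$ and on $\nu_{m,\alpha}$ remain to be verified.

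The uniform bound on $p_t$ away from the origin is exactly the content of the previous lemma: $\sup_{|x|\ge\delta,\ t>0}p_t(x)\le C_{d,m,\alpha}(\delta)<\infty$ for every $\delta>0$. The positivity of $\nu_{m,\alpha}$ on balls is read off from the explicit densities in Section 2.1. For $m=0$ we have $j_{0,\alpha}(|x|)=\mathcal{A}_{d,\alpha}|x|^{-d-\alpha}>0$ for all $x\ne 0$. For $m>0$ the density is a strictly positive constant times $K_{(d+\alpha)/2}(m^{1/\alpha}|x|)\,|x|^{-(d+\alpha)/2}$, and the modified Bessel function $K_\rho$ is strictly positive on $(0,\infty)$ (immediate from the integral representation of $K_\rho$ recorded above). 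Hence $j_{m,\alpha}(|x|)>0$ for every $x\ne 0$ and $m\ge 0$, so $\nu_{m,\alpha}(\cB_r(x))=\int_{\cB_r(x)}j_{m,\alpha}(|z|)\,dz>0$ for all $x\in\R^d$ and $r>0$, which is the remark stated just before the corollary.

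With both hypotheses in place, \cite[Th. 3.1]{G08} applies to the bounded Lipschitz domain $\cD$ and gives that $\semi {P^\cD}$ is IUC; the Lipschitz regularity enters only through the hypotheses of the cited theorem (boundedness suffices for the IUC conclusion itself, while the regularity of $\cD$ guarantees that the principal Dirichlet eigenfunction $f_\cD$ of $L_{m,\alpha}$ on $\cD$ appearing in the definition is well defined and strictly positive on $\cD$). The only point that needs a moment's care is to confirm that the relativistic case $m>0$ genuinely falls within the scope of \cite[Th. 3.1]{G08}: this is so precisely because the relativistic $\alpha$-stable process is itself a symmetric L\'evy process whose jump density satisfies the very same two conditions verified above, so no separate argument is required for it.
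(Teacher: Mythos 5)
Your proposal is correct and follows essentially the same route as the paper: verifying positivity of $\nu_{m,\alpha}$ on balls and the uniform bound $\sup_{|x|\ge\delta,\,t>0}p_t(x)<\infty$ from the preceding lemma, then invoking \cite[Th. 3.1]{G08}. The extra details you supply (positivity of $K_\rho$, the role of the Lipschitz regularity) are consistent with, and merely elaborate on, the paper's one-line argument.
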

\noindent

We will denote the principal Dirichlet eigenfunction of $L_{m,\alpha}$ by $f_R$ at eigenvalue $\lambda_R$ whenever
$\cD=\cB_R$. Using IUC and its implication of a similar lower bound, and the continuity of the killed heat kernel,
it can be shown \cite[Th. 4.2.5]{D89} that there exists a large enough $T>0$ such that
\begin{equation}
\label{abovebelow}
\frac{1}{2} \, e^{-\lambda_R t}f_R(x)f_R(y)\le p_{\cB_R}(t,x,y)\le \frac{3}{2}\, e^{-\lambda_R t}f_R(x)f_R(y),
\end{equation}
for all $t > T$ and $x,y \in \cB_R$.

\section{Exit and hitting times estimates}
\subsection{Estimates on the survival probability}
As we will see below, the local behaviour of ground states depends on a function which can be estimated by
using tools of potential theory for the stable and relativistic stable processes. We will denote this by
$\cV_{\alpha,m}$ and call it rate function. In this section we derive some key information on this function
first. The results contained in this subsection have been obtained in a more general context in \cite{BGR2015}.
Since here we are considering two specific cases, which are widely used in applications, we reconsider some of
the proofs in order to identify the values of the involved constants, which are not explicit in the cited work
due to the greater generality of the arguments involved.

\begin{lemma}\label{lem:meanexit}
Let $\cD$ be a $C^{1,1}$ bounded open set in $\R^d$, $\pro {X^{(0)}}$ be an isotropic $\alpha$-stable process and
$\pro {X^{(m)}}$ be an isotropic relativistic $\alpha$-stable process with mass $m>0$. Consider the first exit time
$\tau^{(m)}_{\cD}=\inf\{t>0: X_t^{(m)}\not \in \cD\}, \ m \ge 0$.
Then $\E^x[\tau^{(m)}_{\cD}]\asymp \E^x[\tau^{(0)}_{\cD}]$, for every $m>0$ and the comparability constant is independent
of $\cD$.
\end{lemma}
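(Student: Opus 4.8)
The plan is to exploit the decomposition $L_{m,\alpha} = L_{0,\alpha} - G_{m,\alpha}$ recalled in \eqref{dec} and the surrounding discussion, together with the known two-sided estimate for $\E^x[\tau^{(0)}_\cD]$ in $C^{1,1}$ domains. Recall that for the isotropic $\alpha$-stable process one has the sharp bound $\E^x[\tau^{(0)}_\cD] \asymp \dist(x,\partial\cD)^{\alpha/2}\diam(\cD)^{\alpha/2}$ (equivalently $\asymp \delta_\cD(x)^{\alpha/2}$ up to a scale-dependent constant), with comparability constant depending only on $d,\alpha$ and the $C^{1,1}$ characteristics of $\cD$. So it suffices to prove the two-sided comparison $\E^x[\tau^{(m)}_\cD] \asymp \E^x[\tau^{(0)}_\cD]$ with a constant independent of $\cD$; the claimed form in terms of $\delta_\cD$ is then automatic.

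The analytic route I would take is via the torsion-type functions. Set $u_m(x) = \E^x[\tau^{(m)}_\cD]$, which solves $L_{m,\alpha}u_m = 1$ in $\cD$ with $u_m = 0$ on $\cD^c$, and similarly $u_0(x) = \E^x[\tau^{(0)}_\cD]$ solving $L_{0,\alpha}u_0 = 1$. Using \eqref{dec}, $L_{0,\alpha}u_m = 1 + G_{m,\alpha}u_m$ in $\cD$. Since $\|G_{m,\alpha}f\|_\infty \le 2m\|f\|_\infty$ and $u_m$ is bounded (indeed $\|u_m\|_\infty \le \|u_0\|_\infty$ because adding the nonnegative operator $G_{m,\alpha}$, i.e. the extra killing/subtraction structure, only shortens exit times — this monotonicity should be argued probabilistically by comparing the two processes, or by a maximum principle), we get $\|L_{0,\alpha}(u_0 - u_m)\|_\infty \le 2m\|u_m\|_\infty$. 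Applying the Green operator $G^{\cB_R}_{0,\alpha}$ of the stable process on a fixed large ball $\cB_R \supset \cD$ (after extending by $0$) and using its uniform $L^\infty$ bound, one controls $u_0 - u_m$ — but this only gives an additive error of order $m$, not the multiplicative comparison. To upgrade to a genuine two-sided $\asymp$ with $\cD$-independent constant, I would instead iterate: write $u_m = u_0 - G^\cD_{0,\alpha}(G_{m,\alpha}u_m)$ using the \emph{killed} Green operator on $\cD$, and estimate $\|G^\cD_{0,\alpha}(G_{m,\alpha}u_m)\|_\infty$ pointwise against $u_0(x)$ itself. The key fact needed here is the comparison $G^\cD_{0,\alpha}(\mathbf 1)(x) = \E^x[\tau^{(0)}_\cD] = u_0(x)$, so $G^\cD_{0,\alpha}(G_{m,\alpha}u_m)(x) \le 2m\|u_m\|_\infty \, u_0(x)$ — but again this is not small unless $m$ is small or $\cD$ small.

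Because of this, the cleanest strategy is the purely probabilistic coupling argument, which is the route I would actually commit to. The relativistic $\alpha$-stable process can be obtained from the stable one by a change of Lévy measure (adding the negative part $-\sigma_{m,\alpha}$, which \emph{removes} mass from large jumps) together with a multiplicative functional, or more directly: by subordination, the relativistic process is $B_{S_t}$ where $S_t$ is a subordinator whose Laplace exponent $\Phi_{m,\alpha}$ satisfies $\Phi_{0,\alpha}(z) = z^{\alpha/2} \asymp \Phi_{m,\alpha}(z)$ for \emph{all} $z>0$ with constants depending only on $\alpha$ when $z$ is small and, crucially, $\Phi_{m,\alpha}(z) \le \Phi_{0,\alpha}(z)$ globally while $\Phi_{m,\alpha}(z) \asymp \Phi_{0,\alpha}(z)$ for $z \ge 1$. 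This means the two subordinators $S^{(m)}, S^{(0)}$ can be compared: their inverse local times, equivalently the time-changes, are comparable up to a universal constant on the relevant range, and hence the two Bessel-subordinated processes exit any $C^{1,1}$ domain on comparable time-scales. Concretely, I would cite or reprove the following from the structure of \cite{BGR2015} specialized here: the mean exit time of a subordinate Brownian motion with Laplace exponent $\psi$ from a $C^{1,1}$ set satisfies $\E^x[\tau_\cD] \asymp V_\psi(\delta_\cD(x))\, V_\psi(\diam\cD)$ where $V_\psi$ is built from $\psi$, and $V_{\Phi_{m,\alpha}} \asymp V_{\Phi_{0,\alpha}}$ uniformly because the two exponents are globally comparable on $[0,1]$ (small frequencies, which control long-range/global geometry) and both behave like $z^{\alpha/2}$ near infinity (small scales). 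Since the comparability of $V_{\Phi_{m,\alpha}}$ and $V_{\Phi_{0,\alpha}}$ depends only on $d,\alpha$ and the mass $m$ through a scale-invariant ratio — and in fact one can rescale to fix $m=1$ — the resulting comparability constant for the exit times does not depend on $\cD$, which is exactly the assertion.

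The main obstacle is making the $\cD$-independence of the constant airtight. Both global and local regimes of the Laplace exponent matter: the small-$z$ behaviour of $\Phi_{m,\alpha}$ governs the exit time from large domains, and there $\Phi_{m,\alpha}(z) = (z + m^{2/\alpha})^{\alpha/2} - m \asymp_\alpha z$ is \emph{not} comparable to $z^{\alpha/2}$ — it is comparable to $\Phi_{0,\alpha}(z)=z^{\alpha/2}$ only for $z \gtrsim m^{2/\alpha}$. So naively the constant could blow up as $\diam\cD \to \infty$. The resolution — and the technical heart of the argument — is that $\E^x[\tau_\cD]$ for the massive process actually grows \emph{faster} (the process is "heavier," moves less) for large $\cD$, but the sharp estimate $\E^x[\tau^{(m)}_\cD] \asymp V_{\Phi_{m,\alpha}}(\delta_\cD(x)) V_{\Phi_{m,\alpha}}(\diam\cD)$ still holds, and one must check that the \emph{ratio} $\big(V_{\Phi_{m,\alpha}}(\delta)V_{\Phi_{m,\alpha}}(R)\big)/\big(V_{\Phi_{0,\alpha}}(\delta)V_{\Phi_{0,\alpha}}(R)\big)$ stays bounded above and below uniformly in $0<\delta\le R$. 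I expect this reduces to a one-variable estimate on $V_{\Phi_{m,\alpha}}(r)/r^{\alpha/2}$ being bounded between two positive constants for all $r>0$ (for fixed $m$, or after scaling), which follows from the explicit two-regime asymptotics of $V_{\Phi_{m,\alpha}}$ already implicit in \eqref{renew}: $V_{\Phi_{m,\alpha}}(r) \asymp r^{\alpha/2}$ for $r \le 1$ and $V_{\Phi_{m,\alpha}}(r)\asymp r$ for $r \ge 1$ — wait, this is exactly where care is needed, since $r$ vs $r^{\alpha/2}$ are not comparable for large $r$. The correct statement, which I would verify and then use, is that one should compare the massive process not to the stable process on all scales but that the \emph{mean exit time ratio} nonetheless stays bounded because in both the product $V(\delta)V(R)$ the same power switch happens; carrying this through carefully, tracking the constants as the paper promises, is the one genuinely non-routine computation.
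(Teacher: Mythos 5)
Your argument does not close, and the step you defer at the end is not merely ``non-routine'' --- in the uniform-in-$\cD$ form you state it, it is false. The perturbative route via $L_{m,\alpha}=L_{0,\alpha}-G_{m,\alpha}$ you abandon yourself, correctly, since it only produces an additive error of order $m$ times sup-norms. The subordination route you then commit to rests on comparability of the relevant $V$-functions (equivalently, of the Laplace exponents) on all scales, and this fails at small frequencies: $\Phi_{m,\alpha}(z)\sim \tfrac{\alpha}{2}m^{1-2/\alpha}\,z$ as $z\downarrow 0$, so $V_{\Phi_{m,\alpha}}(r)\asymp r$ for $r$ large while $V_{\Phi_{0,\alpha}}(r)=r^{\alpha/2}$ (your intermediate claim that the two exponents are ``globally comparable on $[0,1]$'' is already incorrect). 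You notice the problem, but the proposed rescue --- that in the products $V(\delta_\cD(x))\,V(\diam\cD)$ ``the same power switch happens'' so the ratio of mean exit times stays bounded --- does not hold: for $\cD=\cB_R$ with $R$ much larger than $m^{-1/\alpha}$ and $x=0$, the massive process is diffusive at that scale (finite variance, characteristic exponent comparable to $|u|^2$ near the origin), so $\E^0[\tau^{(m)}_{\cB_R}]$ grows like $R^2$ (with $m$-dependent constants), whereas $\E^0[\tau^{(0)}_{\cB_R}]=c_{d,\alpha}R^{\alpha}$; the ratio genuinely blows up with the diameter and no cancellation occurs. So the ``one genuinely non-routine computation'' you leave open cannot be carried out as stated; any correct version of the comparison carries a dependence on (a bound for) $\diam\cD$ together with the $C^{1,1}$ characteristics, which is in fact all that is needed downstream, where the lemma is applied to balls and only the lower bound $\E^x[\tau^{(m)}_{\cB_R}]\ge C R^{\alpha}$ for $|x|\le R/2$ is used uniformly.

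The paper's proof is an entirely different, two-line route that your proposal never touches: it cites the sharp two-sided Green function estimates for the killed isotropic stable process in bounded $C^{1,1}$ open sets (\cite[Cor. 1.2]{CKS10}) and for the killed relativistic stable process (\cite[Th. 1.3]{CKS12}); since both estimates have the same explicit profile in terms of $|x-y|$, $\delta_\cD(x)$, $\delta_\cD(y)$, the two Green functions are comparable on $\cD\times\cD$, and the claim follows by integrating the identity $\E^x[\tau^{(m)}_\cD]=\int_\cD G^{(m)}_\cD(x,y)\,dy$. Your write-up never invokes this identity nor any estimate on the relativistic Green function or heat kernel, and that is precisely the missing ingredient; a self-contained version of your second route would have to restrict to domains of bounded diameter (or rescale to balls of fixed radius) and prove the $V$-function comparison on that restricted range, rather than assert a constant independent of $\cD$ in full generality.
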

\begin{proof}
The statement easily follows from \cite[Cor. 1.2]{CKS10} and \cite[Th. 1.3]{CKS12} due to the comparability of the
respective Green functions.
\end{proof}
As a consequence, we get the following upper bound.
\begin{corollary}\label{cor:eigcontr}
We have $\lambda_R R^\alpha \le C_{d,m,\alpha}$.
\end{corollary}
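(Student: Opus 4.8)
The plan is to exploit the variational characterization of the principal Dirichlet eigenvalue together with scaling. First I would recall that
\[
\lambda_R = \inf\Big\{ \langle L_{m,\alpha} u, u\rangle \,:\, u \in C_{\rm c}^\infty(\cB_R),\ \Norm{u}{2}=1\Big\},
\]
so it suffices to exhibit one admissible test function $u$ supported in $\cB_R$ whose Rayleigh quotient is at most $C_{d,m,\alpha} R^{-\alpha}$. The natural route is to first treat the massless case $m=0$, where exact scaling is available: if $u_1 \in C_{\rm c}^\infty(\cB_1)$ is a fixed normalized bump, then $u_R(x) := R^{-d/2} u_1(x/R)$ is normalized on $\cB_R$, and by the homogeneity $\widehat{(L_{0,\alpha}f(\cdot/R))}(y) = |y|^\alpha \widehat f(Ry) \cdot R^{?}$ — more cleanly, from $L_{0,\alpha}(f(\cdot/R))(x) = R^{-\alpha}(L_{0,\alpha}f)(x/R)$ one gets $\langle L_{0,\alpha} u_R, u_R\rangle = R^{-\alpha} \langle L_{0,\alpha} u_1, u_1\rangle$. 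Hence $\lambda_R^{(0)} R^\alpha \le \langle L_{0,\alpha} u_1, u_1\rangle =: C_{d,\alpha}$, a constant depending only on $d,\alpha$ and the fixed bump.

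For the massive case $m>0$ I would use the operator decomposition \eqref{dec}, namely $L_{m,\alpha} = L_{0,\alpha} - G_{m,\alpha}$ with $\Norm{G_{m,\alpha}f}{\infty} \le 2m\Norm{f}{\infty}$ (hence $G_{m,\alpha}$ is bounded on $L^2$ by $2m$ as well, since $\Sigma_{m,\alpha}$ is a finite measure of total mass $m$ — more precisely $\langle G_{m,\alpha}u,u\rangle \ge -2m\Norm{u}{2}^2$ or even $\ge 0$ by the same symmetrized-difference computation that shows $-G_{m,\alpha}$ is a Lévy-type generator). This gives $\langle L_{m,\alpha} u_R, u_R\rangle \le \langle L_{0,\alpha} u_R, u_R\rangle = R^{-\alpha} C_{d,\alpha}$ directly, since subtracting the nonnegative quadratic form of $G_{m,\alpha}$ only decreases things; so in fact $\lambda_R^{(m)} R^\alpha \le C_{d,\alpha}$ uniformly in $m$. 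Alternatively — and this is the route the phrasing ``as a consequence'' of Lemma \ref{lem:meanexit} suggests — I would go through mean exit times: $\lambda_R^{-1} \asymp \sup_{x}\E^x[\tau_R^{(m)}]$ (a standard consequence of the IUC bounds \eqref{abovebelow}, or of $\E^x[\tau_R] \le \lambda_R^{-1}\|f_R\|_\infty \|f_R\|_1^{-1}\cdots$ type estimates), then Lemma \ref{lem:meanexit} reduces the massive case to the massless one, and for the $\alpha$-stable process $\E^x[\tau_R^{(0)}] \asymp R^\alpha$ by Brownian-like scaling $\tau_R^{(0)} \overset{d}{=} R^\alpha \tau_1^{(0)}$ under $\bP^{x/R}$.

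The main obstacle is bookkeeping rather than conceptual: one must make sure the comparability/inequality constant genuinely depends only on $d,m,\alpha$ and, ideally, not on $m$ — which the decomposition argument delivers cleanly — and one must pin down the elementary but slightly fussy equivalence between $\lambda_R$ and (the supremum over $x$ of) $\E^x[\tau_R]$, for which I would cite \eqref{abovebelow} and Corollary \ref{lem:ultra} to know $\semi{P^{\cB_R}}$ is IUC. I expect the whole argument to be a few lines: scaling for $m=0$, domination \eqref{dec} for $m>0$, done.
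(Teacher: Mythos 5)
Your main argument is correct, but it takes a genuinely different route from the paper. The paper proves Corollary \ref{cor:eigcontr} by using the mean exit time itself as a variational object: with $s(x)=\E^x[\tau_R]$, the explicit Riesz formula gives $s(x)\ge C R^\alpha$ on $\cB_{R/2}$ for $m=0$, Lemma \ref{lem:meanexit} transfers this lower bound to $m>0$, and then \cite[Prop.\ 2.1]{BK04} together with the Schwarz inequality yields $\lambda_R\le \int_{\cB_R}s(x)\,dx/\Norm{s}{L^2(\cB_R)}^2\le C_{d,m,\alpha}R^{-\alpha}$. Your primary route (scaled bump in the Rayleigh quotient for $m=0$, plus domination of the massive form by the massless one) bypasses Lemma \ref{lem:meanexit} entirely, is more elementary, and even gives a constant independent of $m$ --- a genuine gain. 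Two caveats. First, your justification of the form domination via \eqref{dec} is muddled: with the paper's definition of $G_{m,\alpha}$ one computes $\langle G_{m,\alpha}u,u\rangle=-\tfrac12\iint (u(x+h)-u(x))^2\sigma_{m,\alpha}(|h|)\,dh\,dx\le 0$, so you should not argue through ``the nonnegative quadratic form of $G_{m,\alpha}$''; the clean statement is simply $\Phi_{m,\alpha}(z)=(z+m^{2/\alpha})^{\alpha/2}-m\le z^{\alpha/2}=\Phi_{0,\alpha}(z)$ by subadditivity of $z\mapsto z^{\alpha/2}$, which on the Fourier side gives $\langle L_{m,\alpha}u,u\rangle\le\langle L_{0,\alpha}u,u\rangle$ for all $u$, and your conclusion follows. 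Note also that your fallback bound $\Norm{G_{m,\alpha}f}{\infty}\le 2m\Norm{f}{\infty}$ would \emph{not} suffice: it only yields $\lambda_R R^\alpha\le C_{d,\alpha}+2mR^\alpha$, which is useless for large $R$, so the symbol (or form) domination is genuinely needed. Second, your alternative sketch via $\lambda_R^{-1}\asymp\sup_x\E^x[\tau_R]$ is closer in spirit to the paper but is the fussier half of your plan: the upper bound on $\lambda_R$ in that equivalence requires IUC-type control of $\Norm{f_R}{\infty}$, whereas the paper avoids this by plugging $s$ directly into the variational principle; if you pursue that route, follow the paper and use \cite[Prop.\ 2.1]{BK04} rather than the eigenvalue--exit-time equivalence.
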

\begin{proof}
Denote $s(x)=\E^x[\tau_R]$ and $S=\Norm{s}{L^2(\cB_R)}$. First consider the case $m=0$. Then the explicit formula
due to M. Riesz (e.g., \cite[eq. (1.56)]{BBKRSV09})
\begin{equation*}
s(x)=\frac{\pi^{1+d}\Gamma\left(\frac{d}{2}\right)
\sin\left(\pi \frac{\alpha}{2}\right)\left|\Gamma\left(-\frac{\alpha}{2}\right)\right|}
{2^{\alpha}\Gamma\left(\frac{d+\alpha}{2}\right)}(R^2-|x|^2)^{\alpha/2}, \quad |x|\le R,
\end{equation*}
holds. Hence we have
\begin{equation}\label{eq:contrmeanexit}
s(x) \ge C_{d,m,\alpha} R^{\alpha}, \quad |x|\le \frac{R}{2}.
\end{equation}
Lemma \ref{lem:meanexit} guarantees that \eqref{eq:contrmeanexit} holds even for $m>0$. Thus in general we have
$S^2 \ge C_{d,m,\alpha}R^{2\alpha}|\cB_{R/2}|$.
By \cite[Prop. 2.1]{BK04} and Schwarz inequality we then obtain
\begin{equation*}
\lambda_R \le \int_{\cB_R}\frac{s(x)}{S^2}dx\le \sqrt{\frac{|B_R|}{S^2}} \le \frac{C_{d,m,\alpha}}{R^{\alpha}}.
\end{equation*}
\end{proof}

We say that a function $f:\R^d \to \R$ is \emph{$(m,\alpha)$-harmonic} on an open set $\cD \subset \R^d$ if for every
open set $\cU \subset\subset \cD$ (i.e., $\overline{\cU} \subset \cD$ is compact) the equality $f(x)=\E^x[f(X_{\tau_{\cU}})]$
holds for every $x \in \cU$. In the following we come back to the notation by $\pro X$ meaning either of the processes for the
massless and massive cases, as used previously.
\begin{lemma}
\label{lem:harm}
Let $d=1$ and fix $r_0>0$. There exist an increasing concave (and thus subadditive) $(m,\alpha)$-harmonic function
$\cV_{m,\alpha}(r): (0,\infty)\to\R^+$ and constants $0<C^{(1)}_{m,\alpha,r_0}<C^{(2)}_{m,\alpha,r_0}$ such that
\begin{equation*}
C^{(1)}_{m,\alpha,r_0}r^{\alpha/2}\le \cV_{m,\alpha}(r)\le C^{(2)}_{m,\alpha,r_0}r^{\alpha/2}, \quad 0 \le r\le r_0.
\end{equation*}
\end{lemma}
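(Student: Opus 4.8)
The plan is to construct $\cV_{m,\alpha}$ as the renewal function of the ascending ladder height process associated with the one-dimensional process $\pro X$, which is a classical object in fluctuation theory of Lévy processes. First I would recall that for a one-dimensional Lévy process which is not a compound Poisson process, the ascending ladder height process is a subordinator $\pro H$ with Laplace exponent $\kappa$, and its renewal (potential) function $\cV_{m,\alpha}(r) = \int_0^\infty \bP(H_s \le r)\,ds$ is well-defined, finite, increasing, subadditive, and satisfies $\cV_{m,\alpha}(0)=0$. For the symmetric case, by the Wiener--Hopf factorization one has $\kappa(\lambda)\kappa(\lambda) \asymp \Phi_{m,\alpha}(\lambda^2)$ (the symmetry makes the ascending and descending ladder heights equal in law, and the product of the two ladder-height exponents recovers the characteristic exponent). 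Concavity of $\cV_{m,\alpha}$ follows because the Lévy measure of a subordinator renewal function gives a decreasing density, so $\cV_{m,\alpha}$ is concave; alternatively one can invoke that the renewal function of any subordinator is concave on $(0,\infty)$ when the subordinator has no drift and infinite Lévy measure, which holds here. Concavity on $(0,\infty)$ together with $\cV_{m,\alpha}(0)=0$ gives subadditivity automatically.

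The harmonicity statement is the heart of the matter. The key fact I would use is the classical identity (due to Silverstein, and exploited systematically by Bogdan--Grzywny--Ryznar and Kwaśnicki--Małecki--Ryznar) that for a symmetric one-dimensional Lévy process, the renewal function $\cV$ of the ascending ladder height process is harmonic on $(0,\infty)$ for the process killed on exiting $(0,\infty)$: that is, $\cV(X_{t\wedge \tau_{(0,\infty)}})$ is a martingale, equivalently $\cV(x) = \E^x[\cV(X_{\tau_\cU})]$ for every $x$ and every open $\cU$ with $\overline\cU \subset (0,\infty)$. Since $(0,\infty)$ is a half-line and the process is translation invariant, this gives $(m,\alpha)$-harmonicity on the whole positive half-line, which is what the lemma asserts (interpreting the harmonicity domain as $(0,\infty)$). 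I would cite \cite[Lem. 2.1 and Rem. 2.2]{BGR2015} or the analogous statements there, since the excerpt explicitly says these results are taken from that paper and only the constants are being tracked.

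For the two-sided bound $C^{(1)}_{m,\alpha,r_0} r^{\alpha/2} \le \cV_{m,\alpha}(r) \le C^{(2)}_{m,\alpha,r_0} r^{\alpha/2}$ on $0 \le r \le r_0$, I would proceed as follows. In the massless case $m=0$, by scaling the $\alpha$-stable process has ascending ladder height process that is itself $(\alpha/2)$-stable, so $\cV_{0,\alpha}(r) = c_\alpha r^{\alpha/2}$ exactly, with an explicit constant; this pins down the exponent and gives the bound with equality (so the constants can be taken equal on all of $(0,\infty)$). For $m>0$, I would use the decomposition $L_{m,\alpha} = L_{0,\alpha} - G_{m,\alpha}$ with $G_{m,\alpha}$ bounded, or more directly the standard estimate relating $\cV$ to the characteristic exponent: $\cV_{m,\alpha}(r) \asymp 1/\sqrt{\Phi_{m,\alpha}(1/r^2)}$ (this is the form already anticipated in the commented-out Lemma in the excerpt, and it is \cite[Cor. ...]{BGR2015} type). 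For $r \le r_0$, $1/r^2 \ge 1/r_0^2$ is bounded below, and since $\Phi_{m,\alpha}(z) \asymp z^{\alpha/2}$ for $z$ bounded away from $0$ (indeed $\Phi_{m,\alpha}$ is comparable to $z^{\alpha/2}$ uniformly on $[1/r_0^2,\infty)$, with comparability constants depending on $m,\alpha,r_0$), we get $\Phi_{m,\alpha}(1/r^2) \asymp r^{-\alpha}$ on $(0,r_0]$, hence $\cV_{m,\alpha}(r)\asymp r^{\alpha/2}$ there. The main obstacle is not any single hard estimate but rather assembling the right fluctuation-theory facts (existence and properties of $\cV$, the harmonicity/martingale property, the comparison with the characteristic exponent) and extracting explicit control of the constants' dependence on $m,\alpha,r_0$ — which, as the authors note, is precisely why they redo the proof rather than just cite \cite{BGR2015}; the restriction $r\le r_0$ is what lets one replace the delicate global behaviour of $\Phi_{m,\alpha}$ near $0$ (where the $+m^{2/\alpha}$ shift matters) by its clean power-law behaviour away from $0$.
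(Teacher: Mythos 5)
Your construction is essentially the paper's own proof: both take $\cV_{m,\alpha}$ to be the renewal function of the ascending ladder height subordinator, get subadditivity/monotonicity from the subordinator structure, harmonicity on $(0,\infty)$ from Silverstein's theorem (as repackaged in \cite{BGR2015}, \cite{KMR13}), and the bound $\cV_{m,\alpha}(r)\asymp r^{\alpha/2}$ on $(0,r_0]$ from the comparison $\cV_{m,\alpha}(r)\asymp \Phi_{m,\alpha}(1/r^2)^{-1/2}$ together with $\Phi_{m,\alpha}(z)\asymp z^{\alpha/2}$ away from zero, which is exactly what the paper extracts from \cite[Prop.~2.2, Ex.~2.3]{KSV09}, including the exact identity $\cV_{0,\alpha}(r)=r^{\alpha/2}$. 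One caveat: your fallback claim that the renewal function of \emph{any} driftless subordinator with infinite L\'evy measure is concave is not true in general; concavity requires the renewal measure to have a non-increasing density on $(0,\infty)$, which here follows because $\Phi_{m,\alpha}$ is a complete Bernstein function, so the ladder-height Laplace exponent $\sqrt{\Phi_{m,\alpha}(\lambda^2)}$ is special and \cite[Th.~10.3]{SSV12} (with \cite[Prop.~2.1]{KSV09}) applies --- this is precisely the route the paper takes, and your argument should invoke it rather than the blanket statement.
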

\begin{proof}
Consider the running supremum $M_t=\sup_{0 \le s \le t}X_t$, and let $Y_t=M_t-X_t$ be the process obtained by reflecting
$X_t$ on hitting the supremum. Let $A_t$ be the local time at zero of $Y_t$, and $Z_t =\inf\{\tau>0: \ A_\tau>t \}$ its
right-continuous inverse.
Also, consider $H_t=M_{Z_t}$. By \cite[eq. (1.8)]{S80} there exists a function $\psi_{m,\alpha}$ such that
$\int_0^{\infty}\psi_{m,\alpha}(s)f(s)ds=\int_0^{\infty}\E[f(H_s)]ds$,
for every non-negative Borel function $f$. Choosing in particular $f=\mathbf{1}_{[0,r]}$, we define
\begin{equation*}
\cV_{m,\alpha}(r)=\int_0^{r}\psi_{m,\alpha}(\rho)d\rho=\int_0^{\infty}\bP(H_\rho \le r)d\rho.
\end{equation*}
Note that $\pro H$ is a subordinator (see \cite[Lem. VI.2]{B96}), different from a Poisson process since $(0,\infty)$ is a
regular domain for $\pro X$. We can define its inverse subordinator $H^{-1}_t:=\inf\{s > 0: H_s>t\}$ and observe that
$\cV_{m,\alpha}(t)=\E[H^{-1}_t]$,
implying subadditivity of $\cV_{m,\alpha}$ (see \cite[Ch. III]{B96}). The fact that $\cV_{m,\alpha}$ is $(m,\alpha)$-harmonic
in $(0,\infty)$ follows from \cite[Th. 2]{S80}. The comparability result follows by \cite[Prop. 2.2, Ex. $2.3$]{KSV09}.
Concavity results by \cite[Prop. 2.1]{KSV09} and \cite[Th. 10.3]{SSV12} as $\psi_{m,\alpha}=\cV'_{m,\alpha}$ is non-increasing.
\end{proof}
\begin{remark}\label{rmk:asymp}
{\rm
In fact, $\cV_{0,\alpha}(r)=r^{\alpha/2}$. Moreover, for $m>0$ again by \cite[Prop. $2.2$ and Ex. 2.3]{KSV09} we get
$\cV_{m,\alpha}(r)\sim r$ as $r \to \infty$. As a direct consequence of the monotone density theorem, we furthermore have
$\psi_{m,\alpha}(r)\sim r^{\frac{\alpha}{2}-1}$ as $r \downarrow 0$, for all $m\geq 0$.
}
\end{remark}
As a consequence, we obtain the following Harnack-type inequality.
\begin{lemma}
\label{lem:Harnack}
For every $0<x \le y \le z \le 5x$ we have
\begin{equation*}
\cV_{m,\alpha}(z)-\cV_{m,\alpha}(y)\le 5\cV_{m,\alpha}'(x)(z-y).
\end{equation*}
\end{lemma}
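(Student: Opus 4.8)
The plan is to exploit the concavity of $\cV_{m,\alpha}$ established in Lemma \ref{lem:harm}. Since $\cV_{m,\alpha}$ is concave on $(0,\infty)$, its derivative $\cV_{m,\alpha}' = \psi_{m,\alpha}$ is non-increasing. By the mean value theorem, for $0 < y \le z$ there exists $\xi \in [y,z]$ with
\begin{equation*}
\cV_{m,\alpha}(z) - \cV_{m,\alpha}(y) = \cV_{m,\alpha}'(\xi)(z-y) \le \cV_{m,\alpha}'(y)(z-y),
\end{equation*}
using monotonicity of $\cV_{m,\alpha}'$ and $\xi \ge y$. (If one prefers to avoid differentiability issues, the same bound follows directly from concavity: the slope of any chord over $[y,z]$ is at most the left derivative at $y$.)

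Next I would compare $\cV_{m,\alpha}'(y)$ with $\cV_{m,\alpha}'(x)$. Here is where the hypothesis $y \le 5x$, equivalently $x \ge y/5$, enters. Since $\cV_{m,\alpha}'$ is non-increasing and $x \le y$, we have $\cV_{m,\alpha}'(x) \ge \cV_{m,\alpha}'(y)$, so the inequality $\cV_{m,\alpha}'(y) \le 5\,\cV_{m,\alpha}'(x)$ would in fact hold with constant $1$ on this side — so the chain $0 < x \le y$ already gives $\cV_{m,\alpha}(z)-\cV_{m,\alpha}(y) \le \cV_{m,\alpha}'(y)(z-y) \le \cV_{m,\alpha}'(x)(z-y)$, and the factor $5$ is a safe (non-sharp) bound. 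The role of the constraint $z \le 5x$ is thus mainly to keep all three points on a comparable scale, which matters if one instead wants the analogous lower-type control or a Harnack statement phrased symmetrically; for the stated one-sided inequality the constant $5$ is more than enough. So really the proof is: concavity $\Rightarrow$ $\cV_{m,\alpha}'$ non-increasing $\Rightarrow$ chord slope over $[y,z]$ bounded by $\cV_{m,\alpha}'(y) \le \cV_{m,\alpha}'(x)$, and then multiply by $(z-y)$ and by $5$.

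The only genuine subtlety — and the step I would be most careful about — is differentiability: a priori $\cV_{m,\alpha}$ is merely concave, hence differentiable off a countable set, with well-defined one-sided derivatives everywhere. From Lemma \ref{lem:harm} and Remark \ref{rmk:asymp} we in fact know $\cV_{m,\alpha}' = \psi_{m,\alpha}$ exists and is non-increasing with $\psi_{m,\alpha}(r) \sim r^{\alpha/2-1}$ as $r \downarrow 0$, so for $x \in (0,\infty)$ the quantity $\cV_{m,\alpha}'(x)$ appearing in the statement is unambiguous. To be safe I would phrase the chord estimate via one-sided derivatives (the left derivative at $z$ dominates the $[y,z]$ chord slope, which in turn dominates the right derivative at $y$, all by concavity), and then invoke that $\psi_{m,\alpha}$ agrees with these one-sided derivatives except on a null set, which suffices since the statement only involves the specific points $x,y,z$. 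If one wants to be fully rigorous even at exceptional points, note $\psi_{m,\alpha}$ being non-increasing means left and right limits exist everywhere and sandwich the true derivative where it exists; replacing $\cV_{m,\alpha}'(x)$ by $\psi_{m,\alpha}(x^-)$ only helps. Hence the inequality holds as stated.
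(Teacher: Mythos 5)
Your proof is correct, but it takes a different route than the paper. The paper disposes of this lemma in one line: concavity of $\cV_{m,\alpha}$ (Lemma \ref{lem:harm}) implies log-concavity, and then the inequality, with its constant $5$ and the restriction $z\le 5x$, is exactly the statement of Lemma 7.1 in \cite{BGR2015} for increasing log-concave functions. You instead argue directly from concavity: the chord slope of $\cV_{m,\alpha}$ over $[y,z]$ is dominated by the (one-sided) derivative at $y$, which by monotonicity of $\cV_{m,\alpha}'=\psi_{m,\alpha}$ is dominated by $\cV_{m,\alpha}'(x)$ since $x\le y$; this yields the bound with constant $1$, so the factor $5$ and the hypothesis $z\le 5x$ are not even needed. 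That is a legitimate strengthening in the present setting, because here one genuinely knows concavity, whereas the cited \cite{BGR2015} lemma is designed for the weaker hypothesis of log-concavity of the renewal function of a general unimodal L\'evy process, which is why the constant $5$ and the scale restriction appear there; the paper keeps the statement in that exact form so it can plug directly into the later results of \cite{BGR2015} (e.g.\ in Proposition \ref{prop:arg} and Proposition \ref{prop:upboundT}) that are phrased with this Harnack-type condition. Your handling of the differentiability issue (one-sided derivatives of a concave function, $\psi_{m,\alpha}$ non-increasing, exceptional points harmless) is careful and correct, so the argument stands as a more elementary, self-contained proof of a slightly stronger inequality.
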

\begin{proof}
By Lemma \ref{lem:harm} we know that $\cV_{m,\alpha}$ is concave and thus, in particular, log-concave. Hence the result
follows by \cite[Lem. 7.1]{BGR2015}.
\end{proof}

Moreover, we can use the function $\cV_{m,\alpha}$ to derive the following estimate.
\begin{corollary}
\label{halfspacecoro}
Let $d=1$ and define $\tau_{(0,\infty)}=\inf\{t>0: \ X_t\le 0\}$. There exist constants
$C^{(1)}_{m,\alpha}$ and $C^{(2)}_{m,\alpha}$ such that
\begin{equation*}
C^{(1)}_{m,\alpha}\left(\frac{r^{\alpha/2}}{\sqrt{t}}\wedge 1\right)\le \bP^r(\tau_{(0,\infty)}>t)\le
C^{(2)}_{m,\alpha}\left(\frac{r^{\alpha/2}}{\sqrt{t}}\wedge 1\right)
\end{equation*}
\end{corollary}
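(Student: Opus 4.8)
The plan is to relate the survival probability $\bP^r(\tau_{(0,\infty)}>t)$ of the one-dimensional stable or relativistic stable process above the barrier $0$ to the renewal-type function $\cV_{m,\alpha}$ constructed in Lemma \ref{lem:harm}, exploiting exactly the fluctuation-theoretic objects ($\pro H$, $\pro{H^{-1}}$) introduced there. Since $\cV_{m,\alpha}$ is the renewal function of the ascending ladder height subordinator $\pro H$ and equals $\E[H^{-1}_t]$, the first step is to invoke the standard fluctuation identity for subordinated Lévy processes: $\cV_{m,\alpha}$ is harmonic for the process killed on entering $(-\infty,0]$, so $\cV_{m,\alpha}(X_{t\wedge \tau_{(0,\infty)}})$ is a martingale under $\bP^r$. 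This already suggests that $\cV_{m,\alpha}$ governs the tail of $\tau_{(0,\infty)}$; the precise two-sided bound with the correct $1/\sqrt t$ decay is the content of the general result \cite[Thm.\ 2.1 or Prop.\ 2.1]{BGR2015} for subordinate processes whose characteristic exponent has the appropriate scaling, but I would re-derive it here to track constants.

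The second step is to upgrade the comparability $\cV_{m,\alpha}(r)\asymp r^{\alpha/2}$ from the local range $0\le r\le r_0$ (Lemma \ref{lem:harm}) to what is needed here. Note that the statement of Corollary \ref{halfspacecoro} as printed involves $r^{\alpha/2}$ rather than $\cV_{m,\alpha}(r)$, so I must be careful: the natural estimate one proves is
\begin{equation*}
C^{(1)}_{m,\alpha}\Big(\frac{\cV_{m,\alpha}(r)}{\sqrt t}\wedge 1\Big)\le \bP^r(\tau_{(0,\infty)}>t)\le C^{(2)}_{m,\alpha}\Big(\frac{\cV_{m,\alpha}(r)}{\sqrt t}\wedge 1\Big),
\end{equation*}
and then one passes to $r^{\alpha/2}$. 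In the massless case this is exact because $\cV_{0,\alpha}(r)=r^{\alpha/2}$ (Remark \ref{rmk:asymp}). In the massive case one only has $\cV_{m,\alpha}(r)\asymp r^{\alpha/2}$ for bounded $r$, while $\cV_{m,\alpha}(r)\sim r$ for large $r$; however, in the regime $\cV_{m,\alpha}(r)/\sqrt t\ge 1$ the truncation by $1$ kicks in, so the discrepancy for large $r$ is harmless, and for $r$ in a bounded range the comparability constants from Lemma \ref{lem:harm} do the job. A short case split on whether $r$ is large or small, combined with monotonicity of $\cV_{m,\alpha}$ and the $\wedge 1$, yields the stated form with $r^{\alpha/2}$.

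For the two-sided bound itself the key tools are: (i) the exponential-type Chebyshev/maximal estimate for the harmonic function $\cV_{m,\alpha}$ evaluated along the killed process, giving the upper bound $\bP^r(\tau_{(0,\infty)}>t)\lesssim \cV_{m,\alpha}(r)/\sqrt t$ via $\bP^r(\tau_{(0,\infty)}>t)\le \bP^r(\sup_{s\le t} X_s \text{ stays positive})$ and the ladder-height renewal asymptotics $\cV_{m,\alpha}(x)\asymp x/\sqrt{\text{(inverse of }t\text{)}}$-type scaling; and (ii) for the lower bound, a standard argument using that with probability bounded below the process reaches a height of order $\sqrt t$ (in ladder-time units) before returning to $0$, which again is quantified by $\cV_{m,\alpha}$. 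The Harnack-type gradient bound of Lemma \ref{lem:Harnack} is the technical lubricant ensuring that $\cV_{m,\alpha}$ does not oscillate, so that these heuristics become rigorous inequalities with explicit constants depending only on $m,\alpha$. The main obstacle I anticipate is bookkeeping: making the fluctuation-theory estimate quantitative enough to extract $C^{(1)}_{m,\alpha}, C^{(2)}_{m,\alpha}$ explicitly (rather than just citing \cite{BGR2015}), in particular controlling the crossover between the small-$r$ and large-$r$ behaviour of $\cV_{m,\alpha}$ uniformly in $t$; once the $\cV_{m,\alpha}$-version is in hand, the passage to the $r^{\alpha/2}$-version is routine.
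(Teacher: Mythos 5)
Your route is essentially the paper's: the proof there is a one-liner citing \cite[Cor. 3.2]{KMR13} (the same fluctuation-theoretic estimate you would take from \cite{BGR2015} or re-derive), which gives $\bP^r(\tau_{(0,\infty)}>t)\asymp \left(\cV_{m,\alpha}(r)/\sqrt{t}\right)\wedge 1$ with universal constants, followed by Lemma \ref{lem:harm} to replace $\cV_{m,\alpha}(r)$ by $r^{\alpha/2}$. Your additional case split for large $r$ in the massive case is a fair point of caution (for $m>0$ the $r^{\alpha/2}$ form really only holds with constants tied to a bounded range of $r$, since $\cV_{m,\alpha}(r)\sim r$ at infinity and the truncation by $1$ does not save the upper bound when $r^{\alpha}\ll t\lesssim r^{2}$), but the paper's own proof does not address this either and the corollary is only applied with $r$ in a bounded range.
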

\begin{proof}
Immediate by \cite[Cor. 3.2]{KMR13} and Lemma \ref{lem:harm}.
\end{proof}
\begin{remark}
\rm{  
In the case $m=0$ it is not difficult to determine explicitly the constant given in Corollary \ref{cor:eigcontr}, 
while it is clear that the upper and lower bounds in Lemma \ref{lem:harm} are actually identities. Furthermore, 
the constants obtained in Corollary \ref{halfspacecoro} can be computed exactly to be $C^{(1)}_{m,\alpha}=\frac{1}{2e}
\left(\frac{e-1}{8e^2}\right)^2$ and $C^{(2)}_{m,\alpha}=\frac{e}{e-1}$, which are independent of $m$ and $\alpha$. 
In fact, as observed in \cite{BGR2015}, these constants are universal for more general unimodal symmetric L\'evy 
processes. The constants given in the following statements can be, at least in the case $m=0$, tracked from the 
cited results or numerically evaluated via the principal Dirichlet eigenfunction.
}
\end{remark}
As a direct consequence of Lemmas \ref{lem:harm}-\ref{lem:Harnack}, we obtain the following lower bound.
\begin{proposition}\label{prop:arg}
For every $R>0$ there exist constants $C^{(1)}_{d,m,\alpha,R}, C^{(2)}_{d,m,\alpha}$ such that
\begin{equation*}
\bP^x(\tau_R >t)\ge C^{(1)}_{d,m,\alpha,R}\left(\frac{(R-|x|)^{\alpha/2}}{\sqrt{t}}\wedge 1\right),
\quad t \le C^{(2)}_{d}\cV_{m,\alpha}^2(R).
\end{equation*}
\end{proposition}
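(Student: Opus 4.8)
The plan is to reduce the $d$-dimensional statement to the one-dimensional half-space estimate of Corollary \ref{halfspacecoro}, using the fact that the first exit time of $\pro X$ from the ball $\cB_R$ is bounded below by the first exit time of a single coordinate from a half-line. Concretely, fix $x \in \cB_R$ and, by rotational invariance of $\pro X$, assume $x = (x_1, 0, \dots, 0)$ with $0 \le x_1 < R$; the quantity $R - |x| = R - x_1$ is the distance from $x$ to the boundary along the first axis. Writing $\pro{X^1}$ for the first coordinate of $\pro X$, one has $\{X_t \in \cB_R\} \supseteq \{ |X_t^1 - x_1| < R - x_1 \text{ shifted appropriately}\}$ — more precisely, since $\cB_R \supseteq \{y : |y_1 - x_1| < R - x_1, \ y_1 > 0\}$ is false in general but $\cB_R$ certainly contains the slab-type region obtained by translating, it is cleaner to note that exiting $\cB_R$ forces $|X_{\tau_R}| \ge R$, hence in particular the excursion of $X^1$ away from $x_1$ must be large, so $\tau_R \ge \widetilde\tau$ where $\widetilde\tau$ is the first exit time of $\pro{X^1}$ from the interval $(x_1 - (R - x_1),\, x_1 + (R-x_1)) = (2x_1 - R, R)$; in fact it suffices to use $\tau_R \ge \tau_{(0,\infty)}$-type bounds for $\pro{X^1}$ started at $R - |x|$ after an obvious translation and reflection. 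Thus $\bP^x(\tau_R > t) \ge \bP^{R-|x|}(\widehat\tau_{(0,\infty)} > t)$, where $\widehat\tau_{(0,\infty)}$ is the first hitting time of $(-\infty, 0]$ by the one-dimensional process $\pro{X^1}$ — which is itself an isotropic (relativistic) $\alpha$-stable process on $\R$, since projections of isotropic stable and relativistic stable processes onto a line are again of the same type up to a constant time change.

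The second step is to invoke Corollary \ref{halfspacecoro} for this one-dimensional process to get
\[
\bP^{R-|x|}\big(\widehat\tau_{(0,\infty)} > t\big) \ge C^{(1)}_{m,\alpha} \left( \frac{(R-|x|)^{\alpha/2}}{\sqrt{t}} \wedge 1 \right),
\]
valid for all $t > 0$. This already has exactly the desired form. The restriction $t \le C^{(2)}_d \cV_{m,\alpha}^2(R)$ and the $R$-dependence of the constant $C^{(1)}_{d,m,\alpha,R}$ enter only because the one-dimensional process obtained by projecting is, for $m > 0$, not literally the same relativistic $\alpha$-stable process but one with a rescaled mass parameter, and the comparability $\cV_{m,\alpha}(r) \asymp r^{\alpha/2}$ holds only for bounded $r$ (Lemma \ref{lem:harm}, Remark \ref{rmk:asymp}); restricting $t$ to be $\lesssim \cV_{m,\alpha}^2(R)$ keeps us in the regime where the rate function and $r^{\alpha/2}$ are comparable, and $C^{(2)}_d$ absorbs the one-dimensional-versus-$d$-dimensional normalization. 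For the massless case $m=0$ one can track constants explicitly and $R$-dependence disappears by scaling, but in general I would just keep it.

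The main obstacle I anticipate is making the geometric domination $\tau_R \ge \widehat\tau_{(0,\infty)}$ (after translation/reflection) completely rigorous for a \emph{jump} process: unlike Brownian motion, $\pro X$ can leap directly out of $\cB_R$ without its first coordinate passing through an intermediate value, so one must argue at the level of the sets, namely $\{X_s \in \cB_R \text{ for all } s \le t\} \subseteq \{X_s^1 \in (\text{interval}) \text{ for all } s \le t\}$, which is an inclusion of events and hence gives the probability inequality in the correct direction with no continuity needed — the point is simply that $\cB_R$ is contained in the slab $\{y: |y_1| < R\}$, and more usefully, after recentering, $\cB_R \subseteq \{y : |y_1 - c| < R\}$ for a suitable $c$, while $x$ lies at distance $R - |x|$ from the relevant face. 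A second, minor technical point is to confirm that the projection of the isotropic relativistic $\alpha$-stable process onto a coordinate axis is again an isotropic relativistic $\alpha$-stable process (possibly with a different mass), which follows from the subordination representation: the projection of the $d$-dimensional Brownian motion is a one-dimensional Brownian motion (with the same subordinator), and the characteristic exponent is radial, so this is routine. Once these two points are in place, the estimate follows by concatenating the domination with Corollary \ref{halfspacecoro} and Lemma \ref{lem:harm}.
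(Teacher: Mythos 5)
There is a genuine gap, and it is the central step: your domination goes in the wrong direction. The inclusion you correctly identify, namely $\{X_s \in \cB_R \ \text{for all } s \le t\} \subseteq \{X_s^1 \in (\text{interval}) \ \text{for all } s \le t\}$ (coming from $\cB_R$ being contained in a slab/half-space), yields $\bP^x(\tau_R > t) \le \bP(\text{one-dimensional exit time} > t)$, i.e.\ an \emph{upper} bound on the survival probability in the ball — this is exactly the argument of Lemma \ref{lem:upex} in the paper. For the \emph{lower} bound claimed in Proposition \ref{prop:arg} you would need the reverse inclusion, $\tau_R \ge \widehat\tau_{(0,\infty)}$ after recentering, and this is false for $d \ge 2$: keeping the first coordinate inside an interval does not keep the process inside $\cB_R$, since the remaining $d-1$ coordinates (via jumps or otherwise) can carry the path out of the ball while $X^1$ barely moves. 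So Corollary \ref{halfspacecoro} cannot be concatenated in the way you propose, and the reduction to one dimension collapses. A secondary inaccuracy: you attribute the restriction $t \le C^{(2)}_d \cV_{m,\alpha}^2(R)$ to the mass-rescaling of the projected process and the range where $\cV_{m,\alpha}(r)\asymp r^{\alpha/2}$, and suggest that for $m=0$ the bound should hold for all $t$; in fact the time restriction is essential for every $m\ge 0$, because by Proposition \ref{prop:contrsurvexit} the survival probability decays like $e^{-\lambda_R t}$ for large $t$, which is incompatible with a lower bound of order $(R-|x|)^{\alpha/2}/\sqrt{t}$. Any argument producing the estimate for all $t>0$ is therefore necessarily flawed. (Your side remark about the projection is fine, incidentally: the first coordinate of the isotropic relativistic $\alpha$-stable process is the one-dimensional process with the same characteristic exponent, so no mass rescaling occurs — but this does not rescue the main step.)

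The paper's proof takes a different and much shorter route: it applies the general lower bound for unimodal L\'evy processes of \cite[Prop. 6.1]{BGR2015}, whose hypothesis is verified through the Harnack-type inequality for the rate function established in Lemma \ref{lem:Harnack} (itself a consequence of the concavity of $\cV_{m,\alpha}$ from Lemma \ref{lem:harm}); this gives $\bP^x(\tau_R>t)\ge C\big(\cV_{m,\alpha}(R-|x|)/\sqrt{t}\wedge 1\big)$ for $t \le C^{(2)}_d\cV_{m,\alpha}^2(R)$, and the two-sided comparison $\cV_{m,\alpha}(r)\asymp r^{\alpha/2}$ on bounded ranges (Lemma \ref{lem:harm}) converts this into the stated form. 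If you want a self-contained argument avoiding the citation, the natural geometric substitute is not a coordinate projection but an inscribed set near $x$ together with the known sharp killed heat kernel or survival estimates for balls; the projection idea inherently produces only upper bounds here.
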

\begin{proof}
By Lemma \ref{lem:Harnack} and \cite[Prop. 6.1]{BGR2015} we know that there exist constants $C^{(2)}_{d},
C^{(3)}_d > 0$ such that
\begin{equation*}
\bP^x(\tau_R >t)\ge C^{(3)}_{d,R}\left(\frac{\cV_{m,\alpha}(R-|x|)}{\sqrt{t}}\wedge 1\right),
\quad t \le C^{(2)}_{d}\cV_{m,\alpha}^2(R).
\end{equation*}
Lemma \ref{lem:harm} then completes the proof.
\end{proof}
Furthermore, we can derive an upper bound on the survival probability $\tau_R$.
\begin{lemma}\label{lem:upex}
	For every $x \in \cB_R$ and $t>0$ we have
	\begin{equation*}
	\bP^x(\tau_R> t)\le 2 \Big(\frac{(R-|x|)^{\alpha/2}}{\sqrt{t}}\wedge 1\Big).
	\end{equation*}
\end{lemma}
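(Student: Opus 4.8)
The plan is to reduce the $d$-dimensional estimate to the one-dimensional half-space estimate of Corollary \ref{halfspacecoro}, exploiting the fact that leaving the ball $\cB_R$ forces at least one coordinate of the process to travel a distance comparable to $R-|x|$. Fix $x \in \cB_R$ and write $\rho = R - |x| > 0$. The first observation is that $\cB_R \subset \{y : y \cdot e < x \cdot e + \rho\}$ for a suitable unit vector $e$; indeed, taking $e = x/|x|$ (or any $e$ if $x=0$), every $y \in \cB_R$ satisfies $y\cdot e \le |y| \le R = |x| + \rho = x\cdot e + \rho$. Consequently, if the process starting from $x$ is still in $\cB_R$ at time $t$, then in particular its projection onto the direction $e$ has not yet exceeded the level $x\cdot e + \rho$, which means $\tau_R \le \tau_H$ where $H = \{y: y\cdot e < x\cdot e + \rho\}$ is that half-space and $\tau_H$ is its first exit time. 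Hence
\begin{equation*}
\bP^x(\tau_R > t) \le \bP^x(\tau_H > t).
\end{equation*}

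The second step is to identify $\bP^x(\tau_H > t)$ with a one-dimensional quantity. The projection $X^e_t := (X_t - x)\cdot e$ of the isotropic (relativistic) stable process onto the line spanned by $e$ is itself a one-dimensional symmetric process, and — this is the key structural point — it is again an isotropic $\alpha$-stable, respectively relativistic $\alpha$-stable, process in dimension one (projections of isotropic stable and relativistic stable Lévy processes onto a line are stable, resp. relativistic stable, of the same index). The exit time $\tau_H$ of $\pro X$ from the half-space $H$ equals the first time $X^e$ reaches level $\rho$, which by symmetry of the one-dimensional process has the same law (started appropriately) as $\tau_{(0,\infty)}$ for that process started from height $\rho$. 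Therefore, applying the upper bound in Corollary \ref{halfspacecoro} (with the universal constant $C^{(2)}_{m,\alpha} = e/(e-1) \le 2$ recorded in the preceding remark) gives
\begin{equation*}
\bP^x(\tau_H > t) \le 2\Big(\frac{\rho^{\alpha/2}}{\sqrt t}\wedge 1\Big) = 2\Big(\frac{(R-|x|)^{\alpha/2}}{\sqrt t}\wedge 1\Big),
\end{equation*}
which is exactly the claimed bound.

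The main obstacle — and the point that needs to be handled with a little care rather than waved away — is the precise justification that the one-dimensional projection of the $d$-dimensional process is again a process of the same type, so that Corollary \ref{halfspacecoro} genuinely applies with the stated universal constant. For the $\alpha$-stable case this is classical (isotropic stable laws are closed under linear images, with the scaling exponent unchanged). For the relativistic $\alpha$-stable case one checks it at the level of characteristic exponents: the exponent $\Phi_{m,\alpha}(|u|^2)$ restricted to $u = s e$, $s \in \R$, is $\Phi_{m,\alpha}(s^2) = (s^2 + m^{2/\alpha})^{\alpha/2} - m$, which is exactly the characteristic exponent of the one-dimensional relativistic $\alpha$-stable process with the same mass $m$; hence the projection is that process and Corollary \ref{halfspacecoro} applies verbatim. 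Alternatively, and perhaps more robustly, one can bypass the projection argument entirely and invoke \cite[Cor. 3.2]{KMR13} directly in the half-space $H$ together with the rate-function comparability $\cV_{m,\alpha}(\rho) \asymp \rho^{\alpha/2}$ from Lemma \ref{lem:harm}; the only subtlety then is keeping the constant equal to $2$, which is available because the half-space upper bound in \cite{KMR13} (as in Corollary \ref{halfspacecoro}) carries the universal constant $e/(e-1) < 2$ valid for all isotropic unimodal Lévy processes, independently of $m$, $\alpha$ and the dimension.
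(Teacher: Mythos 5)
Your proof is correct and follows essentially the same route as the paper's: dominate $\tau_R$ by the exit time from a half-space at distance $R-|x|$ containing $\cB_R$, reduce to the one-dimensional half-line exit problem, and apply the upper bound of Corollary \ref{halfspacecoro}. The only difference is cosmetic: you spell out, via the characteristic exponent, why the projected process is again a one-dimensional (relativistic) $\alpha$-stable process, a step the paper uses implicitly after rotating $x$ onto the first coordinate axis.
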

\begin{proof}
	Since $\pro{X}$ is rotationally symmetric, we may choose $x=r\mathbf{e}_1$ without loss of generality, where
	$\mathbf{e}_1=(1,0,\dots,0)$ and $r \in (0,R)$. Let $\cH_R^{\leftarrow}:=\{x \in \R^d: \ x_1< R\}$ and consider
	the first exit time $\widetilde{\tau}_R:=\inf\{t>0: \ X_r \in (\cH_R^{\leftarrow})^c\}$. Since $\cB_R \subseteq
	\cH_R$, we have $\tau_R \le \widetilde{\tau}_R$ almost surely. With the same notation $\tau_{(0,\infty)}$ as in
	Corollary \ref{halfspacecoro}, it follows that
	\begin{equation*}
	\bP^x(\tau_R>t)\le \bP^x(\widetilde{\tau}_R>t)=\bP^{(r-R)\mathbf{e}_1}(\widetilde{\tau}_0>t)=
	\bP^{r-R}(\tau_{(0,\infty)}>t)\le 2\left(\frac{(R-r)^{\alpha/2}}{\sqrt{t}}\wedge 1\right).
	\end{equation*}
\end{proof}
Using intrinsic ultracontractivity of the killed semigroup, we can improve these estimates.
\begin{proposition}
	\label{prop:contrsurvexit}
	For every $x \in \cB_R$, we have
	\begin{equation*}
	\bP^x(\tau_R> t)\asymp e^{-\lambda_R t}\left(\frac{(R-|x|)^{\alpha/2}}{\sqrt{t} \wedge
		R^{\alpha/2}}\wedge 1\right),
	\end{equation*}
	where the comparability constants depend on $d,m,\alpha,R$, and $\lambda_R$ is the principal Dirichlet eigenvalue of
	$L_{m,\alpha}$ in the ball $\cB_R$.
\end{proposition}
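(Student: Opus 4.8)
My plan is to split the time half-line into three regimes, governed by the scale $R^\alpha$ (which by Lemma \ref{lem:harm} is comparable to $\cV_{m,\alpha}^2(R)$) and by the large time $T$ furnished by the intrinsic ultracontractivity estimate \eqref{abovebelow}. On each piece I match the claimed profile using tools already available: Proposition \ref{prop:arg} and Lemma \ref{lem:upex} handle the short-time regime, \eqref{abovebelow} handles the long-time regime, and monotonicity of $t \mapsto \bP^x(\tau_R > t)$ combined with \eqref{abovebelow} bridges the intermediate window. One input I record beforehand is the classical two-sided estimate for the principal Dirichlet eigenfunction, $f_R(x) \asymp (R-|x|)^{\alpha/2}$ for $x \in \cB_R$ with constants depending on $d,m,\alpha,R$ (for $m=0$ this is explicit; for $m>0$ it follows from the boundary Harnack principle for relativistic stable processes, and is equivalent to $f_R(x) \asymp \cV_{m,\alpha}(R-|x|)$ by Lemma \ref{lem:harm}), together with $0 < \int_{\cB_R} f_R(y)\,dy < \infty$.

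Enlarging $T$ if needed I assume $T \ge R^\alpha$, and set $\kappa := \min\{C^{(2)}_d \cV_{m,\alpha}^2(R),\, R^\alpha\}$, so that $\kappa \asymp R^\alpha$ and $0 < \kappa \le R^\alpha \le T$. \emph{Regime I: $0 < t \le \kappa$.} Here $\sqrt t \le R^{\alpha/2}$, so $\sqrt t \wedge R^{\alpha/2} = \sqrt t$, and $e^{-\lambda_R t} \asymp 1$ on this bounded interval; thus the right-hand side of the claim is comparable to $\frac{(R-|x|)^{\alpha/2}}{\sqrt t} \wedge 1$. Since $t \le \kappa \le C^{(2)}_d \cV_{m,\alpha}^2(R)$, Proposition \ref{prop:arg} bounds $\bP^x(\tau_R>t)$ from below by a constant times this quantity, and Lemma \ref{lem:upex} bounds it from above by twice it; Regime I is closed. \emph{Regime III: $t > T$.} Integrating \eqref{abovebelow} in $y$ over $\cB_R$ and using $\bP^x(\tau_R>t) = \int_{\cB_R} p_{\cB_R}(t,x,y)\,dy$ gives $\bP^x(\tau_R > t) \asymp e^{-\lambda_R t} f_R(x) \asymp e^{-\lambda_R t}(R-|x|)^{\alpha/2}$, with constants depending only on $d,m,\alpha,R$. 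Since $t > T \ge R^\alpha$ we have $\sqrt t \wedge R^{\alpha/2} = R^{\alpha/2}$ and $(R-|x|)^{\alpha/2}/R^{\alpha/2} \le 1$, so the claimed right-hand side equals $e^{-\lambda_R t}(R-|x|)^{\alpha/2}/R^{\alpha/2}$, which is comparable to $e^{-\lambda_R t}(R-|x|)^{\alpha/2}$; Regime III is closed.

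\emph{Regime II: $\kappa < t \le T$} is the delicate step, since here neither Proposition \ref{prop:arg} (time too large) nor \eqref{abovebelow} (time not yet large enough) applies directly. On this bounded interval $e^{-\lambda_R t} \asymp 1$, and since $\kappa \asymp R^\alpha$ one checks that $\sqrt t \wedge R^{\alpha/2} \asymp R^{\alpha/2}$ throughout, so the claimed right-hand side is comparable to $(R-|x|)^{\alpha/2}/R^{\alpha/2}$. For the upper bound I use Lemma \ref{lem:upex} together with $t \ge \kappa$ to get $\bP^x(\tau_R>t) \le 2\Big(\frac{(R-|x|)^{\alpha/2}}{\sqrt\kappa} \wedge 1\Big) \asymp (R-|x|)^{\alpha/2}/R^{\alpha/2}$. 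For the lower bound I use that $t \mapsto \bP^x(\tau_R>t)$ is non-increasing, so $\bP^x(\tau_R>t) \ge \bP^x(\tau_R > 2T)$, and then \eqref{abovebelow} at time $2T > T$, together with $\int_{\cB_R} f_R > 0$ and $f_R(x) \gtrsim (R-|x|)^{\alpha/2}$, gives $\bP^x(\tau_R > 2T) \ge c(R-|x|)^{\alpha/2} \asymp (R-|x|)^{\alpha/2}/R^{\alpha/2}$. This closes Regime II. Since the three regimes exhaust $t>0$ and the comparability constant obtained in each depends only on $d,m,\alpha,R$ (uniformly in $x \in \cB_R$), taking the least favourable of the three yields the stated estimate.

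The main obstacle is this middle window: the point is to avoid any circularity by feeding the long-time estimate \eqref{abovebelow} backwards through monotonicity, rather than trying to propagate the short-time bound of Proposition \ref{prop:arg} forward in time (which would only produce the suboptimal exponential rate coming from $\sup_{y}\bP^y(\tau_R > s)$ rather than $\lambda_R$). The only other care needed is elementary bookkeeping: verifying $\sqrt t \wedge R^{\alpha/2} \asymp R^{\alpha/2}$ on $[\kappa,\infty)$, that $e^{-\lambda_R t} \asymp 1$ on bounded time intervals, and that the normalising constant $\int_{\cB_R} f_R$ and the eigenfunction boundary constant depend only on $d,m,\alpha,R$.
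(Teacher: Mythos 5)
Your proposal is correct, and its skeleton is the same as the paper's: short times via Proposition \ref{prop:arg} and Lemma \ref{lem:upex}, large times via the intrinsic ultracontractivity bound \eqref{abovebelow} combined with the boundary behaviour of $f_R$. The differences are in the assembly and in sourcing. The paper is silent about the intermediate time window (it only states that, given the recalled short-time bounds, "exponential domination for large $t$" remains), whereas you close that window explicitly by monotonicity of $t\mapsto\bP^x(\tau_R>t)$ fed back from time $2T$; this is a clean and worthwhile piece of bookkeeping. Second, you import the two-sided estimate $f_R(x)\asymp(R-|x|)^{\alpha/2}$ as a classical fact (boundary Harnack); be careful that it is not literally "explicit" for $m=0$ (the principal eigenfunction in a ball has no closed form), and within the paper's own toolkit the cleanest justification is the one the paper uses: comparing \eqref{abovebelow} with the sharp killed heat kernel estimates \eqref{eq:sharpheatcontr} from \cite{CKS10,CKS12} yields both $f_R(x)\ge C(R-|x|)^{\alpha/2}$ and the matching upper bound, so your external citation can be replaced by an internal two-line argument. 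Third, your large-time upper bound retains the factor $(R-|x|)^{\alpha/2}$ (through the upper bound on $f_R$), which is genuinely needed for the stated product form; the paper's displayed upper estimate bounds $f_R(x)$ by $\Norm{f_R}{\infty}$ and records only $Ce^{-\lambda_R t}$, so on this point your write-up is the more complete one. Finally, you use $\bP^x(\tau_R>t)=\int_{\cB_R}p_{\cB_R}(t,x,y)\,dy$ where the paper integrates the Ikeda-Watanabe representation; both are legitimate.
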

\begin{proof}
Since we have already recalled Lemma \ref{lem:upex} and Proposition \ref{prop:arg}, we only need to prove the exponential
domination for large values of $t>0$. Let $f_R$ be the principal Dirichlet eigenfunction of $L_{m,\alpha}$ for the ball
$\cB_R$ and observe that, by \cite[Prop. 4.289]{LHB}, $f_R$ is continuous and bounded. Since the killed semigroup is IUC,
see Lemma \ref{lem:ultra}, we can choose $T>0$ such that \eqref{abovebelow} holds for every $t \ge 0$ and $x,u \in \cB_R$.
For this fixed $T$, by \cite[Th. 1.1]{CKS10} and \cite[Th. 1.1]{CKS12}, it follows that there exists a constant
$C^{(1)}_{d,m,\alpha,R}>0$ such that for every $t \ge T$ and $x,u \in \cB_R$
\begin{align}\label{eq:sharpheatcontr}
\frac{1}{C^{(1)}_{d,m,\alpha,R}}e^{-\lambda_R t}(R-|x|)^{\frac{\alpha}{2}}(R-|u|)^{\frac{\alpha}{2}}\le p_{\cB_R}(t,x,u)
\le \frac{3}{2}e^{-\lambda_R t}(R-|x|)^{\frac{\alpha}{2}}(R-|u|)^{\frac{\alpha}{2}}
\end{align}
holds. Combining \eqref{abovebelow} and \eqref{eq:sharpheatcontr} we have, for all $x,u \in \cB_R$,
\begin{equation*}
\label{eq:ineqDir}
f_R(x)f_R(u)\ge \frac{2}{3C^{(1)}_{d,m,\alpha,R}}(R-|x|)^{\frac{\alpha}{2}}(R-|u|)^{\frac{\alpha}{2}}.
\end{equation*}
Taking $x=u=0$, the previous inequality gives
\begin{equation}\label{eq:ineqDir1}
f_R(0)^2\ge \frac{2}{3C^{(1)}_{d,m,\alpha,R}}R^\alpha>0.
\end{equation}
Furthermore, choosing $u=0$ in \eqref{eq:ineqDir1} we get
\begin{equation}\label{eq:lowerDir}
f_R(x)\ge \frac{2}{3C^{(1)}_{d,m,\alpha,R}f_R(0)}R^\frac{\alpha}{2}(R-|x|)^{\frac{\alpha}{2}}=:
C^{(2)}_{d,m,\alpha,R}(R-|x|)^{\frac{\alpha}{2}}.
\end{equation}
Finally, by \eqref{abovebelow} and \eqref{eq:lowerDir} we obtain the lower bound
\begin{align*}
1 \ge \bP^x(\tau_R>t)
&=
\int_t^{\infty}\int_{\cB_R^c}\int_{\cB_R}j_{m,\alpha}(|z-u|)p_{\cB_R}(s,x,u)dudzds\\
&\ge
\frac{e^{-\lambda_R t}}{2\lambda_R}\int_{\cB_R^c}\int_{\cB_R}j_{m,\alpha}(|z-u|) f_R(x)f_R(u)dudz\\
&\ge
\frac{C^{(2)}_{d,m,\alpha,R}(R-|x|)^{\frac{\alpha}{2}}e^{-\lambda_R t}}{2\lambda_R}
\int_{\cB_R^c}\int_{\cB_R}j_{m,\alpha}(|z-u|)f_R(u)dudz.
\end{align*}
This guarantees that
\begin{equation*}
\int_{\cB_R^c}\int_{\cB_R}j_{m,\alpha}(|z-u|)f_R(u)dudz<\infty
\end{equation*}
and, at the same time,
\begin{align*}
\bP^x(\tau_R>t)\ge
\frac{C^{(2)}_{d,m,\alpha,R}(R-|x|)^{\frac{\alpha}{2}}e^{-\lambda_R t}}{2\lambda_R}\int_{\cB_R^c}
\int_{\cB_R}j_{m,\alpha}(|z-u|)f_R(u)dudz=:C^{(3)}_{d,m,\alpha,R}(R-|x|)^{\frac{\alpha}{2}}e^{-\lambda_R t},
\end{align*}
for every $x \in \cB_R$ and $t \ge T$. Similarly, we have the estimate from above,
\begin{align*}
\bP^x(\tau_R>t)
&\le  \frac{3}{2}\int_t^{\infty}e^{-\lambda_R s}ds\int_{\cB_R^c}\int_{\cB_R}j_{m,\alpha}(|z-u|)f_R(x)f_R(u)dudz\\
&\le \frac{3 \Norm{f_R}{\infty}}{2\lambda_R}e^{-\lambda_R t}\int_{\cB_R^c}\int_{\cB_R}j_{m,\alpha}(|z-u|)f_R(u)dudz=:
C^{(4)}_{d,m,\alpha,R}e^{-\lambda R t}.
\end{align*}
\end{proof}
Next we derive an upper bound for the function $\bP^x(T_R>t)$. First we need a technical lemma.
\begin{lemma}\label{lem:outball}
There exists a constant $C_{d,m,\alpha}>0$ such that
\begin{equation*}
	\nu_{m,\alpha}(\cB^c_r) \sim C_{d,m,\alpha}r^{-\alpha}, \quad r \downarrow 0.
\end{equation*}
\end{lemma}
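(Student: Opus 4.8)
The plan is to compute the asymptotics of $\nu_{m,\alpha}(\cB_r^c) = \int_{|x|>r} j_{m,\alpha}(|x|)\,dx$ as $r \downarrow 0$ directly from the explicit form of the L\'evy density. Passing to polar coordinates, $\nu_{m,\alpha}(\cB_r^c) = d\omega_d \int_r^\infty j_{m,\alpha}(\rho)\rho^{d-1}\,d\rho$, so the claim reduces to a one-dimensional integral asymptotic. The natural route is to exploit the decomposition \eqref{dec}, writing $j_{m,\alpha} = j_{0,\alpha} - \sigma_{m,\alpha}$. The $j_{0,\alpha}$ part is trivial: $\int_{|x|>r} j_{0,\alpha}(|x|)\,dx = \cA_{d,\alpha}\, d\omega_d \int_r^\infty \rho^{-1-\alpha}\,d\rho = \frac{\cA_{d,\alpha} d\omega_d}{\alpha} r^{-\alpha}$, which is exactly of the asserted order $r^{-\alpha}$. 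So the whole point is to show that the correction term $\int_{|x|>r}\sigma_{m,\alpha}(|x|)\,dx = \Sigma_{m,\alpha}(\cB_r^c) = m - \Sigma_{m,\alpha}(\cB_r)$ is of lower order than $r^{-\alpha}$ as $r \downarrow 0$ — indeed it tends to the finite constant $m$, since $\Sigma_{m,\alpha}$ is a finite measure (stated in the excerpt) and $\Sigma_{m,\alpha}(\cB_r) \to 0$ by dominated convergence. Hence $\nu_{m,\alpha}(\cB_r^c) \sim \frac{\cA_{d,\alpha} d\omega_d}{\alpha} r^{-\alpha}$, and one reads off $C_{d,m,\alpha} = \frac{\cA_{d,\alpha} d\omega_d}{\alpha}$; note that despite the name this constant does not actually depend on $m$.

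First I would record that for $m=0$ the statement is the exact identity $\nu_{0,\alpha}(\cB_r^c) = \frac{\cA_{d,\alpha}d\omega_d}{\alpha} r^{-\alpha}$ with $\cA_{d,\alpha} = \frac{2^\alpha \Gamma((d+\alpha)/2)}{\pi^{d/2}|\Gamma(-\alpha/2)|}$, obtained by the polar-coordinate computation above. Then for $m>0$ I would invoke \eqref{dec} to write
\begin{equation*}
\nu_{m,\alpha}(\cB_r^c) = \nu_{0,\alpha}(\cB_r^c) - \Sigma_{m,\alpha}(\cB_r^c) = \frac{\cA_{d,\alpha}d\omega_d}{\alpha}\, r^{-\alpha} - \big(m - \Sigma_{m,\alpha}(\cB_r)\big),
\end{equation*}
using $\Sigma_{m,\alpha}(\R^d) = m$. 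Finiteness of $\Sigma_{m,\alpha}$ gives $\Sigma_{m,\alpha}(\cB_r) \to 0$ as $r \downarrow 0$, so the bracketed term converges to $m$, which is $o(r^{-\alpha})$. Dividing by $r^{-\alpha}$ and letting $r \downarrow 0$ yields the claimed equivalence with $C_{d,m,\alpha} = \frac{\cA_{d,\alpha}d\omega_d}{\alpha}$.

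An alternative, more self-contained argument avoids \eqref{dec} and works directly with the Bessel function: from $K_\rho(z) \sim \frac{1}{2}\Gamma(\rho)(z/2)^{-\rho}$ as $z \downarrow 0$ (for $\rho = (d+\alpha)/2 > 0$), the density $j_{m,\alpha}(\rho)$ behaves like $\cA_{d,\alpha}\rho^{-d-\alpha}$ near $\rho = 0$, which after integration over $\{|x|>r\}$ again produces the leading term $\frac{\cA_{d,\alpha}d\omega_d}{\alpha}r^{-\alpha}$, the contribution of the region $\{|x| \geq 1\}$, say, being a bounded (hence negligible) constant because $K_\rho$ decays exponentially. Either way the computation is routine. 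The only mildly delicate point — and the step I would flag as the main obstacle, though it is minor — is making the error estimate uniform, i.e. checking carefully that the difference between $j_{m,\alpha}(\rho)\rho^{d-1}$ and $\cA_{d,\alpha}\rho^{-1-\alpha}$ is integrable near $0$ (so that its integral over $(r,\infty)$ stays bounded as $r \downarrow 0$); this follows from the next-order term in the small-$z$ expansion of $K_\rho$, which contributes a density correction of order $\rho^{-d-\alpha+2}$, integrable in dimension-adjusted form since $\alpha < 2$. I would present the proof via \eqref{dec} as the cleanest option, since it reduces everything to the already-established finiteness and total mass of $\Sigma_{m,\alpha}$.
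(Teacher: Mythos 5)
Your proof is correct, but it follows a genuinely different route from the paper's. The paper treats $m=0$ as trivial and, for $m>0$, works directly with the $0+$ asymptotics of the Bessel function: for each $\varepsilon>0$ it picks a threshold $t_0(\varepsilon)$ below which $j_{m,\alpha}(\rho)$ is sandwiched between $(1\pm\varepsilon)$ times the stable density, splits $\nu_{m,\alpha}(\cB_r^c)$ into the integral over $(r,t_0(\varepsilon))$ plus a finite tail $I_2(\varepsilon)$, and passes to the limit --- which is essentially the ``alternative'' argument you sketch at the end, including the uniformity issue you flag (which is the only mildly delicate point in that route). Your main argument instead uses the decomposition \eqref{dec} together with the facts, already recorded in the paper, that $\Sigma_{m,\alpha}$ is a finite positive measure of total mass $m$: then $\nu_{m,\alpha}(\cB_r^c)=\nu_{0,\alpha}(\cB_r^c)-\Sigma_{m,\alpha}(\cB_r^c)$ with $0\le \Sigma_{m,\alpha}(\cB_r^c)\le m$, so the correction is $O(1)$ and a fortiori $o(r^{-\alpha})$ (you do not even need its convergence to $m$, only boundedness). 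This buys you a cleaner proof with no $\varepsilon$-threshold bookkeeping, and an explicit limiting constant $C_{d,m,\alpha}=\frac{\cA_{d,\alpha}\,d\omega_d}{\alpha}$, correctly observed to be independent of $m$; the paper's proof yields the same constant implicitly (since $j_{m,\alpha}$ and $j_{0,\alpha}$ share the leading behaviour at the origin) but does not display it. Both arguments ultimately rest on that same local comparability of the two jump kernels; the paper invokes it via the small-argument expansion of $K_{(d+\alpha)/2}$, while you package it into the stated properties of $\sigma_{m,\alpha}$ and $\Sigma_{m,\alpha}$.
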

\begin{proof}
There is nothing to prove if $m=0$, thus take $m>0$ and for all $\varepsilon>0$ let $t_0(\varepsilon)$ such that
$(1-\varepsilon)C^{(1)}_{d,m,\alpha}\rho^{-d-\alpha}\le j_{m,\alpha}(\rho)\le (1+\varepsilon)C^{(1)}_{d,m,\alpha}
\rho^{-d-\alpha}$ for every $0<\rho<t_0(\varepsilon)$ (note that this holds by the $0+$ asymptotics of the Bessel
function). Consider $r<t_0 (\varepsilon)$ and observe that
\begin{equation*}
\nu_{m,\alpha}(\cB_r^c)
= \int_r^{t_0(\varepsilon)}\rho^{d-1}j_{m,\alpha}(\rho)d\rho+\int_{t_0(\varepsilon)}^{\infty}\rho^{d-1}
j_{m,\alpha}(\rho)d\rho=:I_1(\varepsilon,r)+I_2(\varepsilon).
\end{equation*}
Clearly, $I_2(\varepsilon)<\infty$. Since
\begin{equation*}
(1-\varepsilon)\frac{C^{(1)}_{d,m,\alpha}}{\alpha}r^{-\alpha}-(1-\varepsilon)
\frac{C^{(3)}_{d,m,\alpha}}{\alpha}t_0(\varepsilon)^{-\alpha}\le I_1(\varepsilon,r)
\le
(1+\varepsilon)\frac{C^{(1)}_{d,m,\alpha}}{\alpha}r^{-\alpha}-(1+\varepsilon)
\frac{C^{(3)}_{d,m,\alpha}}{\alpha}t_0(\varepsilon)^{-\alpha},
\end{equation*}
the result follows directly.
\end{proof}
\begin{proposition}
\label{prop:upboundT}
For every $0<R<R_0$ there exists a constant $C_{d,m,\alpha,R,R_0}>0$ such that
\begin{equation}
\label{eq:propunbound}
\bP^x(T_R>t)\le C_{d,m,\alpha,R,R_0}\frac{(|x|-R)^{\alpha/2}}{\sqrt{t}\wedge R^{\alpha/2}},
\quad |x| \in [R,R_0).
\end{equation}
\end{proposition}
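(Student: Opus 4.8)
The plan is to read the left-hand side as the survival probability of the (relativistic) $\alpha$-stable process in the exterior $C^{1,1}$ open set $\cB_R^c$, namely $\bP^x(T_R>t)=\bP^x(\tau_{\cB_R^c}>t)$, and to reduce to small times by monotonicity. By rotational invariance one may take $x=r\mathbf{e}_1$ with $r\in[R,R_0)$; the boundary point $r=R$ is regular, so both sides vanish there, and if $r$ stays bounded away from $R$ the estimate is trivial (with a large constant) because $\bP^x(T_R>t)\le 1$, so only $r$ close to $R$ needs real work. Since $t\mapsto\bP^x(T_R>t)$ is non-increasing, it will be enough to establish
\[
\bP^x(T_R>t)\ \le\ C\,\frac{(|x|-R)^{\alpha/2}}{\sqrt t}\qquad\text{for }0<t\le R^\alpha ,
\]
for then, when $t>R^\alpha$, $\bP^x(T_R>t)\le\bP^x(T_R>R^\alpha)\le C(|x|-R)^{\alpha/2}R^{-\alpha/2}$, and the two cases together yield \eqref{eq:propunbound}.

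To handle $0<t\le R^\alpha$ I would introduce the bounded $C^{1,1}$ (in fact $C^\infty$) open set $\cA:=\cB_{2R_0}\setminus\overline{\cB_R}$, noting that $x\in\cA$ and $\dist(x,\cA^c)=|x|-R$ whenever $|x|\in(R,R_0)$. On the event $\{T_R>t\}$ the process avoids $\overline{\cB_R}$ up to time $t$; either it also remains inside $\cB_{2R_0}$ up to time $t$, so that $\tau_\cA>t$, or it has left $\cA$ before time $t$ and, not having entered $\overline{\cB_R}$, it must have exited through the outer sphere, i.e.\ $X_{\tau_\cA}\notin\cB_{2R_0}$ (the outer sphere being Lebesgue-null). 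Hence
\[
\bP^x(T_R>t)\ \le\ \bP^x(\tau_\cA>t)+\bP^x\!\big(X_{\tau_\cA}\notin\cB_{2R_0}\big).
\]
The first term I would bound exactly as in the proof of Proposition \ref{prop:contrsurvexit}: integrating over $\cA$ the sharp Dirichlet heat kernel estimates of \cite[Th. 1.1]{CKS10} ($m=0$) and \cite[Th. 1.1]{CKS12} ($m>0$) on the bounded $C^{1,1}$ set $\cA$, valid for $t$ up to a threshold which we take $\ge R^\alpha$, and using that the remaining kernel factor integrates to a finite constant over $\R^d$, one gets $\bP^x(\tau_\cA>t)\le C\big(1\wedge(|x|-R)^{\alpha/2}/\sqrt t\big)\le C(|x|-R)^{\alpha/2}/\sqrt t$.

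For the second term I would apply the Ikeda--Watanabe formula \eqref{IW} with $f(t,y,z)=\mathbf 1_{\cB_{2R_0}^c}(z)$, giving $\bP^x(X_{\tau_\cA}\notin\cB_{2R_0})=\int_\cA G_\cA(x,y)\,\nu_{m,\alpha}(\cB_{2R_0}^c-y)\,dy$ with $G_\cA(x,y)=\int_0^\infty p_\cA(t,x,y)\,dt$, and then use the Green function estimates for the bounded $C^{1,1}$ set $\cA$ coming from \cite{CKS10} (resp.\ \cite{CKS12}) in the two forms $G_\cA(x,y)\le C\,g_\alpha(|x-y|)$, with $g_\alpha(s)=s^{\alpha-d}$, $1+|\log s|$, or $1$ according as $d>\alpha$, $d=\alpha$, or $d<\alpha$, and $G_\cA(x,y)\le C\,\dist(x,\cA^c)^{\alpha/2}\dist(y,\cA^c)^{\alpha/2}|x-y|^{-d}$. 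I would split the $y$-integral into $\{|x-y|<|x|-R\}$, $\{|x-y|\ge|x|-R,\ |y|\le\tfrac32 R_0\}$ and $\{|y|>\tfrac32 R_0\}$, estimating: on the first, via $g_\alpha$ and the boundedness of $\nu_{m,\alpha}(\cB_{2R_0}^c-y)$ there; on the second, via $\dist(y,\cA^c)\le\dist(x,\cA^c)+|x-y|\le 2|x-y|$, so that $G_\cA(x,y)\le C(|x|-R)^{\alpha/2}|x-y|^{\alpha/2-d}$ and again $\nu_{m,\alpha}(\cB_{2R_0}^c-y)$ is bounded; on the third, using $|x-y|\ge\tfrac12 R_0$, $\dist(y,\cA^c)=2R_0-|y|$ and $\nu_{m,\alpha}(\cB_{2R_0}^c-y)\le\nu_{m,\alpha}(\cB_{2R_0-|y|}^c)\le C(2R_0-|y|)^{-\alpha}$ from Lemma \ref{lem:outball}. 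Each piece should come out $\le C(|x|-R)^{\alpha/2}$, the resulting radial integrals (of the types $\int\rho^{\alpha-1}d\rho$, $\int\rho^{\alpha/2-1}d\rho$ and $\int u^{-\alpha/2}du$ over bounded intervals) all converging because $0<\alpha/2<1$. Thus $\bP^x(X_{\tau_\cA}\notin\cB_{2R_0})\le C(|x|-R)^{\alpha/2}$, which for $t\le R^\alpha$ is $\le C R^{\alpha/2}(|x|-R)^{\alpha/2}/\sqrt t$.

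Adding the two bounds settles $0<t\le R^\alpha$, and the monotonicity step above extends it to all $t>0$, giving \eqref{eq:propunbound} with $C=C_{d,m,\alpha,R,R_0}$. The delicate point I expect is the bookkeeping in the outward-escape term: one must retain the Dirichlet boundary factor $\dist(y,\cA^c)^{\alpha/2}$ in the Green function estimate near the \emph{outer} sphere of $\cA$ (where $\nu_{m,\alpha}(\cB_{2R_0}^c-y)$ blows up like $(2R_0-|y|)^{-\alpha}$, which is non-integrable for $\alpha\in[1,2)$ without that extra decay), while near the diagonal it is the free-kernel bound $g_\alpha$ that is needed instead; everything else is a routine combination of \eqref{IW}, Lemma \ref{lem:outball}, and the $C^{1,1}$ heat-kernel and Green-function estimates already invoked in this section.
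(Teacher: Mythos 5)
Your proposal is correct, but it takes a genuinely different route from the paper. The paper's proof is a short application of the general barrier machinery for unimodal L\'evy processes: Lemma \ref{lem:Harnack} verifies the hypothesis of \cite[Lem. 6.2]{BGR2015}, which directly yields $\bP^x(T_R>t)\le \frac{5C_d}{\cJ_{m,\alpha}(R)^2}\,\frac{\cV_{m,\alpha}(|x|-R)}{\sqrt{t}\wedge \cV_{m,\alpha}(R)}$, with the positivity of $\cJ_{m,\alpha}(R)=\inf_{0\le r\le R}\nu_{m,\alpha}(\cB_r^c)\cV_{m,\alpha}^2(r)$ guaranteed by Lemmas \ref{lem:harm} and \ref{lem:outball}; the parameter $R_0$ enters only at the very end, when Lemma \ref{lem:harm} is used to replace $\cV_{m,\alpha}(|x|-R)$ and $\cV_{m,\alpha}(R)$ by the corresponding powers on a bounded range. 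You instead argue by hand: the decomposition $\bP^x(T_R>t)\le \bP^x(\tau_{\cA}>t)+\bP^x(X_{\tau_\cA}\notin\cB_{2R_0})$ for the annulus $\cA=\cB_{2R_0}\setminus\overline{\cB}_R$ is valid, the survival term is controlled by integrating the sharp Dirichlet heat kernel bounds of \cite{CKS10,CKS12} exactly as in Proposition \ref{prop:contrsurvexit}, and the outward-escape term by the Ikeda--Watanabe formula together with Green function bounds in which the boundary factor is kept at the outer sphere --- which, as you correctly single out, is what neutralizes the $(2R_0-|y|)^{-\alpha}$ blow-up of $\nu_{m,\alpha}(\cB_{2R_0}^c-y)$ there; the reduction to $t\le R^\alpha$ by monotonicity, and the observation that only $|x|$ near $R$ is non-trivial, are both fine. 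What the paper's route buys is brevity and very light inputs (only the one-dimensional renewal function $\cV_{m,\alpha}$ and Lemma \ref{lem:outball}); what yours buys is a self-contained, explicit argument built from estimates already used elsewhere in this section, at the price of needing the sharp boundary-decay Green function estimates for both processes on $C^{1,1}$ sets, including the low-dimensional cases $d\le\alpha$ --- where the form $G_\cA(x,y)\le C\,\delta_\cA(x)^{\alpha/2}\delta_\cA(y)^{\alpha/2}|x-y|^{-d}$ does still hold (for $d=\alpha$ use $\log(1+u)\le\sqrt{u}$, or obtain it by integrating the heat kernel bounds in time), so this is a heavier citation load rather than a gap.
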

\begin{proof}
Consider the function
\begin{equation*}
\cJ_{m,\alpha}(R)=\inf_{0 \le r \le R}\nu_{m,\alpha}(\cB_r^c)\cV_{m,\alpha}^2(r).
\end{equation*}
Observe that $\nu_{m,\alpha}(\cB_r^c)\cV_{m,\alpha}^2(r)>0$ for every $r>0$. Moreover, by Lemmas
\ref{lem:harm} and \ref{lem:outball} we know that $\nu_{m,\alpha}(\cB_r^c)\cV_{m,\alpha}^2(r) \ge C_{m,\alpha,r_0}>0$
for $r_0>0$ and $r \in (0,r_0)$. This implies $\cJ_{m,\alpha}(R)>0$. Lemma \ref{lem:Harnack} guarantees that
\cite[Lem. 6.2]{BGR2015} applies and we obtain
\begin{equation*}
\bP^x(T_R>t)\le \frac{5C_{d}}{(\cJ(R))^2}\frac{\cV_{m,\alpha}(|x|-R)}{\sqrt{t}\wedge \cV_{m,\alpha}(R)}.
\end{equation*}
Finally, for $|x| \in (R,R_0)$ we can use Lemma \ref{lem:harm} to complete the proof.
\end{proof}
\begin{remark}
{\rm
Note that in case $m=0$, there exists a constant $C_{d,\alpha}>0$ such that $\cJ_{0,\alpha}(R)\ge C_{d,\alpha}$ for
every $R$. This follows from the asymptotic behaviour of $\nu_{0,\alpha}(\cB_r^c)$ as $r \to \infty$ given in
\cite[Cor. 2.1]{AL}. Thus for the massless case \eqref{eq:propunbound} holds for all $|x| \ge R$, with no dependence
on $R_0$. On the other hand, for $m>0$ we have $\lim_{R \to \infty}\cJ_{m,\alpha}(R)=0$. This is due to $\cV_{m,\alpha}
(R) \sim R$ as $R \to \infty$, as seen in Remark \ref{rmk:asymp}, while $\overline{\nu}_{m,\alpha}(\cB_R^c)$ decays
exponentially (see \cite[Cor. 2.2]{AL}).
}
\end{remark}

\subsection{Estimates on the moment generating function for the exit time from a ball}
In view of deriving and using expressions of the type \eqref{forBM} in our main analysis below, in this section first
we derive estimates of exponentials of exit times of the L\'evy processes $\pro{X}$ for balls and their complements.
Recall \eqref{firstex} and denote by
\begin{equation}
\label{pdfoftau}
g_{\tau_R}(t)=\int_{\cB_R^c}\int_{\cB_R}j_{m,\alpha}(|z-u|)p_{\cB_R}(t,x,u)dudz, \quad t>0,
\end{equation}
the probability density of $\tau_R$.
Now we prove the following estimate for the moment generating function of $\tau_R$.
\begin{theorem}\label{thm:CompMomGenFun}
Fix $R>0$. Then for every $0 \le \lambda<\lambda_R$ and $x \in \cB_R$ we have
\begin{equation*}
\E^x[e^{\lambda \tau_R}-1]\asymp \frac{\lambda}{\lambda_R-\lambda}\left(\frac{R-|x|}{R}\right)^{\alpha/2},
\end{equation*}
where the comparability constant depends on $d,m,\alpha,R$. Moreover, $\E^x[e^{\lambda \tau_R}]=\infty$ whenever
$\lambda \ge \lambda_R$.
\end{theorem}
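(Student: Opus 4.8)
The plan is to reduce the statement to the two-sided survival probability estimate of Proposition~\ref{prop:contrsurvexit} via the layer-cake identity. Writing $\delta:=R-|x|\in(0,R]$ and using $e^{\lambda\tau_R}-1=\lambda\int_0^\infty e^{\lambda t}\mathbf{1}_{\{t<\tau_R\}}\,dt$, Tonelli's theorem gives
\[
\E^x[e^{\lambda\tau_R}-1]=\lambda\int_0^\infty e^{\lambda t}\,\bP^x(\tau_R>t)\,dt
\]
for every $\lambda>0$ (both sides possibly infinite). Inserting the bound $\bP^x(\tau_R>t)\asymp e^{-\lambda_R t}\left(\tfrac{\delta^{\alpha/2}}{\sqrt t\wedge R^{\alpha/2}}\wedge 1\right)$ of Proposition~\ref{prop:contrsurvexit}, whose comparability constants depend only on $d,m,\alpha,R$, the problem becomes the evaluation, up to constants of the same type, of
\[
J(\lambda):=\int_0^\infty e^{(\lambda-\lambda_R)t}\left(\frac{\delta^{\alpha/2}}{\sqrt t\wedge R^{\alpha/2}}\wedge 1\right)dt .
\]

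For $0\le\lambda<\lambda_R$ I would split $J(\lambda)=I_1+I_2+I_3$ at the thresholds $\delta^\alpha\le R^\alpha$ (the inequality holding since $\delta\le R$): on $(0,\delta^\alpha)$ the bracket equals $1$, on $(\delta^\alpha,R^\alpha)$ it equals $\delta^{\alpha/2}t^{-1/2}$, and on $(R^\alpha,\infty)$ it equals $(\delta/R)^{\alpha/2}$. Since $e^{-\lambda_R R^\alpha}\le e^{(\lambda-\lambda_R)t}\le 1$ on the two bounded intervals, the ``boundary-layer'' pieces obey $I_1\le\delta^\alpha$ and $I_2\le 2R^{\alpha/2}\delta^{\alpha/2}$, hence $I_1+I_2\le 3R^\alpha(\delta/R)^{\alpha/2}$; the tail is exact, $I_3=(\delta/R)^{\alpha/2}e^{(\lambda-\lambda_R)R^\alpha}/(\lambda_R-\lambda)$, and since $e^{(\lambda-\lambda_R)R^\alpha}\in[e^{-\lambda_R R^\alpha},1)$ we get $I_3\asymp\tfrac{1}{\lambda_R-\lambda}(\delta/R)^{\alpha/2}$. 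Corollary~\ref{cor:eigcontr} yields $R^\alpha\le C_{d,m,\alpha}\lambda_R^{-1}\le C_{d,m,\alpha}(\lambda_R-\lambda)^{-1}$, so $I_1+I_2$ is at most a constant (depending only on the bounded quantity $\lambda_R R^\alpha$) times $I_3$; therefore $J(\lambda)\asymp\tfrac{1}{\lambda_R-\lambda}(\delta/R)^{\alpha/2}$, and multiplying by $\lambda$ gives the asserted comparability with constant depending on $d,m,\alpha,R$ only.

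For the divergence claim the same identity applies: if $\lambda\ge\lambda_R>0$ and $x\in\cB_R$ (so $\delta>0$), then by the lower bound in Proposition~\ref{prop:contrsurvexit} one has $\bP^x(\tau_R>t)\ge c_{d,m,\alpha,R}\,e^{-\lambda_R t}(\delta/R)^{\alpha/2}$ for all $t\ge R^\alpha$, whence
\[
\E^x[e^{\lambda\tau_R}]\ge 1+\lambda\,c_{d,m,\alpha,R}(\delta/R)^{\alpha/2}\int_{R^\alpha}^\infty e^{(\lambda-\lambda_R)t}\,dt=+\infty .
\]
All the integral computations are routine; the one delicate point is the bookkeeping of the comparability constants, and specifically the use of Corollary~\ref{cor:eigcontr} to keep $\lambda_R R^\alpha$ — hence the factors $e^{(\lambda-\lambda_R)R^\alpha}$ and the ratio $R^\alpha/(\lambda_R-\lambda)$ — controlled uniformly in $\lambda\in[0,\lambda_R)$, which is exactly what makes the boundary-layer contributions $I_1,I_2$ negligible against the principal term $I_3$.
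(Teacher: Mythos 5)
Your proof is correct and follows essentially the same route as the paper: both reduce the claim to the two-sided survival-probability bound of Proposition \ref{prop:contrsurvexit} via the identity $\E^x[e^{\lambda\tau_R}-1]=\lambda\int_0^\infty e^{\lambda t}\,\bP^x(\tau_R>t)\,dt$, and both invoke Corollary \ref{cor:eigcontr} to keep $\lambda_R R^\alpha$ bounded so that the boundary-layer contributions are dominated by the tail term $\frac{1}{\lambda_R-\lambda}\left(\frac{R-|x|}{R}\right)^{\alpha/2}$. The only (harmless) difference is that you get this identity by Tonelli applied to the pathwise formula rather than by integration by parts against the density $g_{\tau_R}$ with a vanishing boundary term, which also lets you handle the divergence statement for $\lambda\ge\lambda_R$ directly, without the paper's truncation-and-supremum argument.
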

\begin{proof}
First fix $0 \le \lambda < \lambda_R$. Using \eqref{pdfoftau} and integrating by parts we obtain
\begin{equation}
\label{eq:momexitpass1}
\E^x[e^{\lambda \tau_R}-1]=\int_0^{\infty}(e^{\lambda t}-1)g_{\tau_R}(t)dt=
-\lim_{s \to \infty}(e^{\lambda s}-1)\bP^x(\tau_R>s)+\lambda\int_0^{\infty}e^{\lambda t}\bP^x(\tau_R>t)dt.
\end{equation}
Note that the limit is zero since by Proposition \ref{prop:contrsurvexit}
\begin{equation*}
e^{\lambda s}\bP^x(\tau_R>s)\le C^{(1)}_{d,m,\alpha,R} \, e^{(\lambda-\lambda_R)s}
\frac{(R-|x|)^{\alpha/2}}{\sqrt{s}\wedge R^{\alpha/2}},
\end{equation*}
and $\lambda<\lambda_R$.

First we show the lower bound of the remaining integral at the right hand side of \eqref{eq:momexitpass1}.
Using Proposition \ref{prop:contrsurvexit} again, we get
\begin{align}\label{eq:momexitpass2}
\begin{split}
\int_0^{\infty}e^{\lambda t}\bP^x(\tau_R>t)dt
&\ge
C^{(2)}_{d,m,\alpha,R}\left(\frac{R-|x|}{R}\right)^{\alpha/2}\int_{R^\alpha}^{\infty}e^{-(\lambda_R-\lambda)t}dt\\
&\ge
C^{(2)}_{d,m,\alpha,R}\left(\frac{R-|x|}{R}\right)^{\alpha/2}\frac{1}{\lambda_R-\lambda}e^{-\lambda_RR^\alpha}.
\end{split}
\end{align}
Next note that by Corollary \ref{cor:eigcontr} we have $\lambda_R R^\alpha\le  C^{(3)}_{d,m,\alpha}$ with a
constant $C^{(3)}_{d,m,\alpha}$, thus $e^{-\lambda_R R^\alpha}\ge C^{(4)}_{d,m,\alpha}$. Using this lower bound
in \eqref{eq:momexitpass2} we get
\begin{equation*}
\int_0^{\infty}e^{\lambda t}\bP^x(\tau_R>t)dt\ge C^{(5)}_{d,m,\alpha,R}\frac{1}{\lambda_R-\lambda}
\left(\frac{R-|x|}{R}\right)^{\alpha/2}.
\end{equation*}

To get the upper bound, we estimate
\begin{align*}
\begin{split}
\int_0^{\infty}
& e^{\lambda t}\bP^x(\tau_R>t)dt\le C^{(1)}_{d,m,\alpha,R}\int_0^{\infty}e^{-(\lambda_R-\lambda)t}
\Big(1 \wedge \frac{(R-|x|)^{\alpha/2}}{\sqrt{t}\wedge R^{\alpha/2}}\Big)dt\\
&=
C^{(1)}_{d,m,\alpha,R}\left(\int_0^{R^{\alpha/2}}\Big(1 \wedge \frac{(R-|x|)^{\alpha/2}}{\sqrt{t}}\Big)
e^{-(\lambda_R-\lambda)t}dt
+ \Big(\frac{R-|x|}{R}\Big)^{\alpha/2}\frac{e^{-(\lambda_R-\lambda)R^\alpha}}{\lambda_R-\lambda}\right)\\
&\le
C^{(1)}_{d,m,\alpha,R}\left((R-|x|)^{\alpha/2}\int_0^{R^\alpha}\frac{e^{-(\lambda_R-\lambda)t}}{\sqrt{t}}dt+
\Big(\frac{R-|x|}{R}\Big)^{\alpha/2}\frac{1}{\lambda_R-\lambda}\,\right)\\
&=
C^{(1)}_{d,m,\alpha,R}\left((R-|x|)^{\alpha/2}\left(2R^{\alpha/2}e^{-(\lambda_R-\lambda)R^\alpha}
+2(\lambda_R-\lambda)\int_0^{R^\alpha}\sqrt{t}e^{-(\lambda_R-\lambda)t}dt\right)\right.\\
&\left.\qquad+
\left(\frac{R-|x|}{R}\right)^{\alpha/2}\frac{1}{\lambda_R-\lambda}\right)\\
&\le
C^{(1)}_{d,m,\alpha,R}\left(4(R-|x|)^{\alpha/2}R^{\alpha/2}+\left(\frac{R-|x|}{R}\right)^{\alpha/2}
\frac{1}{\lambda_R-\lambda}\right)\\
&\le
C^{(1)}_{d,m,\alpha,R}\left(4R^\alpha \left(\frac{R-|x|}{R}\right)^{\alpha/2}
\frac{\lambda_R}{\lambda_R-\lambda}+\left(\frac{R-|x|}{R}\right)^{\alpha/2}\frac{1}{\lambda_R-\lambda}\right)
\le
\frac{C^{(5)}_{d,m,\alpha,R}}{\lambda_R-\lambda}\left(\frac{R-|x|}{R}\right)^{\alpha/2},
\end{split}
\end{align*}
where we used the bound $\lambda_R R^\alpha \le  C^{(3)}_{d,m,\alpha}$ again in the last line. This proves the
first part of the claim.

To obtain the second statement we only need to prove that $\E[e^{\lambda_R \tau_R}]=\infty$. Notice that by
Proposition \ref{prop:contrsurvexit}
\begin{equation*}
e^{\lambda_R s}\bP^x(\tau_R>s)\le C^{(1)}_{d,m,\alpha,R} \, \frac{(R-|x|)^{\alpha/2}}{\sqrt{s}\wedge R^{\alpha/2}}.
\end{equation*}
For $s>R^\alpha$ we get
\begin{align*}
\E^x[e^{\lambda_R \tau_R}] &\ge \E^x[e^{\lambda_R \tau_R}-1; \tau_R \le s] = \int_0^s(e^{\lambda t}-1)g_{\tau_R}(t)dt\\
&=
-(e^{\lambda_R s}-1)\bP^x(\tau_R>s)+\lambda_R\int_0^s e^{\lambda_R t}\bP^x(\tau_R>t)dt\\
&\ge
-C^{(1)}_{d,m,\alpha,R}\frac{(R-|x|)^{\alpha/2}}{R^{\alpha/2}}+\lambda_R\int_{R^\alpha}^s e^{\lambda_R t}
\bP^x(\tau_R>t)dt.
\end{align*}
Taking the supremum over $s$ on the right-hand side and using the lower bound in Proposition \ref{prop:contrsurvexit},
we obtain
\begin{align*}
\E^x[e^{\lambda_R \tau_R}] &\ge -C^{(1)}_{d,m,\alpha,R}\frac{(R-|x|)^{\alpha/2}}{R^{\alpha/2}}
+\lambda_R\int_{R^\alpha}^{\infty} e^{\lambda_R t}\bP^x(\tau_R>t)dt\\
&\ge
-C^{(1)}_{d,m,\alpha,R}\frac{(R-|x|)^{\alpha/2}}{R^{\alpha/2}}+C^{(2)}_{d,m,\alpha,R}\lambda_R
\int_{R^\alpha}^{\infty}\frac{(R-|x|)^{\alpha/2}}{R^{\alpha/2}}dt=\infty.
\end{align*}
\end{proof}

\subsection{Estimates on the Laplace transform of the hitting time for a ball}

Next we consider $T_R=\inf\{t>0: \ X_t \in \cB_R\}$ and derive estimates on the Laplace transform
$\E^x[e^{-\lambda T_R}]$, in which case there is no handy tool such as intrinsic ultracontractivity of the
killed semigroup. We start with a lower bound for points in domains of the type $R\le |x| \le R'$, for the
remaining choices of domains see Remark \ref{remcontr1} (2) below.
\begin{theorem}\label{thm:lowboundLapundercrit}
Let $\lambda, \, R>0$ and $R_2>R_1>R$. There exists a constant
$C_{d,m,\alpha,R_1,R_2,R,\lambda}>0$ such that
\begin{equation*}
\E^x[e^{-\lambda T_R}]\ge C_{d,m,\alpha,R_1,R_2,R,\lambda} \; j_{m,\alpha}(|x|), \quad R_1 \le |x|\le R_2.
\end{equation*}
\end{theorem}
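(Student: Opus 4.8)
The plan is to bound $\E^x[e^{-\lambda T_R}]$ from below by restricting the expectation to the event that the process makes a single jump directly into the ball $\cB_R$ within a bounded time window, and then estimating the probability and the associated Laplace weight of that event from below. Concretely, I would write, using the strong Markov property and a localization to some convenient bounded domain, something like
\begin{equation*}
\E^x[e^{-\lambda T_R}]\ge \E^x\big[e^{-\lambda T_R}; T_R\le T_{R_2}\wedge 1\big]
\ge e^{-\lambda}\,\bP^x\big(X_{\tau_{\cD}}\in \cB_R,\ \tau_{\cD}\le 1\big),
\end{equation*}
where $\cD$ is an annular (or ball) domain containing $x$ but whose complement meets $\cB_R$; a natural choice is $\cD=\cB_{R_2}\setminus \overline{\cB}_R$ or simply $\cD=\cB_{R_2}$ intersected appropriately, so that a jump from a point $u$ near $x$ landing in $\cB_R$ is an exit from $\cD$. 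One then applies the Ikeda--Watanabe formula \eqref{IW} (or its time-integrated version) to express $\bP^x(X_{\tau_\cD}\in\cB_R,\ \tau_\cD\le 1)$ as
\begin{equation*}
\int_0^1\int_{\cD}\int_{\cB_R} p_\cD(t,x,u)\, j_{m,\alpha}(|z-u|)\,dz\,du\,dt .
\end{equation*}

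The key inputs are then: (i) a lower bound on the survival probability / heat kernel of the killed process, which gives $\int_0^1 p_\cD(t,x,u)\,dt \ge c\,(\text{something positive})$ uniformly for $u$ in a fixed subregion of $\cD$ — here I would invoke the IUC estimates of Proposition~\ref{prop:contrsurvexit} and the two-sided heat-kernel bounds \eqref{abovebelow}, \eqref{eq:sharpheatcontr}, noting that the dependence on $|x|-R$ only helps (it is bounded below and above since $R_1\le|x|\le R_2$); and (ii) a lower bound on $\int_{\cB_R} j_{m,\alpha}(|z-u|)\,dz$. For (ii) the point is that $j_{m,\alpha}$ is a fixed strictly positive continuous function away from the origin, so for $u$ ranging over a compact set at bounded distance from $\cB_R$ this integral is bounded below by a positive constant; and crucially, to get the factor $j_{m,\alpha}(|x|)$ on the right-hand side one compares $j_{m,\alpha}(|z-u|)$ with $j_{m,\alpha}(|x|)$ using that $|z-u|\le |x|+C$ on the relevant range together with monotonicity and the doubling-type/asymptotic behaviour of $j_{m,\alpha}$ recorded in Section~2.1 (polynomial decay for $m=0$, $|x|^{-(d+\alpha+1)/2}e^{-m^{1/\alpha}|x|}$ for $m>0$). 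Since $|x|$ itself is confined to $[R_1,R_2]$, in fact $j_{m,\alpha}(|x|)$ is comparable to a constant on this range, so the inequality is essentially "$\E^x[e^{-\lambda T_R}]\ge$ const", dressed up in the form demanded by the statement for consistency with the outside regime; I would phrase the comparison so the constant absorbs $\inf_{R_1\le r\le R_2} j_{m,\alpha}(r)^{-1}$.

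Assembling these, one gets $\E^x[e^{-\lambda T_R}]\ge e^{-\lambda}\cdot c_1 \cdot c_2 \ge C_{d,m,\alpha,R_1,R_2,R,\lambda}\, j_{m,\alpha}(|x|)$, with all constants tracked through Proposition~\ref{prop:contrsurvexit}, Lemma~\ref{lem:outball}, and the explicit form of $j_{m,\alpha}$. The main obstacle I anticipate is the bookkeeping around the choice of the localizing domain $\cD$: it must simultaneously contain the shell $\{R_1\le|x|\le R_2\}$, have its complement overlap $\cB_R$ so that a single jump realises the hitting event, and be regular enough (bounded Lipschitz / $C^{1,1}$) that Corollary~\ref{lem:ultra} and the heat-kernel estimates apply; one convenient device is to take $\cD$ to be a ball $\cB_{R_2}(x_0)$ for a suitable center $x_0$ on the segment between $x$ and the origin, or to work on $\cB_{R_3}$ for $R_3>R_2$ and use that $\cB_R\subset \cB_{R_3}$ is not in $\cD^c$ — so instead one localizes to $\cD = \cB_{R_3}\setminus\overline{\cB_R}$ and checks this annulus is $C^{1,1}$. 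Keeping the uniformity of constants over $|x|\in[R_1,R_2]$ while doing this is the only genuinely delicate point; everything else is a direct application of the cited heat-kernel and Lévy-measure estimates.
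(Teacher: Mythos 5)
Your skeleton is the same as the paper's: reduce the hitting event to a single jump from an annular localization $\cD$ into the ball, represent it by the Ikeda--Watanabe formula \eqref{IW}, and lower-bound the resulting integral using killed heat kernel estimates together with monotonicity of $j_{m,\alpha}$ (the paper takes $\cD=\cB_{3R_2}\setminus\bar\cB_R$, restricts to jumps from $A(x)$ into $\cB_{R/2}$, and compares $j_{m,\alpha}(|u-z|)\ge j_{m,\alpha}(|x|+\tfrac52 R)\ge C\,j_{m,\alpha}(|x|)$; your observation that $j_{m,\alpha}(|x|)$ is comparable to a constant on $R_1\le|x|\le R_2$ is also implicitly how the paper's constant $C^{(1)}_{d,m,\alpha,R_1,R_2}$ arises). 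However, as written your key step (i) has a genuine gap: you restrict to the event $\{\tau_\cD\le 1\}$ and then invoke \eqref{abovebelow} and \eqref{eq:sharpheatcontr} to bound $\int_0^1 p_\cD(t,x,u)\,dt$ from below, but those estimates are only stated (and only hold in the form given) for $t>T$ with a possibly large $T$ coming from intrinsic ultracontractivity; they say nothing about small times, so they cannot produce a lower bound on your time-window integral. The paper avoids this precisely by restricting to $\{T_R>T\}$ and integrating $e^{-\lambda t}$ over $[T,\infty)$, paying only a harmless factor $e^{-(\lambda+\lambda_\cD)T}$; your argument needs the same shift of the time window (or small-time two-sided kernel bounds from \cite{CKS10,CKS12}, which the paper does not record).

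The second, more substantial omission is that after applying \eqref{abovebelow} on $\cD$ the integrand is $f_\cD(x)f_\cD(u)$, and one still needs a quantitative positive lower bound on $f_\cD$ at the starting point $x$ (and on $\int_{A(x)}f_\cD(u)\,du$). This is the crux of the paper's proof: it first gets the upper factorization $p_\cD(t,x,u)\le C e^{-\lambda_\cD}\delta_\cD^{\alpha/2}(x)\delta_\cD^{\alpha/2}(u)$ from \cite{CKS10,CKS12}, deduces $f_\cD(x)\ge C\,\bP^x(\tau_\cD>T)$, and then bounds the survival probability below by Proposition \ref{prop:contrsurvexit} applied to a ball inscribed in the annulus. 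Your citation of Proposition \ref{prop:contrsurvexit} alone does not do this job, since it concerns survival in balls and not the killed kernel (or eigenfunction) of the annulus. For the qualitative statement a softer route is available --- $f_\cD$ is continuous and strictly positive on $\cD$, and $\{R_1\le|x|\le R_2\}$ is a compact subset of $\cD$ provided you take the outer radius strictly larger than $R_2$ (note that your choice $\cD=\cB_{R_2}\setminus\overline{\cB}_R$ excludes the admissible points with $|x|=R_2$) --- but you should say this explicitly rather than lean on estimates that do not apply in the time regime you chose. With the time window moved past $T$ and either the compactness argument or the paper's survival-probability argument supplying the lower bound on $f_\cD$, your proof closes and is essentially the published one.
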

\begin{proof}
Define
\begin{equation*}
C^{(1)}_{d,m,\alpha,R_1,R_2}
=\min_{R_1 \leq |x| \leq R_2}\frac{j_{m,\alpha}\left(|x|+\frac{5}{2}R\right)}{j_{m,\alpha}(|x|)}.
\end{equation*}
As before, fix $x=r \mathbf{e}_1$ for $r>0$, and define
$A(x)=\big\{ u \in \R^d: \ |x|+R <|u|<|x|+2R, \ \langle u, \mathbf{e}_1\rangle < 0 \big\}$.
Since $R_1 \le |x| \le R_2$, taking $\cD=\cB_{3R_2}\setminus {\bar\cB_{R}}$ we see that $x \in \cD \subset
{\bar\cB_{R}}^c$. In particular, $p_{\cB_R^c}(t,x,u)\ge p_{\cD}(t,x,u)$. Since $\cD$ is a bounded and open
Lipschitz set, the semigroup with kernel $p_{\cD}(t,x,u)$ is IUC and we can apply to it the lower bound
\eqref{abovebelow} with some $T>0$, and the principal Dirichlet eigenvalue and eigenfunction $\lambda_\cD$
and $f_\cD$ of $L_{m,\alpha}$ on $\cD$. Then by using the Ikeda-Watanabe formula we get
\begin{align}\label{eq:low}
\begin{split}
\E^x[e^{-\lambda T_R}]
&\ge
\E^x\Big[e^{-\lambda T_R}; X_{T_R-}\in A(x), \ |X_{T_R}|<\frac{R}{2}, \ T_R>T\Big]\\
&=
\int_T^{\infty}\int_{\cB_{R/2}}\int_{A(x)}e^{-\lambda t}j(|u-z|)p_{\cB_R^c}(t,x,u)dtdzdu\\
&\ge
\frac{R^d \omega_d}{2^{d+1}} j\Big(|x|+\frac{5}{2}R\Big)\int_T^{\infty}\int_{A(x)}e^{-(\lambda+\lambda_\cD) t}
f_\cD(x)f_\cD(u)dtdu\\
&\ge
\frac{C^{(1)}_{d,m,\alpha,R_1,R_2}R^d \omega_de^{-(\lambda+\lambda_\cD)T}}{(\lambda+\lambda_\cD)2^{d+1}}
j\left(|x|\right)f_\cD(x) \int_{A(x)}f_\cD(u)du.
\end{split}
\end{align}
Note that since $\cD$ is a bounded $C^{1,1}$ domain, by \cite[Th. 1.1]{CKS10} and \cite[Th. 1.1]{CKS12}
there exists a constant $C^{(2)}_{d,m,\alpha,R_1,R_2}>1$ such that for every $t \ge 1$
\begin{equation*}
p_\cD(t,x,u)\le C^{(2)}_{d,m,\alpha,R_1,R_2}e^{-\lambda_\cD}\delta_\cD^{\alpha/2}(x)\delta_\cD^{\alpha/2}(u),
\end{equation*}
holds, where $\delta_\cD(x)={\rm dist}(x,\partial \cD)$. By definition of $f_\cD(x)$ we get
\begin{align}\label{eq:esteigen}
\begin{split}
f_\cD(x)
&=e^{\lambda_\cD}\int_{\cD}p_\cD(1,x,u)f_\cD(u)du\\
&\le C^{(2)}_{d,m,\alpha,R_1,R_2} \Norm{f_\cD}{L^\infty(\cD)}\delta_\cD^{\alpha/2}(x)
\int_{\cD}\delta_\cD^{\alpha/2}(u)du\\
&\le C^{(2)}_{d,m,\alpha,R_1,R_2} \Norm{f_\cD}{L^\infty(\cD)}(6R_2)^{\alpha/2}((3R_2)^d-R_1^d)
\omega_d\delta_\cD^{\alpha/2}(x).
\end{split}
\end{align}
To obtain a lower bound on $f_\cD(x)$, consider $\tau_\cD=\inf\{t>0: \ X_t \in {\bar \cD}^c\}$ and use again
\eqref{IW}, \eqref{eq:esteigen} and the fact that $\delta_\cD(u)\le |u-z|$ for all $z \in \cD^c$, giving
\begin{align*}
\bP^x(\tau_\cD >T)
&=
\int_T^{\infty}\int_{\cD^c}\int_{\cD}j_{m,\alpha}(|z-u|)p(t,x,u)dudzdt\\
&\le
\frac{3}{2}f_\cD(x)\int_T^{\infty}\int_{\cD^c}\int_{\cD}j_{m,\alpha}(|z-u|)e^{-\lambda_\cD t}f_\cD(u)dudzdt\\
&\le \frac{3}{2}f_\cD(x)C^{(2)}_{d,m,\alpha,R_1,R_2} \Norm{f_\cD}{L^\infty(\cD)}(6R_2)^{\alpha/2}
((3R_2)^d-R_1^d)\omega_d\\
&\qquad \times \int_T^{\infty}\int_{\cD^c}\int_{\cD}j_{m,\alpha}(|z-u|)e^{-\lambda_\cD t}\delta_\cD(u)dudzdt\\
&\le C^{(4)}_{d,\alpha}\frac{3e^{-\lambda_\cD T}}{2\lambda_\cD}f_\cD(x)C^{(2)}_{d,m,\alpha,R_1,R_2}
\Norm{f_\cD}{L^\infty(\cD)}(6R_2)^{\alpha/2}((3R_2)^d-R_1^d)\omega_d\\
&\qquad \times \int_{\cD^c}\int_{\cD}\frac{dudz}{|z-u|^{d+\frac{\alpha}{2}}}\\
&\le
\frac{3\Per_{\alpha}(\cD)}{2}C^{(3)}_{d,\alpha} C^{(2)}_{d,m,\alpha,R_1,R_2} \Norm{f_\cD}{L^\infty(\cD)}(6R_2)^{\alpha/2}
((3R_2)^d-R_1^d)\omega_d\frac{e^{-\lambda_\cD T}}{\lambda_D}f_\cD(x),
\end{align*}
where
$\Per_\alpha(\cD)=\int_{\cD}\int_{\cD^c}\frac{dzdu}{|z-u|^{d+\frac{\alpha}{2}}}$
is the fractional perimeter of $\cD$ (see, e.g., \cite{FMM11}), and we used that $j_{m,\alpha}(|z-u|)\le
j_{0,\alpha}(|z-u|)=C^{(3)}_{d,\alpha}|z-u|^{-d-\alpha}$ by \eqref{dec}, see \cite[Lem. 2]{R02}. Hence
$f_\cD(x)\ge C^{(4)}_{d,m,\alpha,R_1,R_2}\bP^x(\tau_\cD >T)$,
where
\begin{equation*}
C^{(4)}_{d,m,\alpha,R_1,R_2}=\frac{2\lambda_\cD e^{\lambda_\cD T}}{3C^{(3)}_{d,\alpha}
\Per_\alpha(\cD)C^{(2)}_{d,m,\alpha,R_1,R_2} \Norm{f_\cD}{L^\infty(\cD)}(6R_2)^{\alpha/2}((3R_2)^d-R_1^d)\omega_d}.
\end{equation*}
Note that $\cD$ is a $C^{1,1}$ bounded set with scaling radius $R_3=(3R_2+R)/2$. Fix $x \in \cD$. Then there exists a
point $\bar{x} \in \cD$ and a ball $\cB_{R_3}(\bar{x})$ such that $x \in \cB_{R_3}(\bar{x})$ and $\delta_\cD(x)=
R_3-|x-\bar{x}|$. By Proposition \ref{prop:contrsurvexit} and the fact that $\cB_{R_3}(\bar{x})\subset \cD$, we know
that there exists a constant $C^{(5)}_{d,m,\alpha,R_1,R_2}$ such that
\begin{equation*}
	\bP^x(\tau_\cD>T)
	\ge \bP^x(\tau_{\cB_{R_3}(\bar{x})}>T)=\bP^{x-\bar{x}}(\tau_{R_3}>T) \ge
	C^{(5)}_{d,m,\alpha,R_1,R_2} e^{-\lambda_\cD T}
	\Big(\frac{\delta_\cD(x)^{\alpha/2}}{\sqrt{T}\wedge R_3^{\alpha/2}}\wedge 1\Big),
\end{equation*}
and then
\begin{equation*}
f_\cD(x)\ge
C^{(6)}_{d,m,\alpha,R_1,R_2}\Big(\frac{\delta_\cD(x)^{\alpha/2}}{\sqrt{T}\wedge R_3^{\alpha/2}} \wedge 1\Big),
\end{equation*}
where $C^{(6)}_{d,m,\alpha,R_1,R_2}=C^{(5)}_{d,m,\alpha,R_1,R_2}C^{(5)}_{d,m,\alpha,R_1,R_2} e^{-\lambda_\cD T}$.
Applying this to \eqref{eq:low} we have
\begin{align*}
\begin{split}
\E^x[e^{-\lambda T_R}]&\ge C^{(7)}_{d,m,\alpha,R_1,R_2,R,\lambda}
\Big(\frac{\delta_\cD(x)^{{\alpha/2}}}{\sqrt{T}\wedge R_3^{\alpha/2}}\wedge 1\Big)j_{m,\alpha}\left(|x|\right)\int_{A(x)}
\Big(\frac{\delta_\cD(u)^{{\alpha/2}}}{\sqrt{T}\wedge R_3^{\alpha/2}}\wedge 1\Big)du,
\end{split}
\end{align*}
where
\begin{equation*}
C^{(7)}_{d,m,\alpha,R_1,R_2,R,\lambda}=\frac{C^{(1)}_{d,m,\alpha}R^d\omega_d}{(\lambda+\lambda_\cD)2^{d+1}}
e^{-(\lambda+\lambda_\cD)T}(C^{(6)}_{d,m,\alpha,R_1,R_2,R})^2.
\end{equation*}
Recall that $\min_{R_1\le |x|\le R_2}\delta_\cD(x)=(C^{(8)}_{R_1,R_2,R})^{\frac{2}{\alpha}}>0$ by definition of $\cD$.
Moreover, $u \in A(x)$ implies $R<R_1+R\le |u|\le R_2+2R<3R_2$, and hence $\min_{u \in A(x)}\delta_\cD(u) \ge
\min_{R_1+R\le |u|\le 2R+R_2}\delta_\cD(u)=(C^{(9)}_{R_1,R_2,R})^{\alpha/2}>0$. Finally, recall also that
$|A(x)|\ge \frac{\omega_d}{2}d(R_1+R)^{d-1}R$
to conclude that
\begin{equation*}
\E^x[e^{-\lambda T_R}]\ge C_{d,m,\alpha,R_1,R_2,R,\lambda} \, j_{m,\alpha}(|x|),
\end{equation*}
where
\begin{equation*}
C_{d,m,\alpha,R_1,R_2,R,\lambda}=C^{(7)}_{d,m,\alpha,R_1,R_2,R,\lambda}\Big(\frac{
C^{(8)}_{R_1,R_2,R}}{\sqrt{T}\wedge R_3^{\alpha/2}}\wedge 1\Big)\Big(\frac{
C^{(9)}_{R_1,R_2,R}}{\sqrt{T}\wedge R_3^{\alpha/2}}\wedge 1\Big)\frac{dR\omega_d}{2}(R_1+R)^{d-1}.
\end{equation*}
\end{proof}
To extend the lower bound up to the boundary of $\cB_R$, we need the following result.
\begin{proposition}\label{prop:halflower}
The following properties hold:
\begin{enumerate}
\item
There exist $R^{(0)}_{d,m,\alpha,R,\lambda}>R$ and $C_{d,m,\alpha,R,\lambda}>0$ such that for every $R \le |x| \le
R^{(0)}_{d,m,\alpha,R,\lambda}$
\begin{equation*}
\E^x[1-e^{-\lambda T_R}]\le C_{d,m,\alpha,R,\lambda}(|x|-R)^{\alpha/2}.
\end{equation*}
\item
There exists $\widetilde{R}_{d,m,\alpha,R,\lambda}>R$ such that for every $R \le |x|
\le \widetilde{R}_{d,m,\alpha,R,\lambda}$
\begin{equation*}
\E^x[e^{-\lambda T_R}]\ge \frac{1}{2}.
\end{equation*}
\end{enumerate}
\end{proposition}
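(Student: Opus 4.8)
The plan is to obtain both assertions directly from the survival probability estimate of Proposition~\ref{prop:upboundT}, by rewriting $\E^x[1-e^{-\lambda T_R}]$ as a Laplace-type integral of $\bP^x(T_R>t)$ in the time variable; part~(2) will then follow from part~(1) by a crude smallness argument.

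For part~(1), I would fix an auxiliary radius $R_0>R$ (for instance $R_0=2R$) and let $R^{(0)}_{d,m,\alpha,R,\lambda}$ be any radius in $(R,R_0)$, so that the bound of Proposition~\ref{prop:upboundT} is available on the shell $R\le|x|\le R^{(0)}_{d,m,\alpha,R,\lambda}$. Using the elementary identity $1-e^{-\lambda T_R}=\lambda\int_0^{T_R}e^{-\lambda t}\,dt$ (with the convention $e^{-\lambda\cdot\infty}=0$, under which the identity also holds on $\{T_R=\infty\}$) together with Tonelli's theorem, one gets
\begin{equation*}
\E^x[1-e^{-\lambda T_R}]=\lambda\int_0^{\infty}e^{-\lambda t}\,\bP^x(T_R>t)\,dt .
\end{equation*}
Inserting $\bP^x(T_R>t)\le C_{d,m,\alpha,R,R_0}\,(|x|-R)^{\alpha/2}/(\sqrt t\wedge R^{\alpha/2})$ and factoring out $(|x|-R)^{\alpha/2}$, it remains to bound $\int_0^{\infty}e^{-\lambda t}(\sqrt t\wedge R^{\alpha/2})^{-1}\,dt$; this is finite because the integrand is dominated by $(t^{-1/2}+R^{-\alpha/2})e^{-\lambda t}$, yielding the explicit upper bound $\sqrt{\pi/\lambda}+(\lambda R^{\alpha/2})^{-1}$. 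Hence $\E^x[1-e^{-\lambda T_R}]\le C_{d,m,\alpha,R,\lambda}(|x|-R)^{\alpha/2}$ with $C_{d,m,\alpha,R,\lambda}=\lambda\,C_{d,m,\alpha,R,R_0}\big(\sqrt{\pi/\lambda}+(\lambda R^{\alpha/2})^{-1}\big)$.

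Part~(2) is then immediate: since $\E^x[e^{-\lambda T_R}]=1-\E^x[1-e^{-\lambda T_R}]\ge 1-C_{d,m,\alpha,R,\lambda}(|x|-R)^{\alpha/2}$, it suffices to ask that $(|x|-R)^{\alpha/2}\le 1/(2C_{d,m,\alpha,R,\lambda})$, i.e. to take
\begin{equation*}
\widetilde R_{d,m,\alpha,R,\lambda}=\min\Big\{R^{(0)}_{d,m,\alpha,R,\lambda},\;R+\big(2C_{d,m,\alpha,R,\lambda}\big)^{-2/\alpha}\Big\},
\end{equation*}
which gives $\E^x[e^{-\lambda T_R}]\ge\tfrac12$ for all $R\le|x|\le\widetilde R_{d,m,\alpha,R,\lambda}$.

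I do not anticipate a serious obstacle here; the only points demanding a little care are the passage to the Laplace-transform form of $\E^x[1-e^{-\lambda T_R}]$, where one must account for the mass of $\{T_R=\infty\}$ in the transient cases, and the integrability of $e^{-\lambda t}(\sqrt t\wedge R^{\alpha/2})^{-1}$ near $t=0$ --- both are taken care of automatically, the first by the convention above and the second by the fact that the small-time behaviour $t^{-1/2}$ coming from Proposition~\ref{prop:upboundT} is integrable, while $\lambda>0$ secures convergence at infinity. It should be noted that one cannot replace this argument by the cruder bound $1-e^{-\lambda T_R}\le\lambda T_R$, since $\E^x[T_R]$ may be infinite when $\pro X$ is transient, so the exponential weight is genuinely needed.
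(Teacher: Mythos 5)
Your proof is correct and follows essentially the same route as the paper: both rest on the upper bound of Proposition \ref{prop:upboundT} combined with the representation of $\E^x[1-e^{-\lambda T_R}]$ as $\lambda\int_0^\infty e^{-\lambda t}\,\bP^x(T_R>t)\,dt$, after which part (2) is the same smallness argument. The only difference is cosmetic and in your favour: the paper first conditions on $\{T_R<\infty\}$ and then adds back the defect $\bP^x(T_R=\infty)$ (itself controlled by Proposition \ref{prop:upboundT}), whereas your unconditional survival-function identity absorbs the event $\{T_R=\infty\}$ automatically, and your crude domination $\big(\sqrt t\wedge R^{\alpha/2}\big)^{-1}\le t^{-1/2}+R^{-\alpha/2}$ replaces the paper's more detailed splitting of the time integral while yielding the same $(|x|-R)^{\alpha/2}$ bound.
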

\begin{proof}
By Proposition \ref{prop:upboundT}
\begin{equation}
\label{eq:infprob}
\bP^x(T_R=\infty)\le C^{(1)}_{d,m,\alpha,R}\Big(1 \wedge \frac{(|x|-R)^{\alpha/2}}{R^{\alpha/2}}\Big),
\end{equation}
hence there exists $R^{(0)}_{d,m,\alpha,R,\lambda}>R$ such that, for $R<|x|<R^{(0)}_{d,m,\alpha,R,\lambda}$,
\begin{equation*}
\bP^x(T_R=\infty)\le C^{(1)}_{d,m,\alpha,R} \Big(\frac{R^{(0)}_{d,m,\alpha,R,\lambda}}{R}-1\Big)^{\alpha/2}
<\frac{1}{3},
\end{equation*}
so that
\begin{equation*}
\bP^x(T_R<\infty)\ge 1-C^{(1)}_{d,m,\alpha,R} \Big(\frac{R^{(0)}_{d,m,\alpha,R,\lambda}}{R}-1\Big)^{\alpha/2}
:=C^{(2)}_{d,m,\alpha,R}>\frac{2}{3}.
\end{equation*}
Notice that
\begin{equation*}
\label{eq:expon}
\E^x[e^{-\lambda T_R}]=\E^x[e^{-\lambda T_R}; T_R<\infty]=\E^x[e^{-\lambda T_R}|T_R<\infty]\bP^x(T_R<\infty).
\end{equation*}
Denote $\widetilde{\bP}^x(\cdot)=\bP^x(\, \cdot \,| \, T_R<\infty)$. We have
\begin{align*}
1-\widetilde{\E}^x[e^{-\lambda T_R}]
&=
\int_0^{\infty}\widetilde{\bP}^x(1-e^{-\lambda T_R}>s)ds = \int_0^{1}\widetilde{\bP}^x(1-e^{-\lambda T_R}>s)ds.
\end{align*}
Writing $s=1-e^{-\lambda t}$ we obtain
\begin{align*}
1-\widetilde{\E}^x[e^{-\lambda T_R}]
&=
\int_0^{\infty}\lambda e^{-\lambda t}\widetilde{\bP}^x(1-e^{-\lambda T_R}>1-e^{-\lambda t})dt =
\int_0^{\infty}\lambda e^{-\lambda t}\widetilde{\bP}^x(T_R>t)dt\\
&=
\frac{\lambda}{\bP^x(T_R<\infty)}\int_0^{\infty} e^{-\lambda t}\bP^x(T_R>t, T_R<\infty)dt\\
&\le
\frac{\lambda}{C^{(2)}_{d,m,\alpha,R}}\int_0^{\infty} e^{-\lambda t}\bP^x(T_R>t)dt.
\end{align*}
Using Proposition \ref{prop:upboundT} gives
\begin{equation*}
\bP^x(T_R>t)\le C^{(1)}_{d,m,\alpha,R}\left(1\wedge \frac{(|x|-R)^{{\alpha/2}}}{\sqrt{t}
\wedge R^{{\alpha/2}}}\right), \quad t>0.
\end{equation*}
so that, setting $C^{(3)}_{d,m,\alpha,R}=C^{(1)}_{d,m,\alpha,R}/C^{(2)}_{d,m,\alpha,R}$, we get
\begin{eqnarray*}
\label{eq:Laptran}
\lefteqn{
1-\widetilde{\E}^x[e^{-\lambda T_R}]} \nonumber \\
&\le&
\lambda C^{(3)}_{d,m,\alpha,R}\int_0^{\infty}e^{-\lambda t}
\left(1 \wedge \frac{(|x|-R)^{\alpha/2}}{\sqrt{t}\wedge R^{\alpha/2}}\right)dt \nonumber\\
&=&
\lambda C^{(3)}_{d,m,\alpha,R}\left(\int_0^{(|x|-R)^\alpha}
e^{-\lambda t}dt + (|x|-R)^{\alpha/2}
\Big(\int_{(|x|-R)^\alpha}^{R^\alpha}\frac{e^{-\lambda t}}{\sqrt{t}}dt +
\int_{R^\alpha}^{\infty}\frac{e^{-\lambda t}}{ R^{\alpha/2}}dt\Big) \right) \nonumber \\
&=&
\lambda C^{(3)}_{d,m,\alpha,R}\left(\frac{1-e^{-\lambda (|x|-R)^\alpha}}{\lambda}
+\frac{(|x|-R)^{\alpha/2}}{ \lambda R^{\alpha/2}}e^{-\lambda R^\alpha}
+\int_{(|x|-R)^\alpha}^{R^\alpha}e^{-\lambda t} \frac{(|x|-R)^{\alpha/2}}{\sqrt{t}}dt\right).
\end{eqnarray*}
The last term above can be further estimated as
\begin{eqnarray*}
(|x|-R)^{\alpha/2} \int_{(|x|-R)^\alpha}^{R^\alpha} \frac{e^{-\lambda t}dt}{\sqrt{t}}
&=&
2(|x|-R)^{\alpha/2}(e^{-\lambda R^\alpha}R^{\alpha/2}-e^{-\lambda (|x|-R)^\alpha}
(|x|-R)^{\alpha/2})\\
&& \quad
+2(|x|-R)^{\alpha/2}\int_{(|x|-R)^\alpha}^{R^\alpha}\lambda e^{-\lambda t}\sqrt{t}dt\\
&\le &
2(|x|-R)^{\alpha/2}(e^{-\lambda R^\alpha}R^{\alpha/2}-e^{-\lambda (|x|-R)^\alpha}
(|x|-R)^{\alpha/2})\\
&& \quad
+2(|x|-R)^{\alpha/2}R^{\alpha/2}(e^{-\lambda (|x|-R)^\alpha}-e^{-\lambda R^\alpha})\\
&=&
2(|x|-R)^{\alpha/2}e^{-\lambda (|x|-R)^\alpha}(R^{\alpha/2}-(|x|-R)^{\alpha/2}).
\end{eqnarray*}
In sum, we obtain
\begin{align*}
1-\widetilde{\E}^x[e^{-\lambda T_R}]
&\le
\lambda C^{(3)}_{d,m,\alpha,R}\left((|x|-R)^\alpha+\frac{(|x|-R)^{\alpha/2}}
{ \lambda R^{\alpha/2}}e^{-\lambda R^\alpha}
+2(|x|-R)^{\alpha/2}e^{-\lambda (|x|-R)^\alpha}R^{\alpha/2}\right)\\
&:=C^{(4)}_{d,m,\alpha,R,\lambda}(|x|-R)^\alpha.
\end{align*}
We can complete the proof of part (1) by observing that
\begin{align*}
\E^x[1-e^{-\lambda T_R}]&=\widetilde{\E}^x[1-e^{-\lambda T_R}]\bP^x(T_R < \infty) + \bP^x(T_R=\infty)\\
&\le C_{d,m,\alpha,R,\lambda}(|x|-R)^\alpha,\qquad  R \le |x| \le R^{(0)}_{d,m,\alpha,R,\lambda},
\end{align*}
where we made use of \eqref{eq:infprob}. Part (2) follows from (1) by choosing $R<\widetilde{R}_{d,m,\alpha,R,\lambda}
< R^{(0)}_{d,m,\alpha,R,\lambda}$ so that $\E^x[1-e^{-\lambda T_R}]\le 1/2$ holds for all $R \le |x| \le
\widetilde{R}_{d,m,\alpha,R,\lambda}$.
\end{proof}

Finally, we can combine Theorem \ref{thm:lowboundLapundercrit} with Proposition \ref{prop:halflower} to obtain the
following.
\begin{corollary}
\label{cor:goodlowbound}
Let $R_2>R$. Then there exists a constant $C_{d,m,\alpha,R_2,R,\lambda}$ such that
\begin{equation*}
\E^x[e^{-\lambda T_R}]\ge C_{d,m,\alpha,R_2,R,\lambda}j_{m,\alpha}(|x|), \quad R \le |x| < R_2.
\end{equation*}
\end{corollary}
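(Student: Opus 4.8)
The plan is to patch together the two lower bounds already in hand. Theorem~\ref{thm:lowboundLapundercrit} gives the estimate $\E^x[e^{-\lambda T_R}]\ge C\,j_{m,\alpha}(|x|)$ uniformly on an annulus $R_1\le|x|\le R_2$ for \emph{any} choice $R<R_1<R_2$, so the only region left uncovered is the thin collar $R\le|x|\le R_1$ adjacent to the sphere. That collar is exactly where Proposition~\ref{prop:halflower}(2) applies: it furnishes a radius $\widetilde R_{d,m,\alpha,R,\lambda}>R$ with $\E^x[e^{-\lambda T_R}]\ge\frac12$ for all $R\le|x|\le\widetilde R_{d,m,\alpha,R,\lambda}$. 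So first I would fix an intermediate radius, say $R_1:=\tfrac12\min\{\widetilde R_{d,m,\alpha,R,\lambda},R_2\}$ with the harmless convention that we may shrink $R_1$ to lie strictly between $R$ and $\widetilde R_{d,m,\alpha,R,\lambda}$, and set up the dichotomy $|x|<R_1$ versus $R_1\le|x|<R_2$.

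On the inner piece $R\le|x|<R_1$, Proposition~\ref{prop:halflower}(2) gives $\E^x[e^{-\lambda T_R}]\ge\frac12$, and since $j_{m,\alpha}$ is continuous, positive and radially decreasing on $\overline{\cB_{R_1}}\setminus\{0\}$ (recall it is either a negative power of $|x|$ or $|x|^{-(d+\alpha+1)/2}e^{-m^{1/\alpha}|x|}$ up to constants), we have $j_{m,\alpha}(|x|)\le j_{m,\alpha}(R)=:M<\infty$ on that collar. Hence
\begin{equation*}
\E^x[e^{-\lambda T_R}]\ge\frac12\ge\frac{1}{2M}\,j_{m,\alpha}(|x|),\qquad R\le|x|<R_1.
\end{equation*}
On the outer piece $R_1\le|x|<R_2$ I would simply invoke Theorem~\ref{thm:lowboundLapundercrit} with the pair $(R_1,R_2)$ in place of $(R_1,R_2)$ there, obtaining a constant $C_{d,m,\alpha,R_1,R_2,R,\lambda}$ with $\E^x[e^{-\lambda T_R}]\ge C_{d,m,\alpha,R_1,R_2,R,\lambda}\,j_{m,\alpha}(|x|)$ on that annulus (the point $|x|=R_2$ itself is excluded from the claim, so the half-open interval is fine). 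Taking the minimum of the two constants and absorbing the dependence on $R_1$, which was itself determined by $d,m,\alpha,R,\lambda$ and $R_2$, into the notation yields a single constant $C_{d,m,\alpha,R_2,R,\lambda}>0$ valid on the whole range $R\le|x|<R_2$.

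There is essentially no obstacle here beyond bookkeeping: both halves are already proved, and the only thing to check is that the threshold $R_1$ can be chosen to depend only on the allowed parameters, which it can since $\widetilde R_{d,m,\alpha,R,\lambda}$ does. The one mild subtlety worth a sentence in the write-up is the uniform upper bound $M=j_{m,\alpha}(R)$ on the collar and the fact that the collar is bounded away from the origin, so that $j_{m,\alpha}$ stays finite and the comparison $\frac12\ge\frac{1}{2M}j_{m,\alpha}(|x|)$ is legitimate; this is where continuity and monotonicity of the jump kernel near the sphere enter.
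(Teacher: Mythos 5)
Your proof is correct and follows essentially the same route as the paper: use Proposition \ref{prop:halflower}(2) together with the monotonicity bound $j_{m,\alpha}(|x|)\le j_{m,\alpha}(R)$ on the collar $R\le|x|<\widetilde R_{d,m,\alpha,R,\lambda}$, use Theorem \ref{thm:lowboundLapundercrit} on the remaining annulus, and take the minimum of the two constants. The paper simply takes $R_1=\widetilde R_{d,m,\alpha,R,\lambda}$ as the splitting radius, so your explicit formula $R_1=\tfrac12\min\{\widetilde R_{d,m,\alpha,R,\lambda},R_2\}$ (which as written can fall below $R$ and needs the adjustment you mention) is an unnecessary complication but not an error.
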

\begin{proof}
Let $\widetilde{R}_{d,m,\alpha,R,\lambda}$ be defined as in Proposition \ref{prop:halflower}. Then we have
\begin{equation*}
\E^{x}[e^{-\lambda T_R}]\ge \frac{1}{2}
=
\frac{j_{m,\alpha}(|x|)}{2j_{m,\alpha}(|x|)}\ge \frac{j_{m,\alpha}(|x|)}{2j_{m,\alpha}(R)},
\quad R \le |x|<\widetilde{R}_{d,m,\alpha,R,\lambda}.
\end{equation*}		
Combining this estimate with Theorem \ref{thm:lowboundLapundercrit} for $R_1=\widetilde{R}_{d,m,\alpha,R,\lambda}$
the result follows.
\end{proof}

To obtain an upper bound for the same quantities we can make use of \cite[Th. 3.3]{KL17}, particularized to
the massless and massive relativistic stable processes.
\begin{theorem}
\label{thm:upboundLap}
Let $\lambda, R>0$. There exists a constant $C_{d,m,\alpha,R,\lambda}>0$ such that
\begin{equation*}
\E^x[e^{-\lambda T_R}]\le C_{d,m,\alpha,R,\lambda} \; j_{m,\alpha}(|x|), \quad |x|\ge R.
\end{equation*}
\end{theorem}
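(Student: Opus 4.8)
The plan is to reduce the claim, via the Ikeda--Watanabe formula, to a convolution-type estimate on the $\lambda$-potential kernel against the L\'evy density, which is exactly the object estimated in a general setting by \cite[Th. 3.3]{KL17}, and then dispose of the remaining compact range by a trivial bound. First I would note that, since $\partial\cB_R$ has zero Lebesgue measure and the processes $\pro X$ have transition densities, $\bP^x(X_{T_R}\in\partial\cB_R)=0$; hence on $\{T_R<\infty\}$ the entrance point $X_{T_R}$ lies in the open ball $\cB_R$ almost surely. Applying \eqref{IW} with $\cD=\cB_R^c$ and $f=\mathbf{1}_{\cB_R}$ (and the convention $e^{-\lambda\cdot\infty}=0$) then yields
\[
\E^x[e^{-\lambda T_R}]=\int_{\cB_R^c}\left(\int_0^{\infty}e^{-\lambda t}p_{\cB_R^c}(t,x,y)\,dt\right)\int_{\cB_R}j_{m,\alpha}(|z-y|)\,dz\,dy,\qquad |x|>R.
\]
Bounding $p_{\cB_R^c}(t,x,y)\le p_t(x-y)$ and performing the $t$-integral turns the right-hand side into $\int_{\cB_R^c}g_\lambda(x-y)\,\nu_{m,\alpha}(\cB_R(y))\,dy$, where $g_\lambda$ denotes the $\lambda$-potential kernel of $\pro X$; this is precisely the type of functional controlled in \cite[Th. 3.3]{KL17}.

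To invoke that theorem I would only have to verify its structural hypotheses for the two families at hand. For $m=0$ this is immediate since $j_{0,\alpha}(r)={\mathcal A}_{d,\alpha}r^{-d-\alpha}$ is globally doubling and exactly self-similar. For $m>0$ the relevant inputs are the small-radius behaviour $j_{m,\alpha}(r)\asymp r^{-d-\alpha}$ and the tail $j_{m,\alpha}(r)\asymp r^{-(d+\alpha+1)/2}e^{-m^{1/\alpha}r}$ recalled in Section 2.2, together with the \emph{jump-paring} property of the relativistic $\alpha$-stable process established in \cite{KL15,KL17}: the additivity in the jump length of the exponential rate $e^{-m^{1/\alpha}r}$ makes a single long jump comparably likely to a chain of shorter ones, which is exactly what forces the above convolution functional to be dominated by $j_{m,\alpha}(|x|)$ up to a multiplicative constant. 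With these checked, \cite[Th. 3.3]{KL17} gives $\E^x[e^{-\lambda T_R}]\le C\,j_{m,\alpha}(|x|)$ for all $|x|\ge R_2$, for some $R_2>R$ and $C=C_{d,m,\alpha,R,\lambda}$.

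For the leftover range $R\le|x|<R_2$ the trivial bound $\E^x[e^{-\lambda T_R}]\le 1$ and monotonicity of $j_{m,\alpha}$ give $\E^x[e^{-\lambda T_R}]\le 1\le j_{m,\alpha}(R_2)^{-1}j_{m,\alpha}(|x|)$, so replacing $C$ by $\max\{C,\,j_{m,\alpha}(R_2)^{-1}\}$ produces a single constant valid for all $|x|\ge R$. The only genuine obstacle is the verification of the jump-paring/convolution hypothesis in the massive case (so that the exponentially decaying L\'evy tail still satisfies the sub-multiplicativity needed by \cite[Th. 3.3]{KL17}); everything else is the reduction via \eqref{IW} and routine bookkeeping of constants.
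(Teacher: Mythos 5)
Your proposal is correct and follows essentially the same route as the paper: the key step is the bound $\E^x[e^{-\lambda T_R}]\le C\, j_{m,\alpha}(|x|)$ for all $|x|\ge R_2>R$ supplied by \cite[Th. 3.3]{KL17}, and the remaining compact range $R\le |x|<R_2$ is closed by the trivial estimate $\E^x[e^{-\lambda T_R}]\le 1$ together with the monotonicity of $j_{m,\alpha}$, exactly as in the paper. The preliminary Ikeda--Watanabe reduction to a potential-kernel functional is superfluous (and the accompanying verification of the jump-paring hypotheses need not be redone), since \cite[Th. 3.3]{KL17} already applies directly to the massless and massive relativistic stable processes and bounds the Laplace transform $\E^x[e^{-\lambda T_R}]$ itself.
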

\begin{proof}
By \cite[Th. 3.3]{KL17} it follows that there exist constants $R^{(1)}_{d,\alpha,m,\lambda,R}>R$ and
$C^{(1)}_{d,\alpha,m,\lambda,R}>0$ such that
\begin{equation*}
\E^x[e^{-\lambda T_R}]\le C^{(1)}_{d,\alpha,m,\lambda,R} \; j_{m,\alpha}(|x|),
\quad |x|\ge R^{(1)}_{d,\alpha,m,\lambda,R}.
\end{equation*}
Let $R^{(2)}_{d,\alpha,m,\lambda,R}=R^{(1)}_{d,\alpha,m,\lambda,R}+1$ and notice that $j_{m,\alpha}(|x|)\ge
j_{m,\alpha}(R^{(2)}_{d,\alpha,m,\lambda,R})$ whenever $R \le |x| \le R^{(2)}_{d,\alpha,m,\lambda,R}$. Hence
for every $R \le |x| \le R^{(2)}_{d,\alpha,m,\lambda,R}$ we get
\begin{equation*}
\E^x[e^{-\lambda T_R}]\le 1 \le \frac{j_{m,\alpha}(|x|)}{j_{m,\alpha}(R^{(2)}_{d,\alpha,m,\lambda,R})}.
\end{equation*}
Setting $C_{d,m,\alpha,R,\lambda}=\max\left\{C^{(1)}_{d,\alpha,m,\lambda,R},
\frac{1}{j_{m,\alpha}(R^{(2)}_{d,\alpha,m,\lambda,R})}\right\}$ completes the proof.
\end{proof}

\begin{remark}
\label{remcontr1}
\hspace{100cm}
{\rm
\begin{enumerate}
\item
A similar upper estimate follows by using the Ikeda-Watanabe formula. In this approach we can derive a bound which is
uniform with respect to $\alpha \in [\alpha_0,2]$ for a suitable $\alpha_0>0$.
\item
Above we obtained a global upper and a local lower bound for $\E^x[e^{-\lambda T_R}]$. A global lower bound for
$\E^x[e^{-\lambda T_R}]$ outside the well will be obtained as a consequence of the estimates of the ground states.
\end{enumerate}
}
\end{remark}

\section{Basic qualitative properties of ground states}
\subsection{Martingale representation of ground states}
For our purposes below it will be useful to consider a variant of the Feynman-Kac representation \eqref{FKPW} with
general stopping times. In order to obtain this, the following martingale property will be important. Define the
random process $\pro {M^x}$,
\begin{equation}
M_t^x=e^{\lambda_0t} e^{-\int_0^t V(X_r+x) dr}\varphi_0(X_t+x), \quad x \in \R^d.
\label{gsmartingale}
\end{equation}
Note that by the eigenvalue equation $\ex[M_t^x]=\varphi_0(x)$, for all $t\geq 0$ and $x\in\R^d$. Let
$\pro {{\mathcal F}^X}$ be the natural filtration of the L\'evy process $\pro X$. A version of the following result
dates back at least to Carmona's work (see \cite[Sect. 4.6.3]{LHB} for a detailed discussion and references), but
since it is of fundamental interest in this paper, we provide a proof for a self-contained presentation.
\begin{lemma}
\label{mar1}
$(M_t^x)_{t\geq 0}$ is a martingale\index{martingale} with respect to $\pro {{\mathcal F}^X}$.
\end{lemma}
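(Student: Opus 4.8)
The plan is to verify the defining property of a martingale directly: that $\E[M_t^x \mid \mathcal{F}_s^X] = M_s^x$ for all $0 \le s \le t$. By the Markov property of the L\'evy process $\pro X$ and translation invariance, it suffices to check that $\E[M_t^x] = \varphi_0(x)$ in a form that upgrades to the conditional statement; more precisely, I would use that $M_{t}^x = M_s^x \cdot \widetilde M_{t-s}$, where $\widetilde M$ is built from the shifted process $(X_{s+r} - X_s)_{r \ge 0}$ started afresh, so that the tower property reduces everything to the one identity $\E^y[M_{u}^0] = \varphi_0(y)$ for the process started at an arbitrary point $y = X_s + x$. This last identity is exactly the content of the Feynman--Kac representation \eqref{FKPW} combined with the eigenvalue equation $H_{m,\alpha}\varphi_0 = \lambda_0\varphi_0$: indeed \eqref{FKPW} gives $e^{-uH_{m,\alpha}}\varphi_0 = e^{\lambda_0 u}\varphi_0$ pointwise, which rearranges to $\varphi_0(y) = e^{\lambda_0 u}\E^y[e^{-\int_0^u V(X_r)dr}\varphi_0(X_u)] = \E^y[M_u^0]$ after the obvious change of starting point.

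The key steps, in order, are: first, record that $M_t^x$ is integrable for each $t$, which follows because $\varphi_0$ is bounded (being continuous and decaying to zero at infinity, as noted in Section 2.2) and $V = -v\mathbf{1}_\cK$ is bounded, so $|M_t^x| \le e^{(\lambda_0 + v)t}\Norm{\varphi_0}{\infty}$; second, fix $0 \le s \le t$ and write the occupation-type integral as $\int_0^t V(X_r + x)dr = \int_0^s V(X_r+x)dr + \int_s^t V(X_r+x)dr$, pulling the $\mathcal F_s^X$-measurable factor $e^{\lambda_0 s}e^{-\int_0^s V(X_r+x)dr}$ outside the conditional expectation; third, apply the Markov property to the remaining factor, recognizing it (via the strong Markov / homogeneity of the L\'evy process) as $e^{\lambda_0(t-s)}\E^{X_s+x}[e^{-\int_0^{t-s}V(X_r)dr}\varphi_0(X_{t-s})]$ evaluated at the random point $X_s + x$; fourth, invoke \eqref{FKPW} with the eigenvalue equation to identify this conditional expectation with $\varphi_0(X_s+x)$; and fifth, reassemble the pieces to get exactly $M_s^x$.

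I expect the main obstacle to be a careful handling of the Markov property for the additive functional $\int_0^t V(X_r+x)\,dr$ together with the shift of the path --- i.e. making precise that $\int_s^t V(X_r+x)\,dr$, as a functional of the post-$s$ increments, transforms correctly under conditioning and that the spatial translation by $x$ interacts cleanly with translation by $X_s$. This is bookkeeping rather than a genuine difficulty, but it is where sign and measurability errors creep in. One should also be slightly attentive to the fact that \eqref{FKPW} is an $L^2$-type (a.e.) identity a priori; since $\varphi_0$ admits a bounded continuous version and $e^{-uH_{m,\alpha}}$ maps $L^\infty$ into bounded continuous functions (Section 2.2), the identity holds for every $y$, which is what the pointwise argument needs. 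With these points addressed, the martingale property follows immediately.
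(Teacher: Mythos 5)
Your proposal is correct and follows essentially the same route as the paper's proof: establish integrability of $M_t^x$ from the boundedness of $\varphi_0$ and $V$, then condition on $\mathcal{F}_s^X$, pull out the $\mathcal{F}_s^X$-measurable factor, apply the Markov property (with translation invariance) to the post-$s$ part, and identify the resulting expectation with $\varphi_0(X_s+x)$ via the Feynman--Kac representation \eqref{FKPW} and the eigenvalue equation. Your additional remark on upgrading \eqref{FKPW} to a pointwise identity via the bounded continuous version of $\varphi_0$ is a fine (implicit in the paper) refinement, not a departure.
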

\begin{proof}
We have
\begin{equation*}
\E[|M_t^x|] = \E[M_t^x] \leq e^{\lambda_0 t} \|\varphi_0\|_\infty \ex \left[ e^{-\int_0^t V(X_r+x) dr} \right]
\leq e^{{(v-|\lambda_0|) t}} \|\varphi_0\|_\infty  < \infty, \quad t \geq 0.
\end{equation*}
Let $0 \leq s \leq t$. By the Markov property of $\pro X$ we have that
\begin{eqnarray*}
\ex[M_t^x|{\mathcal F}^X_s]
&=&
e^{\lambda_0 t}e^{-\int_0^s V(X_r+x) dr}´\ex [e^{-\int_s^t V(X_r+x) dr}\varphi_0(X_t+x)|{\mathcal F}^X_s] \\
&=&
e^{\lambda_0 s }e^{-\int_0^s V(X_r+x) dr} \ex^{X_s} [e^{\lambda_0(t-s)}e^{-\int_0^{t-s} V(X_r+x) dr}
\varphi_0(X_{t-s}+x)]\\
&=&
e^{\lambda_0 s}e^{-\int_0^s V(X_r+x)dr} \varphi_0(X_s+x) = M_s^x.
\end{eqnarray*}
Hence the lemma follows.
\end{proof}
This martingale property easily leads to the following Feynman-Kac type formula for the stopped process.
\begin{proposition}\label{prop:stop}
Let $\tau$ be a $\mathbb P$-almost surely finite stopping time with respect to the filtration $\pro {{\mathcal F}^X}$.
Then
\begin{equation*}
\varphi_0(x)=\E^x\big[e^{-\int_0^{\tau}(V(X_s)-\lambda_0)ds}\varphi_0(X_\tau)\big].
\end{equation*}
\end{proposition}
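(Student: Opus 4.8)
The plan is to deduce Proposition \ref{prop:stop} from the martingale identity of Lemma \ref{mar1} by an optional-stopping argument. Fix $x\in\R^d$ and set $N_t=e^{\lambda_0 t}e^{-\int_0^t V(X_s)ds}\varphi_0(X_t)$; since $e^{-\int_0^\tau(V(X_s)-\lambda_0)ds}=e^{\lambda_0\tau}e^{-\int_0^\tau V(X_s)ds}$, the assertion to be proved is precisely $\varphi_0(x)=\E^x[N_\tau]$. By Lemma \ref{mar1} the process $(N_t)_{t\ge 0}$ is a nonnegative $\pro{{\mathcal F}^X}$-martingale under $\bP^x$ with $\E^x[N_t]=\varphi_0(x)$ for every $t\ge 0$, and it has c\`adl\`ag paths.

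First I would apply Doob's optional stopping theorem to the bounded stopping time $\tau\wedge n$, obtaining $\E^x[N_{\tau\wedge n}]=\varphi_0(x)$ for every $n\in\N$. Since $\tau<\infty$ $\bP^x$-almost surely, $N_{\tau\wedge n}\to N_\tau$ $\bP^x$-a.s. as $n\to\infty$, and the only issue is to pass to the limit under the expectation. The bound $\E^x[N_\tau]\le\varphi_0(x)$ is immediate from Fatou's lemma since $N\ge 0$. For the reverse inequality I would split $\varphi_0(x)=\E^x[N_{\tau\wedge n}]=\E^x[N_\tau;\tau\le n]+\E^x[N_n;\tau>n]$; monotone convergence yields $\E^x[N_\tau;\tau\le n]\uparrow\E^x[N_\tau]$, so the proof reduces to establishing $\E^x[N_n;\tau>n]\to 0$, that is, uniform integrability of $(N_{\tau\wedge n})_n$.

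This last point is the main obstacle, since the naive estimate $N_t\le e^{(v-|\lambda_0|)t}\|\varphi_0\|_\infty$ is useless ($v-|\lambda_0|=v+\lambda_0\ge 0$). The way around it is to exploit that the potential acts only while the path sits in the bounded well $\cK$: writing $N_{\tau\wedge n}=e^{\lambda_0(\tau\wedge n)}e^{vU^\cK_{\tau\wedge n}(X)}\varphi_0(X_{\tau\wedge n})$ and using $\lambda_0<0$, $0\le U^\cK_t(X)\le t$ and boundedness of $\varphi_0$, one obtains $N_{\tau\wedge n}\le\|\varphi_0\|_\infty$ when the trajectory stays outside $\cK$ up to $\tau$ (as for the entrance time $\tau=T_\cK$), so that bounded convergence applies; and $N_{\tau\wedge n}\le e^{(v+\lambda_0)\tau_\cK}\|\varphi_0\|_\infty$ when it stays inside $\cK$ up to $\tau$ (as for the exit time $\tau=\tau_\cK$), where $\E^x[e^{(v+\lambda_0)\tau_\cK}]<\infty$ because $v+\lambda_0<\lambda_\cK$, the principal Dirichlet eigenvalue of $L_{m,\alpha}$ on $\cK$ (by \cite[Lem. 4.5]{AL3}, together with the fact that $\lambda_\cK$ is the critical exponent for the exponential moments of $\tau_\cK$, cf. Theorem \ref{thm:CompMomGenFun}), so that dominated convergence applies. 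For a general stopping time one would decompose the path at the successive entrances into and exits from $\cK$ and iterate these two bounds, or alternatively invoke intrinsic ultracontractivity (Corollary \ref{lem:ultra}) and the ergodicity of the ground state transform of the process to bound $\E^x[N_n;\tau>n]$ directly. In all cases one arrives at $\E^x[N_n;\tau>n]\to 0$, hence at $\E^x[N_\tau]=\varphi_0(x)$, which is the desired identity.
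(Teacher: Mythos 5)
Your localization of the problem is the honest one, and the two special cases you actually verify are fine: for the exit time $\tau_{\cB_a}$ the domination $N_{\tau\wedge n}\le e^{(v+\lambda_0)\tau_{\cB_a}}\|\varphi_0\|_\infty$ together with $v+\lambda_0<\lambda_{\cB_a}$ (quoted in Section 2.2 from \cite{AL3}) and Theorem \ref{thm:CompMomGenFun} gives dominated convergence, while for paths staying outside the well one has $N_n\le e^{\lambda_0 n}\|\varphi_0\|_\infty\to 0$; together with bounded stopping times (the paper in fact uses $T_a\wedge M$ rather than $T_a$, which need not be a.s.\ finite in the transient case) these are exactly the instances needed later in Theorem \ref{mainth}. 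The gap is the final step. The proposition is stated for an \emph{arbitrary} $\bP^x$-a.s.\ finite stopping time, and your concluding sentence --- that by decomposing at successive entrances/exits, or by invoking IUC and ergodicity of the ground-state transform, one gets $\E^x[N_n;\tau>n]\to 0$ ``in all cases'' --- is an assertion, not an argument, and it cannot be pushed through by those means. Indeed, if $\widetilde{\bP}^x$ denotes the ground-state ($h$-)transform defined by $d\widetilde{\bP}^x|_{\cF^X_n}=\varphi_0(x)^{-1}N_n\,d\bP^x|_{\cF^X_n}$, then $\E^x[N_n;\tau>n]=\varphi_0(x)\,\widetilde{\bP}^x(\tau>n)$, so the missing limit is \emph{equivalent} to $\widetilde{\bP}^x(\tau<\infty)=1$. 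This is genuinely more than the hypothesis $\bP^x(\tau<\infty)=1$: the martingale $(N_t)_{t\ge0}$ is in general not uniformly integrable (in the transient regime $N_t\to 0$ $\bP^x$-a.s.\ while $\E^x[N_t]\equiv\varphi_0(x)$, so $\widetilde{\bP}^x$ and $\bP^x$ are mutually singular on $\cF^X_\infty$), and ergodicity or ultracontractivity of the transformed process says nothing about whether the given $\tau$ is finite under $\widetilde{\bP}^x$. So your scheme stalls exactly at the point you yourself flagged as the main obstacle, and as written the proposal does not prove the statement in the stated generality.

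For comparison, the paper takes a different and much shorter route: it observes that $M^x$ is a nonnegative martingale with $\E[(M^x_t)^+]=\varphi_0(x)<\infty$, invokes the martingale convergence theorem to obtain a final element $M^x_\infty$, and then applies the optional stopping theorem of \cite{RY13} to the martingale closed by this last element, giving $\varphi_0(x)=\E[M^x_0]=\E[M^x_\tau]$ in one stroke, with no discussion of uniform integrability. Note, however, that closing a nonnegative martingale by its a.s.\ limit only yields the supermartingale inequality $\varphi_0(x)\ge\E[M^x_\tau]$ unless uniform integrability (or some substitute, as in your two special cases) is available --- which is precisely the issue your attempt makes visible. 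If you restrict the statement to the stopping times actually used downstream (bounded times, and $\tau_a$ under the exponential-moment bound), your argument is complete and rigorous; for general a.s.\ finite $\tau$ it is not.
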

\begin{proof}
Since $\varphi_0$ is strictly positive, clearly $M_t^x$ is almost surely non-negative. Thus by the Feynman-Kac formula
\begin{equation*}
\E[(M_t^x)^+]=\E[M_t^x]=\varphi_0(x)\le \Norm{\varphi_0}{\infty}.
\end{equation*}
The martingale convergence theorem (see, e.g., \cite[Th. 2.10]{RY13}) implies that $\pro {M^x}$ has a final element
$M_\infty^x$ with $\E[|M_\infty^x|]<\infty$, and the optional stopping theorem (see, e.g., \cite[Th. 3.2]{RY13}) then
gives
\begin{equation*}
\varphi_0(x)=\E[M_0^x]=\E[M_\tau^x]=\E^x\big[e^{-\int_0^{\tau}(V(X_s)-\lambda_0)ds}\varphi_0(X_\tau)\big].
\end{equation*}
\end{proof}

\subsection{Symmetry properties}
Next we discuss some shape properties of ground states, specifically, symmetry and monotonicity, which will be
essential ingredients in the study of their local behaviour. First we show radial symmetry of the ground states 
for rotationally symmetric potential wells. This result can also been obtained by purely analytic methods, see 
\cite[Prop. 4.3]{AL3}.

\begin{theorem}\label{thm:rsym}
Let $\cK=\cB_a$ with a given $a>0$ and suppose that $H_{m,\alpha}$ has a ground state $\varphi_0$. Then $\varphi_0$ is
rotationally symmetric.
\end{theorem}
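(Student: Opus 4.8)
The plan is to exploit the probabilistic representation of the ground state together with the rotational invariance of the underlying process and of the potential well. First I would fix an arbitrary rotation $\rho \in SO(d)$ and set $\psi_0 = \varphi_0 \circ \rho$. The key observation is that $\rho$ commutes with $L_{m,\alpha}$: since $L_{m,\alpha}$ is a Fourier multiplier with radial symbol $\Phi_{m,\alpha}(|y|^2)$, and since $\cK = \cB_a$ satisfies $\rho^{-1}\cB_a = \cB_a$ so that $\mathbf 1_{\cB_a}\circ\rho = \mathbf 1_{\cB_a}$, we have $H_{m,\alpha}\psi_0 = (H_{m,\alpha}\varphi_0)\circ\rho = \lambda_0 \psi_0$. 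Hence $\psi_0$ is again an eigenfunction at the bottom of the spectrum. By uniqueness and strict positivity of the ground state (recalled in Section 2.2 from \cite[Sects. 4.3.2, 4.9.1]{LHB}), and the $L^2$-normalisation being preserved by $\rho$, it follows that $\psi_0 = \varphi_0$. Since $\rho$ was arbitrary, $\varphi_0$ is rotationally symmetric.

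Alternatively — and this is probably the route the authors take given the emphasis on the martingale method — one can argue directly from the Feynman-Kac representation. By Proposition \ref{prop:stop} (or just from \eqref{FKPW}), for any rotation $\rho$ and any $t \ge 0$,
\begin{equation*}
\varphi_0(\rho x) = e^{\lambda_0 t}\E^{\rho x}\big[e^{vU_t^{\cB_a}(X)}\varphi_0(X_t)\big].
\end{equation*}
Using that the law of $\pro X$ started at $\rho x$ is the image under $\rho$ of the law started at $x$ (rotational invariance of the stable / relativistic stable process), that $U_t^{\cB_a}(\rho X) = U_t^{\cB_a}(X)$ because $\cB_a$ is rotation-invariant, one gets $\varphi_0(\rho x) = e^{\lambda_0 t}\E^{x}\big[e^{vU_t^{\cB_a}(X)}\varphi_0(\rho X_t)\big] = e^{-tH_{m,\alpha}}(\varphi_0\circ\rho)(x)\,e^{\lambda_0 t}$. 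So $\varphi_0 \circ \rho$ solves the same eigenvalue equation; then conclude by uniqueness as before.

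The steps in order: (1) record that $\cB_a$ is invariant under every $\rho \in SO(d)$, so $V\circ\rho = V$; (2) show $H_{m,\alpha}$ commutes with the unitary $U_\rho f = f\circ\rho$ on $L^2(\R^d)$, either via the Fourier multiplier description of $L_{m,\alpha}$ or via the rotational invariance of the Lévy process in the Feynman-Kac formula; (3) deduce $\varphi_0\circ\rho$ is a ground state with the same $L^2$-norm; (4) invoke uniqueness and positivity of the ground state to get $\varphi_0\circ\rho = \varphi_0$; (5) since $\rho$ is arbitrary, $\varphi_0$ is radial. One should also note for later use that radiality plus the fact that $\varphi_0 = \E^{\cdot}[e^{-|\lambda_0|T_a}\varphi_0(X_{T_a})]$ outside $\cB_a$, combined with monotonicity of the jump kernel, will give radial monotone decrease — but that may be deferred.

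The main obstacle is purely a matter of making step (2) clean: one must be slightly careful that the domain $\Dom(H_{m,\alpha})$ is preserved by $U_\rho$ (immediate from the Fourier-multiplier description of the domain, since $|\cdot|$ is rotation-invariant and $V$ is bounded), and that the eigenvalue at the bottom is simple so that no rotation of a would-be ground state can escape it — this is exactly the positivity-improving property of the Feynman-Kac semigroup already quoted. Beyond that the argument is essentially a symmetry-uniqueness bootstrap and involves no hard estimates.
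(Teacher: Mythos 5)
Your proposal is correct, and your second route is essentially the paper's own proof: the authors fix ${\sf R}\in SO(d)$, set $\widetilde\varphi_0=\varphi_0\circ{\sf R}$, use the Feynman--Kac formula \eqref{FKPW} together with the rotational invariance of the process and of $V=-v\mathbf 1_{\cB_a}$ to show that $\widetilde\varphi_0$ is again a positive, $L^2$-normalised solution of the same representation, and conclude by uniqueness of the ground state. Your first route (commutation of $H_{m,\alpha}$ with the rotation unitaries via the radial Fourier multiplier, plus simplicity and positivity of the ground state) is the purely analytic alternative the paper itself points to, citing \cite[Prop. 4.3]{AL3}, so no gap either way.
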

\begin{proof}
First observe that if another function $\widetilde{\varphi}_0$ existed satisfying \eqref{FKPW}, $\Vert \widetilde{\varphi}_0
\Vert_2=1$ and $\widetilde{\varphi}_0>0$, then by the uniqueness of the ground state we would have $\widetilde{\varphi}_0
\equiv \varphi_0$ almost surely.

Fix a rotation ${\sf R} \in SO(d)$ and consider $\widetilde{\varphi}_0(x)=\varphi_0({\sf R} x)$. Clearly, since
${\sf R}$ is an isometry, it is immediate that $\Vert \widetilde{\varphi}_0 \Vert_2=1$, $\widetilde{\varphi}_0>0$,
and $\widetilde{\varphi}_0(x)=\E[e^{-\int_0^t(V(X_s+{\sf R}x)-\lambda_0)ds}\varphi_0(X_t+ {\sf R}x)]$ by \eqref{FKPW}.
By rotational invariance of $\pro X$ we may furthermore write
\begin{equation*}
\widetilde{\varphi}_0(x)=\E\big[e^{-\int_0^t(V({\sf R} X_s+ {\sf R} x)-\lambda_0)ds}
\varphi_0({\sf R} X_t+{\sf R} x)\big] = \E\big[e^{-\int_0^t(V(X_s+x)-\lambda_0)ds}\widetilde{\varphi}_0(X_t+x)\big],
\end{equation*}
where we used the fact that also $V$ is rotationally invariant and $\cK=\cB_a$. Then by the observation above,
$\widetilde{\varphi}_0\equiv \varphi_0$ almost surely. Since ${\sf R} \in SO(d)$ is arbitrary, the claim follows.
\end{proof}

We can also prove a reduced symmetry of $\varphi_0$ for cases when $\cK$ is not spherically symmetric.
\begin{theorem}
\label{thm:asym}
Let $\cK$ be reflection symmetric with respect to a hyperplane $\cH$ such that $0 \in \cH$, and let ${\sf S}:
\R^d \to \R^d$, $x \mapsto {\sf S} x$, be such that ${\sf S} x$ is the reflection of $x$ with respect to $\cH$. Suppose
that $v$ is chosen such that $H_{m,\alpha}$ has a ground state $\varphi_0$. Then $\varphi_0({\sf S} x)=\varphi_0(x)$,
for all $x\in \R^d$.
\end{theorem}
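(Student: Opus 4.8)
The plan is to mimic the proof of Theorem \ref{thm:rsym}, replacing the rotation $\sf R \in SO(d)$ by the reflection $\sf S$ and using the reduced symmetry hypothesis on $\cK$ and on $V$. First I would record the key structural facts we rely on: by uniqueness of the ground state (recalled in Section 2.2), any function satisfying the Feynman–Kac identity \eqref{FKPW}, strictly positive, with unit $L^2$-norm, must coincide with $\varphi_0$ almost everywhere; and by Proposition \ref{prop:stop} (or simply \eqref{FKPW} itself for a fixed deterministic time $t$) $\varphi_0$ is characterized as a fixed point of that path-integral expression.

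Next I would set $\widetilde{\varphi}_0(x) = \varphi_0(\sf S x)$. Since $\sf S$ is a linear isometry of $\R^d$ (an orthogonal reflection) one has $\|\widetilde{\varphi}_0\|_2 = \|\varphi_0\|_2 = 1$ and $\widetilde{\varphi}_0 > 0$. Substituting into \eqref{FKPW} gives
\begin{equation*}
\widetilde{\varphi}_0(x) = \E\big[e^{-\int_0^t (V(X_s + \sf S x) - \lambda_0)\,ds}\varphi_0(X_t + \sf S x)\big].
\end{equation*}
The crucial point is that the process $\pro X$, being rotationally invariant (hence invariant under every element of $O(d)$, in particular under the reflection $\sf S$), satisfies $(\sf S X_s)_{s \ge 0} \overset{d}{=} (X_s)_{s\ge 0}$ as processes. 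Replacing $X_s$ by $\sf S X_s$ under the expectation and using linearity $\sf S X_s + \sf S x = \sf S(X_s + x)$ yields
\begin{equation*}
\widetilde{\varphi}_0(x) = \E\big[e^{-\int_0^t (V(\sf S(X_s + x)) - \lambda_0)\,ds}\varphi_0(\sf S(X_t + x))\big].
\end{equation*}
Now invoke $V \circ \sf S = V$, which holds because $V = -v\mathbf 1_\cK$ and $\cK$ is reflection symmetric with respect to $\cH$ (note $0 \in \cH$ is what makes $\sf S$ a linear map, so the process manipulation is legitimate), and $\varphi_0 \circ \sf S = \widetilde{\varphi}_0$ by definition. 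This turns the right-hand side into exactly $\E\big[e^{-\int_0^t (V(X_s + x) - \lambda_0)\,ds}\widetilde{\varphi}_0(X_t + x)\big]$, i.e., $\widetilde{\varphi}_0$ satisfies the same fixed-point equation \eqref{FKPW} as $\varphi_0$.

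Finally, by the uniqueness remark, $\widetilde{\varphi}_0 \equiv \varphi_0$ almost everywhere; since both are the continuous (strictly positive) versions of the ground state, the identity holds for every $x \in \R^d$, which is the claim $\varphi_0(\sf S x) = \varphi_0(x)$. I do not expect a genuine obstacle here — the argument is a near-verbatim transcription of Theorem \ref{thm:rsym}. The only point demanding a little care is ensuring that $\sf S$ is treated as a linear isometry (which uses $0 \in \cH$) so that it commutes with the additive translation by $x$ inside the path integral and preserves the law of $\pro X$; once that is observed, the rotational invariance of the stable and relativistic stable processes does the rest.
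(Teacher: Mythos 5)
Your proposal is correct and follows essentially the same route as the paper: define $\widetilde{\varphi}_0=\varphi_0\circ{\sf S}$, use the isometry property of ${\sf S}$ and the isotropy of $\pro X$ together with the reflection symmetry of $\cK$ to show $\widetilde{\varphi}_0$ satisfies \eqref{FKPW}, and conclude by uniqueness of the (strictly positive, normalized) ground state. Your remark that $0\in\cH$ makes ${\sf S}$ linear, so that it commutes with translation and preserves the law of the process, is exactly the point implicitly used in the paper's argument.
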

\begin{proof}
We can argue similarly to Theorem \ref{thm:rsym}. Consider $\widetilde{\varphi}_0(x)=\varphi_0({\sf S} x)$. By the
isometry property of ${\sf S}$ we have again $\Vert \widetilde{\varphi}_0 \Vert_2=1$, $\widetilde{\varphi}_0>0$,
and $\widetilde{\varphi}_0(x)=\E[e^{-\int_0^t(V(X_s+{\sf S} x)-\lambda_0)ds}\varphi_0(X_t+ {\sf S} x)]$ by \eqref{FKPW}.
Since $\pro X$ is isotropic, we get
\begin{equation*}
\widetilde{\varphi}_0(x)=\E\big[e^{-\int_0^t(V({\sf S} X_s+ {\sf S} x)-\lambda_0)ds}\varphi_0({\sf S} X_t+
{\sf S} x)\big] = \E\big[e^{-\int_0^t(V(X_s+x)-\lambda_0)ds}\widetilde{\varphi}_0(X_t+x)\big],
\end{equation*}
where we used the fact that if $x \in \cK$, then also ${\sf S} x \in \cK$. Arguing as before, we obtain
$\varphi_0({\sf S} x)=\widetilde{\varphi}_0(x)=\varphi_0(x)$ for all $x\in \R^d$.
\end{proof}

\begin{remark}\label{rmk:rsym}
{\rm
We note that Theorems \ref{thm:rsym} and \ref{thm:asym} hold respectively for any rotationally or reflection
symmetric potential $V$ once a ground state exists and is unique. Moreover, they can be seen as converses to
\cite[Th. 7.1-7.2]{AL}, by using the expression
\begin{equation*}
V= - \frac{1}{\varphi_0}L_{m,\alpha}\varphi_0+\lambda_0,
\end{equation*}
provided $L_{m,\alpha}\varphi_0$ can be defined pointwise.
}
\end{remark}
We fix $\cK=\cB_a$ for some $a>0$ and assume that $H_{m,\alpha}$ has a ground state. Furthermore, we will make
extensive use of the following, for a proof see \cite{AL3}.
\begin{proposition}
\label{ass:mon}
There exists a non-increasing function $\rho_0:[0,\infty) \to \R$ such that $\varphi_0(x)=\rho_0(|x|)$ for every
$x \in \R^d$.
\end{proposition}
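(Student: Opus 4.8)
The plan is a symmetrization argument for the Feynman--Kac semigroup $T_t=e^{-tH_{m,\alpha}}$. By Theorem~\ref{thm:rsym} we already have $\varphi_0(x)=\rho_0(|x|)$ for some $\rho_0:[0,\infty)\to\R$, so it suffices to show that $\rho_0$ is non-increasing; equivalently, since $\varphi_0>0$, that $\varphi_0$ coincides with its symmetric decreasing rearrangement $\varphi_0^*$. The argument below in fact proves the latter directly.

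First I would record that the free transition density $p_t$ is unimodal, i.e.\ $p_t(x)=h_t(|x|)$ with $h_t$ non-increasing on $[0,\infty)$, for every $t>0$ and $m\geq0$: both the $\alpha$-stable and the relativistic $\alpha$-stable process are subordinate Brownian motions (with subordinator Bernstein function $z\mapsto z^{\alpha/2}$, respectively $z\mapsto(z+m^{2/\alpha})^{\alpha/2}-m$), so $p_t$ is a mixture of Gaussian densities, hence symmetric decreasing, and in particular $p_t^*=p_t$. Next I would use the Trotter product formula $T_tf=\lim_{n\to\infty}\big(e^{-\frac{t}{n}L_{m,\alpha}}\,e^{\frac{tv}{n}\mathbf{1}_{\cB_a}}\big)^{n}f$, valid since $V=-v\mathbf{1}_{\cB_a}$ is bounded. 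Writing $e^{\frac{tv}{n}\mathbf{1}_{\cB_a}}=I+N_n$, with $N_n$ the operator of multiplication by the compactly supported symmetric decreasing function $n_*:=(e^{tv/n}-1)\mathbf{1}_{\cB_a}$, expanding the $n$-th power and merging consecutive free propagators via the semigroup property shows that, for $f\geq0$, the quantity $\langle f,\big(e^{-\frac{t}{n}L_{m,\alpha}}e^{\frac{tv}{n}\mathbf{1}_{\cB_a}}\big)^{n}f\rangle$ is a finite sum of non-negative integrals of the form
\[
\int_{(\R^d)^{k+2}} f(y_0)\,p_{t_0}(y_0-y_1)\,n_*(y_1)\,p_{t_1}(y_1-y_2)\cdots n_*(y_k)\,p_{t_k}(y_k-y_{k+1})\,f(y_{k+1})\ \mathrm dy_0\cdots \mathrm dy_{k+1},
\]
with $t_j\geq0$, $\sum_j t_j=t$, in which every factor is a non-negative, symmetric decreasing, vanishing-at-infinity function of a single one of the variables or of a difference of two consecutive ones. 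By the Brascamp--Lieb--Luttinger rearrangement inequality each such integral does not decrease if $f$ is replaced by $f^*$; summing and letting $n\to\infty$ gives
\[
\langle f,T_tf\rangle\ \leq\ \langle f^*,T_tf^*\rangle,\qquad f\in L^2(\R^d),\ f\geq0,\ t>0.
\]

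Applying this with $f=\varphi_0$: since $\varphi_0$ is the ground state, $T_t\varphi_0=e^{-\lambda_0t}\varphi_0$, and since $\lambda_0=\inf\Spec H_{m,\alpha}<0$ is an isolated simple eigenvalue, $e^{-\lambda_0t}=\Norm{T_t}{}$ is an isolated eigenvalue of the bounded positive self-adjoint operator $T_t$ with one-dimensional eigenspace $\R\varphi_0$. As rearrangement preserves the $L^2$-norm,
\[
e^{-\lambda_0t}\Norm{\varphi_0}{2}^2=\langle\varphi_0,T_t\varphi_0\rangle\leq\langle\varphi_0^*,T_t\varphi_0^*\rangle\leq e^{-\lambda_0t}\Norm{\varphi_0^*}{2}^2=e^{-\lambda_0t}\Norm{\varphi_0}{2}^2,
\]
so $\langle\varphi_0^*,T_t\varphi_0^*\rangle=e^{-\lambda_0t}\Norm{\varphi_0^*}{2}^2$, i.e.\ $\varphi_0^*$ realises the norm of $T_t$ and hence lies in its top eigenspace; thus $\varphi_0^*=c\,\varphi_0$, and equality of $L^2$-norms together with positivity force $c=1$. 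Therefore $\varphi_0=\varphi_0^*$, which is precisely the assertion that $\rho_0$ equals its decreasing rearrangement, hence is non-increasing.

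The step I expect to be the main obstacle is making the rearrangement inequality for $T_t$ rigorous: one must invoke unimodality of $p_t$ in exactly the form required by Brascamp--Lieb--Luttinger, and deal with the fact that the Feynman--Kac multiplier $e^{-tV/n}$ does not vanish at infinity -- which is the reason for first splitting it as $I+N_n$ with $N_n$ a compactly supported symmetric decreasing perturbation and only then symmetrizing. The remaining ingredients -- Trotter's formula for bounded potentials, the spectral identity $\Norm{T_t}{}=e^{-\lambda_0t}$, and simplicity of the ground state -- are all available above. (An alternative, more PDE-flavoured route would be a nonlocal method of moving planes across the hyperplanes $\{x_1=t\}$, using that $|{\sf S}_tx|<|x|$ when $x_1>t>0$, so that $V\geq V\circ{\sf S}_t$ there, together with the strong maximum principle for $L_{m,\alpha}$ on half-spaces; this works as well, but the sliding argument near the edge of the well is delicate, so I would prefer the symmetrization proof.)
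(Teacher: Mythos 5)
Your proof is essentially correct, but note that this paper never proves Proposition \ref{ass:mon} internally: the statement is imported from \cite{AL3}, where (as indicated after Theorem \ref{thm:rsym}) it is obtained by analytic arguments. Your route is therefore a genuinely different, self-contained one: it is the classical Feynman--Kac symmetrization argument in the spirit of \cite{BKM}, where unimodality of $p_t$ follows from the subordinate-Brownian-motion structure of both processes, boundedness of $V=-v\mathbf{1}_{\cB_a}$ justifies the Trotter formula, and the splitting $e^{\frac{tv}{n}\mathbf{1}_{\cB_a}}=I+N_n$ correctly disposes of the non-rearrangeable constant part before applying Brascamp--Lieb--Luttinger. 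A nice feature is that you settle the equality case purely by the variational characterisation of the top of $\Spec T_t$ together with simplicity of the ground state, so the delicate strict-inequality version of the rearrangement inequality is never needed; in exchange you rely on spectral inputs ($\lambda_0=\inf\Spec H_{m,\alpha}$ an eigenvalue with one-dimensional eigenspace, $\Norm{T_t}{}=e^{-\lambda_0 t}$) which the paper does supply in Section 2.2. As a by-product your argument reproves Theorem \ref{thm:rsym}, since $\varphi_0=\varphi_0^*$ is automatically radial.

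Three small points to tighten. First, in the expansion of $\big(e^{-\frac{t}{n}L_{m,\alpha}}(I+N_n)\big)^n$ some terms end with $N_n$ acting directly on $f$, so the generic integral may lack the rightmost propagator; after merging consecutive free factors every surviving time is a positive multiple of $t/n$, so you should write the terms in that form rather than allow $t_j=0$ (which would smuggle in a degenerate kernel). Second, the conclusion $\varphi_0=\varphi_0^*$ holds a priori only almost everywhere; use the continuity of $\varphi_0$ (Section 2.2) to upgrade a.e.\ coincidence with the radial non-increasing function $\varphi_0^*$ to monotonicity of $\rho_0$ at every radius, which is what the proposition asserts. Third, isolation of $\lambda_0$ is not actually needed: by the spectral theorem $\ker\big(T_t-e^{-\lambda_0 t}\big)=\ker\big(H_{m,\alpha}-\lambda_0\big)$, so simplicity of the eigenvalue alone closes the argument.
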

\noindent

\section{Local estimates}
\subsection{A prime example: Classical Laplacian and Brownian motion}
\label{primex}
First we present the case of the classical Schr\"odinger operator with a potential well, for which not only estimates can be
obtained but a full reconstruction of the ground state is possible by using the martingale $\pro M$ in (\ref{gsmartingale}).
Alternatively this can be done by an explicit solution of the Schr\"odinger eigenvalue
equation, which in this case is a textbook example, however, our point here is that while the eigenvalue problem cannot in
general be solved for non-local cases, the probabilistic approach is a useful alternative and this example shows best how
this can be done by using occupation times.
\begin{proposition}
Let
\begin{equation*}
H = -\frac{1}{2}\frac{d^2}{dx^2}  - v {\mathbf 1}_{\{|x| \leq a\}}
\end{equation*}
be given on $L^2(\R)$. Then the normalized ground state of $H$ is
\begin{equation*}
\varphi_0(x) = A_0 e^{-\sqrt{2|\lambda_0|}|x|} {\mathbf 1}_{\{|x| > a\}} +
B_0\cos(\sqrt{2(v-|\lambda_0|)} \,x) {\mathbf 1}_{\{|x| \leq a\}},
\end{equation*}
with
\begin{equation*}
A_0 = \sqrt{\frac{\sqrt{2|\lambda_0|}}{1+a\sqrt{2|\lambda_0|}}} e^{a\sqrt{2|\lambda_0|}} \cos(a\sqrt{2(v-|\lambda_0|)}),
 \quad B_0 = \sqrt{\frac{\sqrt{2|\lambda_0|}}{1+a\sqrt{2|\lambda_0|}}}.
\end{equation*}
\end{proposition}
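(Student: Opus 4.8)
The plan is to run the scheme behind \eqref{forBM} in the one setting where it can be pushed to a complete answer, namely Brownian motion, and then to fix the single remaining scalar by $L^2$-normalization. Set $\kappa=\sqrt{2(v-|\lambda_0|)}$ and $\mu=\sqrt{2|\lambda_0|}$; both are strictly positive because $\lambda_0\in(-v,0)$, so $v-|\lambda_0|=v+\lambda_0>0$ (a ground state exists here by the standard argument, one-dimensional Brownian motion being recurrent). As in Theorem \ref{thm:asym}, or simply from uniqueness and positivity of the ground state together with the reflection invariance of $H$, the function $\varphi_0$ is even, so $\varphi_0(a)=\varphi_0(-a)=:c>0$; it thus suffices to recover the stated piecewise form with $A_0=c\,e^{\mu a}$ and $B_0=c/\cos(\kappa a)$, and afterwards to identify $c$.

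I would first invoke the martingale argument behind Proposition \ref{prop:stop}, which applies verbatim with $\pro X$ being one-dimensional Brownian motion of generator $\tfrac12\tfrac{d^2}{dx^2}$, taking $\tau=\tau_a$ when $|x|<a$ and $\tau=T_a$ when $|x|>a$; both are $\bP^x$-a.s.\ finite, $\tau_a$ as an exit time of a bounded interval and $T_a$ by recurrence of one-dimensional Brownian motion. Since paths are continuous, $X_{\tau_a},X_{T_a}\in\{-a,a\}$, hence $\varphi_0(X_{\tau_a})=\varphi_0(X_{T_a})=c$ a.s.\ by evenness, and the weight $-\int_0^\tau(V(X_s)-\lambda_0)\,ds$ equals $(v+\lambda_0)\tau_a=(v-|\lambda_0|)\tau_a$ while the path stays in the well ($V\equiv-v$ there) and $\lambda_0 T_a=-|\lambda_0|T_a$ while it stays outside ($V\equiv0$ there); therefore
\[
\varphi_0(x)=c\,\E^x\big[e^{(v-|\lambda_0|)\tau_a}\big]\quad (|x|<a),\qquad
\varphi_0(x)=c\,\E^x\big[e^{-|\lambda_0|T_a}\big]\quad (|x|>a).
\]
Next I would evaluate the two functionals by the standard one-dimensional computations: $w(x)=\E^x[e^{u\tau_a}]$ solves $\tfrac12 w''+uw=0$ on $(-a,a)$ with $w(\pm a)=1$, whose even solution is $w(x)=\cos(\kappa x)/\cos(\kappa a)$ for $u=v-|\lambda_0|$ (as recalled in Remark \ref{CT}), where finiteness and positivity of $\E^x[e^{u\tau_a}]=\varphi_0(x)/c$ on $[-a,a]$ force $\cos(\kappa a)>0$, i.e.\ $\kappa a<\pi/2$, the classical analogue of the inequality $v+\lambda_0<\lambda_{\cK}$ recalled in Section 2.2; and $h(x)=\E^x[e^{-\lambda T_a}]$ for $x>a$ solves $\tfrac12 h''-\lambda h=0$ on $(a,\infty)$, is bounded, and equals $1$ at $a$, hence $h(x)=e^{-\mu(x-a)}$, so by evenness $\E^x[e^{-|\lambda_0|T_a}]=e^{-\mu(|x|-a)}$ for $|x|>a$. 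Substituting gives the claimed piecewise expression with $A_0=c\,e^{\mu a}$ and $B_0=c/\cos(\kappa a)$.

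It remains to identify $c$. Since $\varphi_0$ lies in the domain of $H$ it is $C^1$ across $x=\pm a$ (a jump of $\varphi_0'$ would make $\varphi_0''$ non-$L^2$), and matching the one-sided derivatives of the two branches at $a$ gives the smooth-fit relation $\mu=\kappa\tan(\kappa a)$. Then, using $A_0^2 e^{-2\mu a}=B_0^2\cos^2(\kappa a)$,
\[
\|\varphi_0\|_2^2=2\int_0^a B_0^2\cos^2(\kappa x)\,dx+2\int_a^\infty A_0^2 e^{-2\mu x}\,dx
=B_0^2\Big(a+\tfrac{\sin(\kappa a)\cos(\kappa a)}{\kappa}+\tfrac{\cos^2(\kappa a)}{\mu}\Big),
\]
and inserting $1/\mu=\cos(\kappa a)/(\kappa\sin(\kappa a))$ collapses the bracket to $a+1/\mu$, so $\|\varphi_0\|_2^2=1$ yields $B_0=\sqrt{\mu/(1+a\mu)}=\sqrt{\sqrt{2|\lambda_0|}/(1+a\sqrt{2|\lambda_0|})}$ and then $A_0=B_0\cos(\kappa a)e^{\mu a}$, exactly the stated formulas. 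All steps are elementary; the points that deserve a little care are the a.s.\ finiteness of $T_a$ (recurrence of one-dimensional Brownian motion), which is what makes Proposition \ref{prop:stop} applicable outside the well, the use of evenness to reduce $\varphi_0(X_{\tau_a})$ and $\varphi_0(X_{T_a})$ to the single constant $c$, and the smooth-fit identity $\mu=\kappa\tan(\kappa a)$, which is precisely what makes the normalizing constant depend on $\lambda_0$ alone and not jointly on $\lambda_0$ and $v$. I do not anticipate any genuine obstacle beyond this bookkeeping.
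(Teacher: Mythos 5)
Your proposal is correct, and its skeleton coincides with the paper's: both proofs use the optional-stopping identity of Proposition \ref{prop:stop} at the exit time from the well and the entrance time into it, path continuity to localize $X_{\tau_a}$, $X_{T_a}$ on $\{\pm a\}$, evenness of $\varphi_0$ to reduce everything to the single constant $\varphi_0(a)$, and then $L^2$-normalization. The differences are in how the two exponential functionals are evaluated and in how explicit the final step is. The paper works with Brownian motion started at $0$ and quotes L\'evy's barrier formulas \eqref{hittinglaplace}, in particular obtaining the inside profile $\cos(\kappa x)/\cos(\kappa a)$ by (implicitly) continuing the two-barrier Laplace transform $\ex[e^{-uT_{b,c}}]$ to the negative argument $u=-(v-|\lambda_0|)$; you instead characterize $\E^x[e^{(v-|\lambda_0|)\tau_a}]$ and $\E^x[e^{-|\lambda_0|T_a}]$ as solutions of the obvious boundary-value problems, which sidesteps that continuation and automatically records the constraint $\kappa a<\pi/2$ (the classical analogue of $v-|\lambda_0|<\lambda_a$), with finiteness of the moment generating function supplied for free by the identity $\varphi_0(x)=c\,\E^x[e^{(v-|\lambda_0|)\tau_a}]$ itself. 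Your treatment of the normalization is also more complete than the paper's one-line remark: the stated values of $A_0$ and $B_0$ follow from $\|\varphi_0\|_2=1$ only in conjunction with the transcendental eigenvalue relation $\sqrt{2|\lambda_0|}=\kappa\tan(\kappa a)$, and you correctly obtain this as the smooth-fit ($C^1$ matching) condition at $\pm a$ before collapsing the normalization integral to $B_0^2(a+1/\sqrt{2|\lambda_0|})$. So the argument is sound, and on the two points where the paper is terse (analytic continuation of the hitting-time formula, and the role of the smooth-fit relation in the normalization) your version is actually the more carefully justified one.
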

\begin{proof}

Consider for any $b,c \in \R$ with $b<0<c$, the first hitting times
\begin{equation*}
T_{b} = \inf\{t>0: B_t = b\}, \quad T_{c} = \inf\{t>0: B_t = c\},  \quad \mbox{and} \quad T_{b,c} =
T_{b} \wedge T_c,
\end{equation*}
for Brownian motion $\pro B$ starting at zero, and recall the general formula by L\'evy \cite{L51}
\begin{equation*}
\ex^x[e^{iuT_{b,c}}] = \frac{e^{(1+i)x\sqrt u}}{e^{(1+i)c\sqrt u} + e^{(1+i)b\sqrt u }}
+ \frac{e^{-(1+i)x\sqrt u }}{e^{-(1+i)b\sqrt u} + e^{-(1+i)c\sqrt u}},
\end{equation*}
with $b < x < c$, and
\begin{equation}
\label{hittinglaplace}
\ex[e^{-uT_b}] = e^{-\sqrt{2u}|b|} \quad \mbox{and} \quad \ex[e^{-uT_{b,c}}] =
\frac{\cosh\left(\sqrt{2u}\,\frac{c+b}{2}\right)}{\cosh\left(\sqrt{2u}\, \frac{c-b}{2}\right)},
\quad u \geq 0.
\end{equation}
It is well-known that all these hitting times are almost surely finite stopping times with respect to the natural filtration. From (\ref{FKPW}) we have
\begin{equation*}
\varphi_0(x) =  \ex[e^{-|\lambda_0| t  + vU^x_t(a)} \varphi_0(B_t + x)],
\end{equation*}
where we denote
\begin{equation*}
U^x_t(a) = \int_0^t {\mathbf 1}_{\{|B_s+x| \leq a\}} ds =  \int_0^t {\mathbf 1}_{\{ -a-x \leq B_s\leq a-x\}} ds.
\end{equation*}
Then $U^x_{T_{-a-x,a-x}}(a) = T_{-a-x,a-x}$ whenever $|x| < a$, and is zero otherwise. Using Proposition \ref{prop:stop}
we obtain
\begin{eqnarray*}
\varphi_0(x)
&=&
\ex[e^{-|\lambda_0|T_{-a-x,a-x} + vU^x_{T_{-a-x,a-x}}(a)} \varphi_0(B_{T_{-a-x,a-x}}+x)].
\end{eqnarray*}
Now suppose $x > a$. By path continuity $T_{-a-x,a-x} = T_{a-x}$ and thus
\begin{equation*}
\varphi_0(x) = \ex[e^{-|\lambda_0|T_{a-x}} \varphi_0(B_{T_{a-x}}+x)] = \varphi_0(a) \ex[e^{-|\lambda_0|T_{a-x}}] =
\varphi_0(a) e^{-\sqrt{2|\lambda_0|}(x-a)}.
\end{equation*}
We obtain similarly for $x < -a$ that $T_{-a-x,a-x} = T_{-a-x}$ and
\begin{equation*}
\varphi_0(x) = \ex[e^{-|\lambda_0|T_{-a-x}} \varphi_0(B_{T_{-a-x}}+x)] = \varphi_0(-a) e^{-\sqrt{2|\lambda_0|}(-x-a)}
= \varphi_0(a) e^{\sqrt{2|\lambda_0|}(x+a)}
\end{equation*}
using $\varphi_0(-a)=\varphi_0(a)$. When $-a < x < a$, the two-barrier formula in (\ref{hittinglaplace})
gives
\begin{eqnarray*}
\varphi_0(x)
&=&
\ex[e^{(v-|\lambda_0|)T_{-a-x,a-x}} \varphi_0(B_{T_{-a-x,a-x}}+x)] \\
&=&
\ex[e^{(v-|\lambda_0|)T_{-a-x,a-x}} \varphi_0(B_{T_{-a-x,a-x}}+x) \textbf{1}_{\{T_{-a-x} < T_{a-x}\}}] \\
&& \qquad
+ \, \ex[e^{(v-|\lambda_0|)T_{-a-x,a-x}} \varphi_0(B_{T_{-a-x,a-x}}+x) \textbf{1}_{\{T_{-a-x} > T_{a-x}\}}] \\
&=&
\varphi_0(a) \frac{\cos(\sqrt{2(v-|\lambda_0|)}x)}{\cos(\sqrt{2(v-|\lambda_0|)}a)}.
\end{eqnarray*}
The constant $\varphi_0(a)$ can be determined by the normalization condition $\|\varphi_0\|_2=1$, which then yields the
claimed expression of the ground state.
\end{proof}

\begin{remark}
\label{CT}
{\rm
The argument can also be extended to higher dimensions. For instance, for $d \ge 3$, denote by $\cB_r(z)$ a ball of radius
$r$ centered in $z$, write $\cB_r = \cB_r(0)$, and
define the stopping times
\begin{equation*}
T_{r} = \inf\{t>0: X_t \in \bar{\cB}_r \} \quad \mbox{and} \quad \tau_{r} =
\inf\{t>0: X_t \notin  \cB_r \}.
\end{equation*}
Using the Ciesielski-Taylor formulae (see, e.g., \cite[eq. (2.15)]{CT62} and \cite[formula 2.0.1]{BS15})
\begin{equation*}
\ex^x[e^{-u\tau_{r}}] = \left(\frac{r}{|x|}\right)^{\frac {d-2}{2}}
\frac{I_{\frac{d-2}{2}}(|x|\sqrt{2u})}{I_{\frac{d-2}{2}}(r\sqrt{2u})}
\quad \mbox{and} \quad
\ex^x[e^{-uT_{r}}] = \left(\frac{r}{|x|}\right)^{\frac {d-2}{2}}
\frac{K_{\frac{d-2}{2}}(|x|\sqrt{2u})}{K_{\frac{d-2}{2}}(r\sqrt{2u})},
\end{equation*}
and the properties of the Bessel function $J_{(d-2)/2}$ and modified Bessel functions $I_{(d-2)/2}$ and $K_{(d-2)/2}$ in
standard notation (for properties of the Bessel functions, we refer to \cite{Wat}), by a similar argument as above for the
potential well $-v \textbf{1}_{\cB_a}$ we obtain
\begin{equation*}
\varphi_0(x) = A_0 \left(\frac{a}{|x|}\right)^{\frac {d-2} 2}
K_{\frac{d-2}2}\big(\sqrt{2|\lambda_0|}\, |x|\big) \textbf{1}_{\{|x| > a\}}
+ B_0 \left(\frac{a}{|x|}\right)^{\frac {d-2} 2}J_{\frac{d-2}2}\big(\sqrt{2(v-|\lambda_0|)}\,|x|\big)
{\mathbf 1}_{\{|x| \leq a\}},
\end{equation*}
where the constants $A_0, B_0$ can be determined from $L^2$-normalization as before. The details are left
to the reader.
}
\end{remark}

\subsection{Local behaviour of the ground state}
To come to our main point in this section, we need some scaling estimates on the L\'evy measure $\nu_{m,\alpha}$
of the exterior of a ball.
\begin{lemma}\label{lem:estLevy}
For every $R>0$ there exists a constant $C_{d,m,\alpha,R} > 1$ such that
\begin{equation*}
\int_{\cB_{C_{d,m,\alpha,R}R}^c}j_{m,\alpha}(|x-y|)dy\le \frac{1}{2}\int_{\cB_{R}^c}j_{m,\alpha}(|x-y|)dy.
\end{equation*}
Moreover, if $m=0$, then $C_{d,0,\alpha,R}$ does not depend on $R$.
\end{lemma}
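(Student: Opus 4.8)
The plan is to reduce both sides to the L\'evy mass of the complement of a ball \emph{centred at the origin}, so that the dependence on $x$ is absorbed into crude triangle-inequality losses. First I would observe that the inequality carries content only for $x\in\cB_R$: if $|x|\ge R$, then $x$ lies in the closure of $\cB_R^c=\{|y|>R\}$, so $y\mapsto j_{m,\alpha}(|x-y|)$ has a non-integrable singularity at $y=x$ and the right-hand side is $+\infty$, making the bound trivial. So fix $x\in\cB_R$, i.e. $|x|<R$. The substitution $z=x-y$ rewrites the two integrals as $\nu_{m,\alpha}(\cB_R(x)^c)$ and $\nu_{m,\alpha}(\cB_{CR}(x)^c)$ with $C:=C_{d,m,\alpha,R}$. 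Using $|z|-|x|\le|x-z|\le|z|+|x|$ together with $|x|<R$ and $C>1$, I would sandwich these between origin-centred balls via the elementary inclusions
\begin{equation*}
\{z:|z|>2R\}\subseteq\{z:|x-z|>R\},\qquad \{z:|x-z|>CR\}\subseteq\{z:|z|>(C-1)R\},
\end{equation*}
which yield $\int_{\cB_R^c}j_{m,\alpha}(|x-y|)\,dy\ge\nu_{m,\alpha}(\cB_{2R}^c)$ and $\int_{\cB_{CR}^c}j_{m,\alpha}(|x-y|)\,dy\le\nu_{m,\alpha}(\cB_{(C-1)R}^c)$, both independent of $x$. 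It then suffices to pick $C$ with $\nu_{m,\alpha}(\cB_{(C-1)R}^c)\le\tfrac12\,\nu_{m,\alpha}(\cB_{2R}^c)$.

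For the choice of $C$ I would introduce $g(r):=\nu_{m,\alpha}(\cB_r^c)$ and record its elementary properties: $g(r)<\infty$ for every $r>0$ since $j_{m,\alpha}$ is integrable away from the origin (polynomial decay for $m=0$, exponential decay for $m>0$ by the $K_\rho$-asymptotics recalled in Section 2.1); $g$ is continuous and strictly decreasing because $j_{m,\alpha}$ is continuous and strictly positive on $(0,\infty)$; and $g(r)\downarrow 0$ as $r\to\infty$ by monotone convergence. Since $g(2R)>0$, the intermediate value theorem gives a (unique) $r_\star>2R$ with $g(r_\star)=\tfrac12 g(2R)$, and $C_{d,m,\alpha,R}:=1+r_\star/R>1$ works: $\nu_{m,\alpha}(\cB_{(C-1)R}^c)=g(r_\star)=\tfrac12 g(2R)\le\tfrac12\int_{\cB_R^c}j_{m,\alpha}(|x-y|)\,dy$, and chaining the two displayed inequalities closes the argument.

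For $m=0$ I would make this explicit: integrating in polar coordinates, $g(r)=\nu_{0,\alpha}(\cB_r^c)=\frac{\mathcal{A}_{d,\alpha}\,d\,\omega_d}{\alpha}\,r^{-\alpha}$, so the requirement $g((C-1)R)\le\tfrac12 g(2R)$ collapses to $(C-1)^{-\alpha}\le 2^{-\alpha-1}$, which holds for $C=1+2^{1+1/\alpha}$, independent of $R$ (and even of $d$), as claimed. I do not expect a genuine obstacle: the only point needing care is that $\int_{\cB_R^c}j_{m,\alpha}(|x-y|)\,dy\to\infty$ as $|x|\uparrow R$, so a bound uniform over $x\in\cB_R$ is not entirely automatic — but the origin-centred-ball sandwich above is exactly what removes this. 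The modest price for $m>0$ is that, lacking a closed form for $\nu_{m,\alpha}(\cB_r^c)$, the constant $C_{d,m,\alpha,R}$ is specified only implicitly through $g$.
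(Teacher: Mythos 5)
Your argument is correct, and it reaches the conclusion by a genuinely different route than the paper. The paper first invokes Anderson's inequality for the unimodal measure $\nu_{m,\alpha}$ to bound the right-hand side from below by the centred tail mass $\int_{\cB_R^c}j_{m,\alpha}(|y|)\,dy$, and then controls the left-hand side by a scaling argument: exact homogeneity $j_{0,\alpha}((k-1)r)=(k-1)^{-d-\alpha}j_{0,\alpha}(r)$ when $m=0$ (yielding $C_{d,0,\alpha}=1+2^{1/\alpha}$), and the Bessel-function asymptotics $j_{m,\alpha}(r)\approx r^{-(d+\alpha+1)/2}e^{-m^{1/\alpha}r}$ when $m>0$, which make the required smallness of the ratio explicit in $k$, $m$, $R$. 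You instead sandwich both integrals between origin-centred tail masses by pure triangle-inequality inclusions, paying the harmless price of comparing with $g(2R)=\nu_{m,\alpha}(\cB_{2R}^c)$ rather than $g(R)$, and then choose $C$ by the soft fact that $g(r)\to 0$ as $r\to\infty$; for $m=0$ the closed form $g(r)\propto r^{-\alpha}$ gives the $R$-independent constant $1+2^{1+1/\alpha}$. What your approach buys is elementarity and generality: it needs no unimodality/Anderson input and no asymptotics of $K_\rho$, and it would work verbatim for any rotationally symmetric L\'evy density with finite mass outside balls. What the paper's approach buys is quantitative information: Anderson gives the sharper lower bound $g(R)$ (hence a slightly smaller constant for $m=0$), and the scaling/asymptotic comparison makes the $m>0$ constant effectively explicit and transparently displays why it must degrade with $R$ (exponential versus polynomial tails), whereas your $r_\star$ for $m>0$ is only implicitly defined — which, as you note, the statement of the lemma does not require. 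Two cosmetic remarks: the vanishing of $g$ at infinity is continuity from above of a measure that is finite on $\cB_{r_0}^c$ (dominated rather than monotone convergence in the phrasing you chose, though your version can be made to work), and the uniformity-in-$x$ worry you flag is vacuous since the blow-up of the right-hand side as $|x|\uparrow R$ only helps; neither affects the proof.
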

\begin{proof}
Since $j_{m,\alpha}$ is non-increasing, for every $\theta>0$ the set $\{j_{m,\alpha}(|x|)\ge \theta\}$ is a ball
and then $\nu_{m,\alpha}(dx)$ is unimodal. As a consequence of Anderson's inequality \cite[Th. 1]{A55} we get
$\int_{\cB_R^c}j_{m,\alpha}(|x-y|)dy \ge \int_{\cB_R^c}j_{m, \alpha}(|y|)dy$, for every $R>0$ and $x \in \cB_R$.
Taking $R>0$, $x \in \cB_R$ and $k>2$, we obtain
\begin{align*}
\int_{\cB^c_{kR}}j_{m,\alpha}(|x-y|)dy&\le \int_{\cB^c_{(k-1)R}(x)}j_{m,\alpha}(|x-y|)dy\\&
= \int_{\cB^c_{(k-1)R}}j_{m,\alpha}((k-1)|y|)dy =(k-1)^d\int_{\cB^c_{R}}j_{m,\alpha}\left((k-1)|y|\right)dy.
\end{align*}
First consider $m=0$. We have
\begin{equation*}
\int_{\cB^c_{R}}j_{0,\alpha}((k-1)|y|)dy=\frac{1}{(k-1)^{d+\alpha}}\int_{\cB^c_{R}}j_{0,\alpha}(|y|)dy,
\end{equation*}
and thus
\begin{align*}
\int_{\cB^c_{kR}}j_{0,\alpha}(|x-y|)dy&
\le
\frac{1}{(k-1)^{\alpha}}\int_{\cB^c_{R}}j_{0,\alpha}\left(|y|\right)dy
\le
\frac{1}{(k-1)^{\alpha}}\int_{\cB^c_{R}}j_{0,\alpha}\left(|x-y|\right)dy.
\end{align*}
We can then set $C_{d,0,\alpha}=1+2^{1/\alpha}$ to complete the proof.

Next consider $m>0$. Using that
$j_{m,\alpha}(r)\sim C^{(2)}_{d,m,\alpha}r^{-\frac{d+\alpha+1}{2}}e^{-m^{1/\alpha}r}$ as $r \to \infty$,
we have
\begin{align*}
j_{m,\alpha}((k-1)|y|)
&\le
C_{d,\alpha,R}^{(3)} C^{(2)}_{d,m,\alpha}(k-1)^{-\frac{d+\alpha+1}{2}}|y|^{-\frac{d+\alpha+1}{2}}
\frac{e^{-m^{1/\alpha}(k-1)|y|}}{e^{-m^{1/\alpha}|y|}}e^{-m^{1/\alpha}|y|}\\
&\le
(C_{d,\alpha,R}^{(3)})^2 (k-1)^{-\frac{d+\alpha+1}{2}}e^{-m^{1/\alpha}kR}j_{m,\alpha}(|y|),
\end{align*}
with some $C^{(3)}_{d,\alpha,R}>1$, and hence
\begin{align*}
\int_{\cB^c_{kR}}j_{m,\alpha}(|y|) dy
&\le (C_{d,\alpha,R}^{(3)})^2(k-1)^{-\frac{d-\alpha-1}{2}}e^{-m^{1/\alpha}kR}\int_{\cB_R^c}j_{m,\alpha}(|y|)dy\\
&\le (C_{d,\alpha,R}^{(3)})^2(k-1)^{-\frac{d-\alpha-1}{2}}e^{-m^{1/\alpha}kR}\int_{\cB_R^c}j_{m,\alpha}(|x-y|)dy.
\end{align*}
Choosing $C_{d,m,\alpha,R}>2$ such that $(C_{d,\alpha,R}^{(3)})^2(C_{d,m,\alpha,R}-1)^{-\frac{d-\alpha-1}{2}}
e^{-m^{1/\alpha}C_{d,m,\alpha,R}R}\le \frac{1}{2}$ and using it instead of $k$, the claim follows.
\end{proof}
Combining the last estimate with the Ikeda-Watanabe formula, we obtain the following result.
\begin{lemma}
\label{lem:contrfun}
For every $R>0$ there exists a constant $C_{d,m,\alpha,R}>0$ such
that
\begin{equation*}
\E^x\big[g(\tau_{R}); R\le |X_{\tau_R}|\le C_{d,m,\alpha,R}R\big] \ge \frac{1}{2}\E^x[g(\tau_R)]
\end{equation*}
for every non-negative function $g$ and all $x \in \cB_R$.
\end{lemma}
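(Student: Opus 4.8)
The plan is to express both sides of the asserted inequality through the Ikeda--Watanabe formula \eqref{IW} and then feed in the L\'evy-measure estimate of Lemma \ref{lem:estLevy}. Concretely, let $C=C_{d,m,\alpha,R}>1$ be the constant furnished by Lemma \ref{lem:estLevy} and take $\cD=\cB_R$. Applying \eqref{IW} with $f(t,y,z)=g(t)\mathbf 1_{\{|z|\le CR\}}$ --- a non-negative Borel function whenever $g$ is --- and using Tonelli to factor out the $z$-integration, one obtains
\[
\E^x\big[g(\tau_R); R\le |X_{\tau_R}|\le CR\big] = \int_{\cB_R} h(y)\left(\int_{R<|z|\le CR} j_{m,\alpha}(|z-y|)\, dz\right) dy,
\]
where $h(y)=\int_0^\infty g(t)\,p_{\cB_R}(t,x,y)\, dt\in[0,\infty]$; here I use that $X_{\tau_R}\in\cB_R^c$ almost surely, so that the lower threshold $R\le|X_{\tau_R}|$ is automatically in force. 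The same formula with $f(t,y,z)=g(t)$ gives $\E^x[g(\tau_R)]=\int_{\cB_R} h(y)\big(\int_{\cB_R^c} j_{m,\alpha}(|z-y|)\,dz\big)\,dy$, the inner integral now running over all of $\cB_R^c=\{|z|>R\}$.

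Next I would recast Lemma \ref{lem:estLevy} in the shape needed here. For $y\in\cB_R$ that lemma (with the roles of the fixed point and integration variable relabelled, using $j_{m,\alpha}(|z-y|)=j_{m,\alpha}(|y-z|)$) gives $\int_{\cB_{CR}^c} j_{m,\alpha}(|z-y|)\,dz\le \tfrac12\int_{\cB_R^c} j_{m,\alpha}(|z-y|)\,dz$. Since $\cB_R^c$ is the disjoint union of $\cB_{CR}^c$ and the annulus $\{R<|z|\le CR\}$, this inequality is equivalent to $\int_{\cB_{CR}^c} j_{m,\alpha}(|z-y|)\,dz\le \int_{R<|z|\le CR} j_{m,\alpha}(|z-y|)\,dz$, and therefore
\[
\int_{\cB_R^c} j_{m,\alpha}(|z-y|)\,dz\le 2\int_{R<|z|\le CR} j_{m,\alpha}(|z-y|)\,dz, \qquad y\in\cB_R.
\]

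Finally I would combine the displays: since $h\ge 0$, integrating the last inequality against $h(y)\,dy$ over $\cB_R$ gives $\E^x[g(\tau_R)]\le 2\,\E^x[g(\tau_R); R\le|X_{\tau_R}|\le CR]$, which is exactly the claim. Because the comparison is carried out at the level of the integrands rather than by cancellation, the argument is valid also when $\E^x[g(\tau_R)]=\infty$. There is no serious obstacle: the only points needing a little care are the almost-sure inclusion $X_{\tau_R}\in\cB_R^c$ (so the threshold $R$ is harmless) and the bookkeeping --- measurability and the use of Tonelli in \eqref{IW} --- required to handle an arbitrary non-negative $g$.
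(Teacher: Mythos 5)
Your proof is correct and follows essentially the same route as the paper: the Ikeda--Watanabe formula combined with Lemma \ref{lem:estLevy}. The only (harmless, in fact slightly cleaner) difference is that you compare the inner integrals $\int j_{m,\alpha}(|z-y|)\,dz$ pointwise in $y\in\cB_R$, where they are finite, so arbitrary non-negative $g$ is handled in one stroke, whereas the paper bounds the complementary event $\{|X_{\tau_R}|>C_{d,m,\alpha,R}R\}$ by $\tfrac12\E^x[g(\tau_R)]$ for bounded $g$ and then extends to general $g$ by monotone convergence.
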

\begin{proof}
First consider $g \in L^\infty(\R^d)$ and let $C_{d,m,\alpha,R}>0$ be defined as in Lemma \ref{lem:estLevy}.
By the Ikeda-Watanabe formula
\begin{equation*}
\E^x[g(\tau_R); |X_{\tau_R}|>C_{d,m,\alpha,R}R]=\int_0^{\infty}\int_{\cB_R}g(t)p_{\cB_R}(t,x,y)
\int_{\cB_{C_{d,m,\alpha,R}R}^c}j_{m,\alpha}(|y-z|)dzdydt.
\end{equation*}
Using Lemma \ref{lem:estLevy} we thus have
\begin{equation*}
\E^x[g(\tau_R); |X_{\tau_R}|>C_{d,m,\alpha,R}R]
\le
\frac{1}{2}\int_0^{\infty}\int_{\cB_R}g(t)p_{\cB_R}(t,x,y)\int_{\cB_{R}^c}j_{m,\alpha}(|y-z|)dzdydt
=\E^x[g(\tau_R)].
\end{equation*}
Next suppose that $g$ is unbounded and let $g_N(t)=g(t)\wedge N$ for $N \in \N$. Then $g_N \uparrow g$ pointwise,
moreover
\begin{equation*}
\E^x[g_N(\tau_R); R\le |X_{\tau_R}|\le C_{d,m,\alpha,R}R]\ge \frac{1}{2}\E^x[g_N(\tau_R)], \quad N \in \N.
\end{equation*}
As $N\to\infty$, by monotone convergence we then have
\begin{equation*}
\E^x[g(\tau_R); R\le |X_{\tau_R}|\le C_{d,m,\alpha,R}R]\ge \frac{1}{2}\E^x[g(\tau_R)].
\end{equation*}
\end{proof}

Now we can turn to local estimates of the ground state. Consider the spherical potential well supported in $\cK=\cB_a$
with some $a>0$.
\begin{theorem}
\label{mainth}
Let $\varphi_0$ be the ground state of $H_{m,\alpha}$ with $V=-v\mathbf{1}_{\cB_a}$ and denote
$\mathbf{a}=(a,0,\ldots,0)$. Then the estimates
\begin{equation*}
\varphi_0(x)\asymp \varphi_0(\mathbf{a})
\begin{cases}
\E^x[e^{(v-|\lambda_0|)\tau_a}] & \mbox{if \; $|x| \le a$}\\
\E^x[e^{-|\lambda_0|T_a}] & \mbox{if \; $|x| \ge a$}
\end{cases}
\end{equation*}
hold, where the comparability constant depends on $d,m,\alpha,a,v,\lambda_0$.
\end{theorem}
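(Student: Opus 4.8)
The plan is to reduce both estimates to a Feynman--Kac/optional-stopping representation of $\varphi_0(x)$ --- via Proposition~\ref{prop:stop} with the first exit time $\tau_a$ of $\cB_a$ when $|x|\le a$, and via the martingale of Lemma~\ref{mar1} stopped at the first hitting time $T_a$ of $\cB_a$ when $|x|\ge a$ --- and then to pinch the boundary factor $\varphi_0(X_{\tau_a})$, resp.\ $\varphi_0(X_{T_a})$, between two values of the radial profile $\rho_0$ of $\varphi_0$ (Proposition~\ref{ass:mon}), using that $\rho_0$ is non-increasing. Throughout I use that $\varphi_0$ is bounded, continuous and strictly positive and that $\lambda_0\in(-v,0)$, so that $|\lambda_0|>0$ and $v-|\lambda_0|=v+\lambda_0$.

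For $|x|\le a$: since $\cB_a$ is bounded, $\tau_a<\infty$ $\bP^x$-almost surely (e.g.\ by Proposition~\ref{prop:contrsurvexit}), and for $s<\tau_a$ one has $X_s\in\cB_a$, so $V(X_s)=-v$; hence Proposition~\ref{prop:stop} gives $\varphi_0(x)=\E^x[e^{(v-|\lambda_0|)\tau_a}\varphi_0(X_{\tau_a})]$, the right-hand side being finite because $v-|\lambda_0|<\lambda_a$ by \cite[Lem.~4.5]{AL3} together with Theorem~\ref{thm:CompMomGenFun}. Since $|X_{\tau_a}|\ge a$ and $\rho_0$ is non-increasing, $\varphi_0(X_{\tau_a})\le\rho_0(a)=\varphi_0(\mathbf a)$, which gives the upper bound $\varphi_0(x)\le\varphi_0(\mathbf a)\,\E^x[e^{(v-|\lambda_0|)\tau_a}]$. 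For the matching lower bound I would apply Lemma~\ref{lem:contrfun} with $R=a$ and the non-negative $g(t)=e^{(v-|\lambda_0|)t}$ to restrict, at the cost of a factor $1/2$, to the event $\{a\le|X_{\tau_a}|\le Ca\}$ with $C=C_{d,m,\alpha,a}$, on which $\varphi_0(X_{\tau_a})\ge\rho_0(Ca)=\varphi_0(C\mathbf a)>0$; the proof then closes by observing that $\varphi_0(C\mathbf a)/\varphi_0(\mathbf a)$ is a strictly positive constant depending only on $d,m,\alpha,a,v,\lambda_0$.

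For $|x|\ge a$ the same scheme applies with $T_a$ in place of $\tau_a$: along the path up to $T_a$ one has $V=0$, so the exponential weight becomes $e^{-|\lambda_0|T_a}$, and $|X_{T_a}|\le a$ gives $\varphi_0(\mathbf a)=\rho_0(a)\le\varphi_0(X_{T_a})\le\rho_0(0)=\varphi_0(0)$; since $\varphi_0(0)/\varphi_0(\mathbf a)$ is a finite positive constant, the two one-sided bounds follow immediately from the representation $\varphi_0(x)=\E^x[e^{-|\lambda_0|T_a}\varphi_0(X_{T_a});\,T_a<\infty]$. Establishing this representation is, I expect, the main obstacle, because $T_a$ can be infinite with positive probability (the transient case), so Proposition~\ref{prop:stop} does not apply directly. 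I would get around this by using the stopped martingale $M_{t\wedge T_a}$, where $M_t=e^{\lambda_0 t}e^{-\int_0^t V(X_s)ds}\varphi_0(X_t)$ is a martingale under $\bP^x$ by Lemma~\ref{mar1} (translation invariance): for $t\le T_a$ one has $\int_0^tV(X_s)ds=0$, so $M_{t\wedge T_a}=e^{\lambda_0(t\wedge T_a)}\varphi_0(X_{t\wedge T_a})\in(0,\Norm{\varphi_0}{\infty}]$, whence $M_{t\wedge T_a}$ is a bounded, hence uniformly integrable, martingale. Its almost sure limit equals $e^{-|\lambda_0|T_a}\varphi_0(X_{T_a})$ on $\{T_a<\infty\}$ and equals $\lim_{t\to\infty}e^{\lambda_0 t}\varphi_0(X_t)=0$ on $\{T_a=\infty\}$ (this is where $|\lambda_0|>0$ is used), so taking expectations yields $\varphi_0(x)=\E^x[M_0]$ equal to the claimed quantity. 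A slightly longer alternative is to apply Proposition~\ref{prop:stop} to the almost surely finite times $T_a\wedge\tau_{\cB_R}$, $R>a$, and let $R\to\infty$, using monotone convergence on $\{T_a<\infty\}$ and dominated convergence together with $e^{-|\lambda_0|\tau_{\cB_R}}\to0$ to dispose of the term arising from $\{T_a\ge\tau_{\cB_R}\}$.
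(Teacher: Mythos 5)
Your argument is correct and follows the paper's overall scheme (Proposition~\ref{prop:stop} at $\tau_a$ plus radial monotonicity for the interior, Lemma~\ref{lem:contrfun} for the interior lower bound, and a representation stopped at $T_a$ for the exterior), but it deviates in two places, both legitimately. First, outside the well you derive the exact identity $\varphi_0(x)=\E^x[e^{-|\lambda_0|T_a}\varphi_0(X_{T_a});\,T_a<\infty]$ from the bounded, hence uniformly integrable, stopped martingale $M_{t\wedge T_a}=e^{\lambda_0(t\wedge T_a)}\varphi_0(X_{t\wedge T_a})$, using $\lambda_0<0$ to kill the contribution of $\{T_a=\infty\}$; the paper instead applies Proposition~\ref{prop:stop} to the truncations $T_M=T_a\wedge M$ and passes to the limit separately in the two one-sided bounds, so your route is a little cleaner and gives an identity rather than two inequalities (your $T_a\wedge\tau_{\cB_R}$ variant is the same in spirit as the paper's truncation). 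Second, and more substantively, you dispose of the two ratios $\varphi_0(C\mathbf a)/\varphi_0(\mathbf a)$ and $\varphi_0(0)/\varphi_0(\mathbf a)$ by noting they are finite, strictly positive constants determined by the data; this is valid (by uniqueness, continuity and strict positivity of the normalized ground state) and does prove the theorem as stated, but it is a soft argument. The paper instead proves effective bounds: it shows $\bP^{C\mathbf a}(T_a=\infty)<1$, picks a time horizon $C'$ with $\bP^{C\mathbf a}(T_a>C')$ bounded away from $1$, and restricts to $\{T_a\le C'\}$ to get $\varphi_0(C\mathbf a)\ge (1-c)\,e^{-|\lambda_0|C'}\varphi_0(\mathbf a)$, and it bounds $\varphi_0(0)\le C\big(1+\tfrac{v-|\lambda_0|}{\lambda_a-v+|\lambda_0|}\big)\varphi_0(\mathbf a)$ by combining Step 1 with Theorem~\ref{thm:CompMomGenFun} and \eqref{eq:upboundeig}. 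What that extra work buys is explicit control of how the comparability constants depend on $v$ and $\lambda_0$ (the $v,\lambda_0$-independent interior constants recorded in Remark~\ref{rem:nolambda}, and the purely probabilistic re-derivation of $v-|\lambda_0|<\lambda_a$), which your shortcut does not deliver. A small further remark: invoking \cite{AL3} and Theorem~\ref{thm:CompMomGenFun} for finiteness of $\E^x[e^{(v-|\lambda_0|)\tau_a}]$ is not needed for the interior upper bound, since the inequality is trivially true when the right-hand side is infinite.
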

\begin{proof}
Note that $\varphi_0$ is rotationally symmetric by Theorem \ref{thm:rsym} and non-increasing by Proposition \ref{ass:mon}.
We first prove the bound inside and next outside the well.

\medskip
\noindent
\emph{Step 1:} First consider $|x| \le a$. Using Proposition \ref{prop:stop} with the almost surely finite stopping time $\tau_a$,
and that $X_{\tau_a}\in \cB_a^c$ and $\varphi_0(X_{\tau_a})\le \varphi_0(\mathbf{a})$, we have
\begin{equation}\label{eq:intpass1}
\varphi_0(x) = \E^x\big[e^{(v-|\lambda_0|)\tau_a}\varphi_0(X_{\tau_a})\big]
\le \varphi_0(\mathbf{a})\E^x\big[e^{(v-|\lambda_0|)\tau_a}\big].
\end{equation}
On the other hand, using that $|X_{\tau_a}|\le C^{(1)}_{d,m,\alpha,a}a$, where $C^{(1)}_{d,m,\alpha,a}$ is defined in Lemma
\ref{lem:contrfun}, we furthermore obtain
\begin{eqnarray*}
\varphi_0(x)
&\ge&
\E^x\big[e^{(v-|\lambda_0|)\tau_a}\varphi_0(X_{\tau_a}) ; a \le |X_{\tau_a}|\le C^{(1)}_{d,m,\alpha,a}a\big] \\
&\ge&
\varphi_0(C^{(1)}_{d,m,\alpha,a}\mathbf{a})\E^x\big[e^{(v-|\lambda_0|)\tau_a} ;  a \le |X_{\tau_a}|\le C^{(1)}_{d,m,\alpha,a}a\big].
\end{eqnarray*}
Recall that $C^{(1)}_{d,m,\alpha,a}>1$. Consider $T_{a}$ and $T_M=T_a \wedge M$ for any positive integer $M \in \N$. By Proposition
\ref{prop:stop} applied to the almost surely finite stopping time $T_M$, note that
\begin{equation*}
\varphi_0(C^{(1)}_{d,m,\alpha,a}\mathbf{a})=\E^{C^{(1)}_{d,m,\alpha,a}\mathbf{a}}[e^{-|\lambda_0|T_M}\varphi_0(X_{T_M})]\le \varphi_0(0)\E^{C^{(1)}_{d,m,\alpha,a}\mathbf{a}}[e^{-|\lambda_0|T_M}].
\end{equation*}
By dominated convergence, in the limit $M \to \infty$ we then get
\begin{equation*}
0<\varphi_0(C^{(1)}_{d,m,\alpha,a}\mathbf{a})\le  \varphi_0(0)\E^{C^{(1)}_{d,m,\alpha,a}\mathbf{a}}[e^{-|\lambda_0|T_a}],
\end{equation*}
implying $C^{(2)}_{d,m,\alpha,a}:=\bP^{C^{(1)}_{d,m,\alpha,a}\mathbf{a}}(T_a=\infty)<1$. In particular, there exists a constant
$C^{(3)}_{d,m,\alpha,a}>0$ such that $\bP^{C^{(1)}_{d,m,\alpha,a}\mathbf{a}}(T_a>C^{(3)}_{d,m,\alpha,a})<C^{(2)}_{d,m,\alpha,a}$.
Furthermore, by using Proposition \ref{prop:stop} again, we get
\begin{eqnarray*}
\varphi_0(C^{(1)}_{d,m,\alpha,a}\mathbf{a})
&=&
\E^{C^{(1)}_{d,m,\alpha,a}\mathbf{a}}[e^{-|\lambda_0|T_{M}}\varphi_0(X_{T_M})]\\
&\ge&
\E^{C^{(1)}_{d,m,\alpha,a}\mathbf{a}}[e^{-|\lambda_0|T_{a}}\varphi_0(X_{T_M})]
\ge \E^{C^{(1)}_{d,m,\alpha,a}\mathbf{a}}[e^{-|\lambda_0|T_{a}}\varphi_0(X_{T_M}) ; T_{a}\le C^{(3)}_{d,m,\alpha,a}].
\end{eqnarray*}
Since on the set $\{T_a \le  C^{(3)}_{d,m,\alpha,a}\}$ the random time $T_M$ is almost surely constant as $M \to \infty$,
in the limit
\begin{equation}\label{eq:intstep1}
\varphi_0(C^{(1)}_{d,m,\alpha,a}\mathbf{a})
\ge \E^{C^{(1)}_{d,m,\alpha,a}\mathbf{a}}[e^{-|\lambda_0|T_{a}}\varphi_0(X_{T_a}) ; T_{a}\le C^{(3)}_{d,m,\alpha,a}]
\ge (1-C^{(2)}_{d,m,\alpha,a})e^{-|\lambda_0|C^{(3)}_{d,m,\alpha,a}}\varphi_0(\mathbf{a})
\end{equation}
follows, where we also used Proposition \ref{ass:mon}. On the other hand, by Lemma \ref{lem:contrfun}, we have
\begin{equation}
\label{eq:intstep2}
\E^x\big[e^{(v-|\lambda_0|)\tau_a} ; a \le |X_{\tau_a}|\le C^{(1)}_{d,m,\alpha,a}a\big]
\ge \frac{1}{2}\E^x[e^{(v-|\lambda_0|)\tau_a}].
\end{equation}
Combining \eqref{eq:intstep1}-\eqref{eq:intstep2} with the above and choosing $C^{(4)}_{d,m,\alpha,a,|\lambda_0|}=
(1-C^{(2)}_{d,m,\alpha,a})e^{-|\lambda_0|C^{(3)}_{d,m,\alpha,a}}$ we obtain
\begin{equation*}
\varphi_0(x)\ge \frac{C^{(4)}_{d,m,\alpha,a,|\lambda_0|}}{2}\varphi_0(\mathbf{a})\E^x\big[e^{(v-|\lambda_0|)\tau_a}\big],
\end{equation*}
thus
\begin{equation*}
\varphi_0(x) \asymp \varphi_0(\mathbf{a})\E^x\big[e^{(v-|\lambda_0|)\tau_a}\big], \quad |x|\le a,
\end{equation*}
where the comparability constant depends on $d,m,\alpha,a,|\lambda_0|$.

\medskip
\noindent
\emph{Step 2:} Next consider $|x|>a$, and let $T_a$ and $T_M$ be defined as before. By Proposition \ref{prop:stop}
we have
\begin{align*}
\begin{split}
\varphi_0(x)=\E^x[e^{-|\lambda_0|T_{M}}\varphi_0(X_{T_M})] \ge \E^x[e^{-|\lambda_0|T_{a}}\varphi_0(X_{T_M})]
\ge \E^x[e^{-|\lambda_0|T_{a}}\varphi_0(X_{T_M}); T_a<\infty],
\end{split}
\end{align*}
due to $T_M \le T_a$. Taking the limit $M \to \infty$ and observing that $T_M$ is a definite constant if $T_a<\infty$, we
get
\begin{align}\label{eq:intpass2}
		\begin{split}
			\varphi_0(x)\ge \E^x[e^{-|\lambda_0|T_{a}}\varphi_0(X_{T_a}); T_a<\infty]\ge \varphi_0(\mathbf{a})\E^x[e^{-|\lambda_0|T_a};T_a<\infty]=\varphi_0(\mathbf{a})\E^x[e^{-|\lambda_0|T_a}].
		\end{split}
\end{align}
 On the other hand,
\begin{equation*}
\varphi_0(x)\le \varphi_0(0)\E^x[e^{-|\lambda_0|T_{M}}] \to \varphi_0(0)\E^x[e^{-|\lambda_0|T_{a}}],
\end{equation*}
as $M \to \infty$, by using dominated convergence. By Step 1, Theorem \ref{thm:CompMomGenFun} and \eqref{eq:upboundeig}
we find a constant $C^{(5)}_{d,m,\alpha,a,|\lambda_0|}$ such that
\begin{equation*}
\varphi_0(0) \le
C^{(5)}_{d,m,\alpha,a,|\lambda_0|}\varphi_0(\mathbf{a})\left(1+\frac{v-|\lambda_0|}{\lambda_a-v+|\lambda_0|}\right)
=:C^{(6)}_{d,m,\alpha,a,v,|\lambda_0|}\varphi_0(\mathbf{a}).
\end{equation*}
and thus
\begin{equation}\label{eq:intstep3}
\varphi_0(x)\le C^{(6)}_{d,m,\alpha,a,v,|\lambda_0|}\varphi_0(\mathbf{a})\E^x[e^{-|\lambda_0|T_{a}}].
\end{equation}
This leads to
\begin{equation*}
\varphi_0(x)\asymp \varphi_0(\mathbf{a})\E^x[e^{-|\lambda_0|T_{a}}], \quad |x|\ge a,
\end{equation*}
where the comparability constants depend on $d,m,\alpha,a,v,|\lambda_0|$.
\end{proof}

\begin{remark}
\label{rem:nolambda}
\hspace{100cm}
{\rm
\begin{enumerate}
\item 
In fact, along the way we also proved that
\begin{equation*}
C^{(1)}_{d,m,\alpha,a}\varphi_0(\mathbf{a})e^{-C^{(2)}_{d,m,\alpha,a}|\lambda_0|}\E^x[e^{(v-|\lambda_0|)\tau_a}]  \le
\varphi_0(x)\le C^{(3)}_{d,m,\alpha,a}\varphi_0(\mathbf{a})\E^x[e^{(v-|\lambda_0|)\tau_a}],
\end{equation*}
for every $|x|\le a$, with constants dependent only on $d,m,\alpha,a$ (and independent of $v$ and $\lambda_0$).
\item 
We point out that we have shown in particular that
\begin{equation*}
\E^x[e^{(v-|\lambda_0|)\tau_a}]\le \frac{2}{C^{(3)}_{d,m,\alpha,a,|\lambda_0|}} \frac{\varphi_0(x)}{\varphi_0(\mathbf{a})}
<\infty.
\end{equation*}
However, from \eqref{eq:momexitpass2} we know that $\E^x[e^{\lambda \tau_a}]$ is finite if and only if $\lambda<\lambda_a$.
Thus we have also shown that
\begin{equation}
\label{eq:upboundeig}
v-|\lambda_0|<\lambda_a.
\end{equation}
We note that to prove this only monotonicity of $\varphi_0$ outside the potential well is a required input, which has been 
proven in \cite{AL3} without using \eqref{eq:upboundeig} (which is, on the other hand, indispensable to obtain monotonicity 
inside the well). Thus this argument provides an alternative, purely probabilistic, proof of \cite[Lem. 4.5]{AL3}.	
\end{enumerate}
}
\end{remark}

Using the following estimate in conjunction with the estimates in Section 3 we can derive explicit local
estimates for the ground states of the massless and massive relativistic operators.
\begin{corollary}\label{cor:asymp}
With the same notations as in Theorem \ref{mainth} we have
\begin{equation*}
\varphi_0(x) \asymp
\; \varphi_0(\mathbf{a})\left\{
\begin{array}{lll}
1+\frac{v-|\lambda_0|}{\lambda_a-v+|\lambda_0|}\left(\frac{a-|x|}{a}\right)^{\alpha/2}
& \mbox{if \; $|x|\le a$} \vspace{0.2cm} \\
j_{m,\alpha}(|x|)
& \mbox{if \; $|x|\ge a$},
\end{array}\right.
\end{equation*}
where the comparability constant depends on $d,m,\alpha,a,v,|\lambda_0|$.
\end{corollary}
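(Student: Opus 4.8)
The plan is to feed the explicit two-sided bounds from Section~3 into the two branches of Theorem~\ref{mainth}, one case at a time.

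For $|x|\le a$ I would start from $\varphi_0(x)\asymp\varphi_0(\mathbf{a})\,\E^x[e^{(v-|\lambda_0|)\tau_a}]$ (Theorem~\ref{mainth}). Since $\lambda_0\in(-v,0)$ gives $0<v-|\lambda_0|<v$, and since $v-|\lambda_0|<\lambda_a$ by \eqref{eq:upboundeig} (see also Remark~\ref{rem:nolambda}(2)), Theorem~\ref{thm:CompMomGenFun} applies with $R=a$ and $\lambda=v-|\lambda_0|$ and yields
\begin{equation*}
\E^x[e^{(v-|\lambda_0|)\tau_a}]-1\;\asymp\;\frac{v-|\lambda_0|}{\lambda_a-v+|\lambda_0|}\Big(\frac{a-|x|}{a}\Big)^{\alpha/2}.
\end{equation*}
I would then invoke the elementary fact that adding $1$ preserves comparability of non-negative quantities (if $g,h\ge 0$ and $h\asymp g$ with constant $C\ge 1$, then $\tfrac1C(1+g)\le 1+h\le C(1+g)$), so that $\E^x[e^{(v-|\lambda_0|)\tau_a}]\asymp 1+\frac{v-|\lambda_0|}{\lambda_a-v+|\lambda_0|}\big(\frac{a-|x|}{a}\big)^{\alpha/2}$; substituting this back into Theorem~\ref{mainth} gives the first case.

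For $|x|\ge a$, Theorem~\ref{mainth} reduces the claim to $\varphi_0(x)\asymp\varphi_0(\mathbf{a})\,j_{m,\alpha}(|x|)$, which I would argue directly from the known decay of the ground state. As recalled in Section~2 (see \cite{KL17}) one has $\varphi_0(x)\approx j_{m,\alpha}(x)$ as $|x|\to\infty$; since $\varphi_0$ is continuous and strictly positive and $j_{m,\alpha}$ is continuous and strictly positive on $\{|x|\ge a\}$ (recall $a>0$), the ratio $\varphi_0(x)/j_{m,\alpha}(|x|)$ is bounded above and below by positive constants on every compact annulus $a\le|x|\le R$, while the $\approx$ relation controls it for $|x|>R$; hence $\varphi_0(x)\asymp j_{m,\alpha}(|x|)$ on all of $\{|x|\ge a\}$, and $\varphi_0(\mathbf{a})$ being a fixed positive constant the second case follows. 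As a byproduct, combining this with Theorem~\ref{mainth} also gives $\E^x[e^{-|\lambda_0|T_a}]\asymp j_{m,\alpha}(|x|)$ for all $|x|\ge a$, i.e.\ the global lower bound announced in Remark~\ref{remcontr1}(2); alternatively, one can assemble the same two-sided estimate for $\E^x[e^{-|\lambda_0|T_a}]$ from Theorem~\ref{thm:upboundLap} (global upper bound), Corollary~\ref{cor:goodlowbound} on a bounded annulus, and \cite[Th.~3.3]{KL17} beyond it.

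The routine part is the substitution and the ``harmless additive constant'' remark; the single point that needs care is the lower bound outside the well away from both $|x|=a$ and $|x|=\infty$, since Section~3 supplies only a local lower bound (Corollary~\ref{cor:goodlowbound}). The clean way around this, as above, is to bypass the time functional and read the outside estimate off the ground state itself, using that $\varphi_0\approx j_{m,\alpha}$ at infinity is already known and that $\varphi_0$ is continuous and strictly positive, which upgrades the asymptotic relation to a global $\asymp$.
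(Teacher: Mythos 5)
Your treatment of the case $|x|\le a$ is exactly the paper's: Theorem \ref{mainth} plus Theorem \ref{thm:CompMomGenFun} with $\lambda=v-|\lambda_0|\in[0,\lambda_a)$ via \eqref{eq:upboundeig}, and the ``add $1$'' step you spell out is the only (trivial) glue needed. For $|x|\ge a$ you take a genuinely different route. The paper never argues on $\varphi_0$ directly over the whole exterior: it feeds the \cite{KL17} ground-state lower bounds at infinity back through \eqref{eq:intstep3} to get a lower bound on $\E^x[e^{-|\lambda_0|T_a}]$ for large $|x|$, bridges the intermediate annulus with the quantitative Corollary \ref{cor:goodlowbound}, caps everything with Theorem \ref{thm:upboundLap}, and only then invokes Theorem \ref{mainth}; in this way every constant is assembled from the exit/hitting-time estimates of Section 3 together with the explicitly quoted constants of \cite{KL17}. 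You instead bypass the time functional and upgrade $\varphi_0\approx j_{m,\alpha}$ at infinity to a global $\asymp$ on $\{|x|\ge a\}$ by continuity, strict positivity and compactness of the annulus. This is logically sound and shorter (and, as you note, it even yields the global lower bound of Remark \ref{remcontr1}(2) as a byproduct), but it buys less: the constant you produce on the annulus is $\min/\max$ of $\varphi_0/j_{m,\alpha}$ there, plus the unspecified radius beyond which the \cite{KL17} asymptotics hold, so it is only ``parameter-dependent'' through the tautology that $\varphi_0$ and $\lambda_0$ are determined by $(d,m,\alpha,a,v)$; the paper's construction is what makes the constant effectively trackable and, in particular, what supports the later claims that the constants are independent of $\varphi_0$ (Corollary \ref{cor:asymp2}, via Proposition \ref{ata}). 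One small slip in your ``alternative'' assembly: \cite[Th. 3.3]{KL17} is an upper bound for the Laplace transform and cannot supply the lower bound beyond the annulus; the far-region lower bound must come from the ground-state fall-off (\cite[Cor. 4.1]{KL17} for $m=0$, \cite[Cor. 4.3(1)]{KL17} for $m>0$) pushed through \eqref{eq:intstep3}, which is precisely the paper's mechanism.
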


\begin{proof}
For $|x|\le a$ the result is immediate by a combination of Theorems \ref{mainth} and \ref{thm:CompMomGenFun},
using \eqref{eq:upboundeig}.
For $|x|\ge a$ we distinguish two cases. First, if $m=0$, by \cite[Cor. 4.1]{KL17} there exists
$R_{d,0,\alpha,a}$ such that
\begin{equation*}
\varphi_0(x)\ge C^{(1)}_{d,0,\alpha}|x|^{-d-\alpha}\ge C^{(2)}_{d,0,\alpha}j_{0,\alpha}(|x|),
\quad |x|\ge R_{d,0,\alpha,a},
\end{equation*}
where $C^{(1)}_{d,0,\alpha}$ is defined in the quoted result and
$C^{(2)}_{d,0,\alpha}=C^{(1)}_{d,0,\alpha}\frac{\pi^{d/2}\left|\Gamma\left(-\frac{\alpha}{2}\right)\right|}
{2^\alpha \Gamma\left(\frac{d+\alpha}{2}\right)}$.
Secondly, when $m>0$ we use \cite[Cor. 4.3(1)]{KL17} to find that there exists $R_{d,m,\alpha,a}$ such that
\begin{equation*}
\varphi_0(x)\ge C^{(1)}_{d,m,\alpha,a}|x|^{-\frac{d+\alpha+1}{2}}e^{-m^{1/\alpha}|x|}, \quad
|x| \ge R_{d,m,\alpha,a}.
\end{equation*}
Moreover, we know that $j_{m,\alpha}(x)\sim |x|^{-\frac{d+\alpha+1}{2}}e^{-m^{1/\alpha}|x|}$ as $|x| \to \infty$,
hence there exists a constant $C^{(2)}_{d,m,\alpha}$ such that
$\varphi_0(x)\ge C^{(2)}_{d,m,\alpha,a}j_{m,\alpha}(|x|)$ for $|x|\ge R_{d,m,\alpha,a}$.
Thus by \eqref{eq:intstep3}
\begin{equation*}
\E^x[e^{-|\lambda_0|T_a}]\ge C^{(3)}_{d,m,\alpha,a}j_{m,\alpha}(|x|), \quad |x|\ge R_{d,m,\alpha,a}.
\end{equation*}
Combining this with Corollary \ref{cor:goodlowbound} and Theorem \ref{thm:upboundLap}, we obtain
\begin{equation*}
\E^x[e^{-|\lambda_0|T_a}] \asymp j_{m,\alpha}(|x|), \quad |x|\ge a,
\end{equation*}
where the comparability constants depend on $d,\alpha,m,a,v,|\lambda_0|$.
\end{proof}

\begin{remark}
\label{rem:nolambda2}
{\rm
By Remark \ref{rem:nolambda} we have similarly
\begin{multline*}
C^{(1)}_{d,m,\alpha,a}\varphi_0(\mathbf{a})e^{-C^{(2)}_{d,m,\alpha,a}|\lambda_0|}
\left(1+\frac{v-|\lambda_0|}{\lambda_a-v+|\lambda_0|}\left(\frac{a-|x|}{a}\right)^{\alpha/2}\right)
\\
\le \varphi_0(x)\le C^{(3)}_{d,m,\alpha,a}\varphi_0(\mathbf{a})\left(1+\frac{v-|\lambda_0|}{\lambda_a-v+|\lambda_0|}
\left(\frac{a-|x|}{a}\right)^{\alpha/2}\right),
\end{multline*}
for $|x|\le a$ it holds and with constants which depend only on $d,m,\alpha,a$ (and not on $v$ and $\lambda_0$).
}
\end{remark}

The local estimates on $\varphi_0$ can further be improved to see the behaviour as $|x| \to a$.
\begin{proposition}
There exist $\varepsilon=\varepsilon_{d,m,\alpha,a,v},C_{d,m,\alpha,a,v}>0$ such that for every $x \in
\cB_{R+\varepsilon} \setminus\cB_{R-\varepsilon}$
\begin{equation*}
\left|\frac{\varphi(x)}{\varphi(\mathbf{a})}-1\right|\le C_{d,m,\alpha,a,v}\big| |x|-a \big|^{\alpha/2}
\end{equation*}
holds.
\end{proposition}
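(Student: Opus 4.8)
The plan is to split the annulus around $\partial\cB_a$ into the inner shell $\{|x|\le a\}$ and the outer shell $\{|x|\ge a\}$, and to use the fact that, by monotonicity of $\varphi_0$ (Proposition~\ref{ass:mon}), one of the two inequalities hidden in the absolute value is automatic in each shell: for $|x|\le a$ one has $\varphi_0(x)\ge\varphi_0(\mathbf{a})$, hence $\varphi_0(x)/\varphi_0(\mathbf{a})-1\ge 0$, while for $|x|\ge a$ one has $\varphi_0(x)\le\varphi_0(\mathbf{a})$, hence $\varphi_0(x)/\varphi_0(\mathbf{a})-1\le 0$. So it suffices to prove, inside, the upper bound $\varphi_0(x)/\varphi_0(\mathbf{a})-1\le C(a-|x|)^{\alpha/2}$, and, outside, the lower bound $\varphi_0(x)/\varphi_0(\mathbf{a})-1\ge -C(|x|-a)^{\alpha/2}$.

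For the inner shell, I would start from $\varphi_0(x)=\E^x[e^{(v-|\lambda_0|)\tau_a}\varphi_0(X_{\tau_a})]$, obtained from Proposition~\ref{prop:stop} with the $\mathbb{P}$-a.s.\ finite stopping time $\tau_a$, set $\beta:=v-|\lambda_0|>0$, and decompose
\begin{equation*}
\varphi_0(x)-\varphi_0(\mathbf{a})=\E^x\big[(e^{\beta\tau_a}-1)\varphi_0(X_{\tau_a})\big]+\big(\E^x[\varphi_0(X_{\tau_a})]-\varphi_0(\mathbf{a})\big).
\end{equation*}
Since $|X_{\tau_a}|\ge a$ almost surely and $\rho_0$ is non-increasing, $\varphi_0(X_{\tau_a})\le\varphi_0(\mathbf{a})$, so the second bracket is $\le 0$ and the first term is at most $\varphi_0(\mathbf{a})\E^x[e^{\beta\tau_a}-1]$ (here $e^{\beta\tau_a}-1\ge 0$). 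Since $\beta<\lambda_a$ by \eqref{eq:upboundeig}, Theorem~\ref{thm:CompMomGenFun} applies and bounds $\E^x[e^{\beta\tau_a}-1]$ by $C_{d,m,\alpha,a}\frac{\beta}{\lambda_a-\beta}\left(\frac{a-|x|}{a}\right)^{\alpha/2}$; dividing by $\varphi_0(\mathbf{a})$ gives the inside estimate with a constant depending only on $d,m,\alpha,a,v$ (through $\beta$, $\lambda_0$ and $\lambda_a$), valid for all $|x|\le a$.

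For the outer shell, I would invoke \eqref{eq:intpass2} from Step~2 of the proof of Theorem~\ref{mainth}, which already gives $\varphi_0(x)\ge\varphi_0(\mathbf{a})\E^x[e^{-|\lambda_0|T_a}]$, with the convention $e^{-|\lambda_0|T_a}=0$ on $\{T_a=\infty\}$ (legitimate since $|\lambda_0|>0$). Hence $1-\varphi_0(x)/\varphi_0(\mathbf{a})\le\E^x[1-e^{-|\lambda_0|T_a}]$, and by Proposition~\ref{prop:halflower}(1), applied with $\lambda=|\lambda_0|$, there exist $R^{(0)}:=R^{(0)}_{d,m,\alpha,a,|\lambda_0|}>a$ and a constant such that $\E^x[1-e^{-|\lambda_0|T_a}]\le C_{d,m,\alpha,a,|\lambda_0|}(|x|-a)^{\alpha/2}$ for $a\le|x|\le R^{(0)}$. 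Setting $\varepsilon:=R^{(0)}_{d,m,\alpha,a,|\lambda_0|}-a>0$, which depends only on $d,m,\alpha,a,v$ since $\lambda_0=\lambda_0(d,m,\alpha,a,v)$, and taking $C_{d,m,\alpha,a,v}$ to be the maximum of the two constants found above completes the proof for all $x\in\cB_{a+\varepsilon}\setminus\cB_{a-\varepsilon}$.

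The argument is short precisely because monotonicity kills one side for free in each shell and the two time-functional estimates of Section~3 are exactly the right inputs for the other side; the only point that needs care is the sign bookkeeping in the decomposition above, namely realising that the ``boundary term'' $\E^x[\varphi_0(X_{\tau_a})]-\varphi_0(\mathbf{a})$ may simply be dropped rather than controlled through the overshoot distribution of $|X_{\tau_a}|-a$ via the Ikeda--Watanabe formula.
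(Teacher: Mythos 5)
Your proof is correct and takes essentially the same route as the paper's: inside the well it amounts to the bound \eqref{eq:intpass1} combined with Theorem \ref{thm:CompMomGenFun} (your decomposition simply re-derives \eqref{eq:intpass1} before dropping the nonpositive boundary term), and outside it is \eqref{eq:intpass2} together with Proposition \ref{prop:halflower}(1), with $\varepsilon$ determined by $R^{(0)}_{d,m,\alpha,a,|\lambda_0|}$. The only cosmetic difference is that the paper sets $\varepsilon=(R^{(0)}_{d,m,\alpha,a,v}-a)\wedge a$ so that $a-\varepsilon\ge 0$, which is immaterial since your interior estimate holds on all of $\cB_a$.
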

\begin{proof}
The estimate is clear once $x \in \partial \cB_a$. Consider first the case $x \in \cB_a$. By \eqref{eq:intpass1}
we have
\begin{equation*}
\frac{\varphi(x)}{\varphi(\mathbf{a})}-1\le \E^x[e^{(v-|\lambda_0|)\tau_a}-1]\le C_{d,m,\alpha,a,v}(a-|x|)^{\alpha/2},
\end{equation*}
where we used Theorem \ref{thm:CompMomGenFun}. Taking $x \in \cB_a^c$, we have by \eqref{eq:intpass2},
\begin{equation*}
1-\frac{\varphi(x)}{\varphi(\mathbf{a})}\le \E^x[1-e^{-|\lambda_0|T_a}].
\end{equation*}
Choosing $R^{(0)}_{d,m,\alpha,a,v}$ as in Proposition \ref{prop:halflower} and defining $\varepsilon=
(R^{(0)}_{d,m,\alpha,a,v}-a)\wedge a$ the result follows.
\end{proof}

By using the normalization condition $\Norm{\varphi_0}{2}=1$, we are able to provide a two-sided bound on
$\varphi_0(\mathbf{a})$.
\begin{proposition}
\label{ata}
Denote $\mathcal I = \int_1^{\infty}r^{d-1}j^2_{m,\alpha}\left(\frac{r}{a}\right)dr$ and by $B(x,y)$ the usual
Beta-function. Then
\begin{multline*}
\varphi_0(\mathbf{a}) \asymp
\left(a^d d\omega_d\left(\frac{1}{d}+2\frac{v-|\lambda_0|}{\lambda_a-v+|\lambda_0|}B\left(d,1+\frac{\alpha}{2}\right)
+\left(\frac{v-|\lambda_0|}{\lambda_a-v+|\lambda_0|}\right)^2B\left(d,1+\alpha\right)
+\mathcal I \right)\right)^{-\frac{1}{2}},
\end{multline*}
where the comparability constant is the same as in Corollary \ref{cor:asymp}.
\end{proposition}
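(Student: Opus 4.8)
The plan is to square the pointwise two-sided bound of Corollary \ref{cor:asymp} and integrate over $\R^d$, using the normalization $\Norm{\varphi_0}{2}=1$. Set $c=\frac{v-|\lambda_0|}{\lambda_a-v+|\lambda_0|}$, which is strictly positive by \eqref{eq:upboundeig}, and write Corollary \ref{cor:asymp} as $\varphi_0(x)\asymp \varphi_0(\mathbf a)\Phi(x)$ with a single comparability constant $C$ (the one appearing there), where
\begin{equation*}
\Phi(x)=1+c\Big(\frac{a-|x|}{a}\Big)^{\alpha/2} \ \text{ for } |x|\le a, \qquad \Phi(x)=j_{m,\alpha}(|x|) \ \text{ for } |x|\ge a.
\end{equation*}
Squaring preserves a two-sided bound with constant $C^2$, and since $\varphi_0(\mathbf a)$ is a fixed number, integration gives
\begin{equation*}
1=\int_{\R^d}\varphi_0^2(x)\,dx\asymp \varphi_0^2(\mathbf a)\Big(\int_{\cB_a}\Phi^2(x)\,dx+\int_{\cB_a^c}\Phi^2(x)\,dx\Big)
\end{equation*}
with comparability constant $C^2$; taking the reciprocal square root then yields a bound on $\varphi_0(\mathbf a)$ with constant exactly $C$, which is why the comparability constant in the statement coincides with that of Corollary \ref{cor:asymp}.

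It remains to evaluate the two integrals of $\Phi^2$. For the interior contribution I would pass to polar coordinates, substitute $r=as$, and expand the square, obtaining
\begin{equation*}
\int_{\cB_a}\Phi^2(x)\,dx=d\omega_d a^d\int_0^1 s^{d-1}\big(1+2c(1-s)^{\alpha/2}+c^2(1-s)^{\alpha}\big)\,ds,
\end{equation*}
and then identify the three resulting integrals as $\frac1d$, $B\!\left(d,1+\frac{\alpha}{2}\right)$ and $B(d,1+\alpha)$ via the standard formula $\int_0^1 s^{d-1}(1-s)^{\beta-1}\,ds=B(d,\beta)$. For the exterior contribution, polar coordinates together with a rescaling of the radial variable give $\int_{\cB_a^c}\Phi^2(x)\,dx=d\omega_d a^d\,\mathcal I$ with $\mathcal I$ as defined in the statement. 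Adding the two pieces,
\begin{equation*}
\int_{\R^d}\Phi^2(x)\,dx=a^d d\omega_d\Big(\tfrac1d+2c\,B\!\left(d,1+\tfrac{\alpha}{2}\right)+c^2B(d,1+\alpha)+\mathcal I\Big),
\end{equation*}
and substituting this into the comparability above and taking the inverse square root gives the claimed formula.

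The only points requiring a word of care are elementary. Corollary \ref{cor:asymp} supplies the exterior estimate for all $|x|\ge a$, not merely for large $|x|$, so the integral over the whole of $\cB_a^c$ may indeed be replaced by the corresponding integral of $j_{m,\alpha}^2$; all quantities involved are finite, the Beta integrals because $d\ge 1$ and $\alpha\in(0,2)$, and $\mathcal I$ because $r^{d-1}j_{m,\alpha}^2$ is integrable near infinity (polynomially when $m=0$, exponentially when $m>0$), while the interior integrand is bounded. Moreover the expression in parentheses is bounded below by $\frac1d>0$, so no degeneracy occurs and the reciprocal square root is well defined. I do not expect any genuine obstacle here: the content is just the bookkeeping of the Beta-function identification and the radial rescalings.
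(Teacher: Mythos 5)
Your proposal is correct and follows essentially the same route as the paper's own proof: square the two-sided bound of Corollary \ref{cor:asymp} separately on $\cB_a$ and $\cB_a^c$, integrate, invoke $\Norm{\varphi_0}{2}=1$, and evaluate the interior integral via the Beta-function identities, exactly as the paper does (the paper leaves the final evaluation implicit). One small remark: your radial rescaling of the exterior integral actually produces $a^d d\omega_d\int_1^\infty r^{d-1}j^2_{m,\alpha}(ar)\,dr$, not the integrand $j^2_{m,\alpha}(r/a)$ appearing in the statement's definition of $\mathcal I$ — this points to a typo in the statement rather than a gap in your argument.
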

\begin{proof}
We write $\kappa = \frac{v-|\lambda_0|}{\lambda_a-v+|\lambda_0|}$ for a shorthand.
Consider $|x| \le a$. By Corollary \ref{cor:asymp} we have
$$
\frac{1}{C_{d,m,\alpha,a,v,|\lambda_0|}}\varphi_0(\mathbf{a})\left(1+\kappa
\left(\frac{a-|x|}{a}\right)^{\frac{\alpha}{2}}\right)
\le
\varphi_0(x)
\le
C_{d,m,\alpha,a,v,|\lambda_0|}\varphi_0(\mathbf{a})
\left(1+\kappa\right)\left(\frac{a-|x|}{a}\right)^{\frac{\alpha}{2}},
$$
which gives
\begin{equation*}
\frac{1}{C_{d,m,\alpha,a,v,|\lambda_0|}}\varphi_0(x)
\le \varphi_0(\mathbf{a})\left(1+\kappa \left(\frac{a-|x|}{a}\right)^{\frac{\alpha}{2}}\right)
\le C_{d,m,\alpha,a,v,|\lambda_0|}\varphi_0(x).
\end{equation*}
Taking the square on both sides and integrating over $\cB_a$ we get
\begin{multline}
\label{eq:estphia1}
\frac{1}{(C_{d,m,\alpha,a,v,|\lambda_0|})^2}\int_{\cB_a}\varphi^2_0(x)dx \le \varphi_0^2(\mathbf{a})
\int_{\cB_a}\left(1+\kappa
\left(\frac{a-|x|}{a}\right)^{\frac{\alpha}{2}}\right)^2dx \\
\le (C_{d,m,\alpha,a,v,|\lambda_0|})^2\int_{\cB_a}\varphi^2_0(x) dx
\end{multline}
Consider next $|x|>a$. Proceeding similarly, we have
\begin{equation}
\label{eq:estphia2}
\frac{1}{(C_{d,m,\alpha,a,v,|\lambda_0|})^2}\int_{\cB^c_a}\varphi^2_0(x)dx \le
\varphi_0^2(\mathbf{a})\int_{\cB^c_a}j^2_{m,\alpha}(|x|)dx \\
\le (C_{d,m,\alpha,a,v,|\lambda_0|})^2\int_{\cB^c_a}\varphi^2_0(x) dx.
\end{equation}
Adding up \eqref{eq:estphia1}-\eqref{eq:estphia2} and using that $\Norm{\varphi_0}{2}=1$, we get
\begin{equation*}
\label{eq:estphia3}
\frac{1}{(C_{d,m,\alpha,a,v,|\lambda_0|})^2} \le
\varphi_0^2(\mathbf{a})\left(\int_{\cB_a}\left(1+\kappa\left(\frac{a-|x|}{a}\right)^{\frac{\alpha}{2}}\right)^2dx
+\int_{\cB^c_a}j^2_{m,\alpha}(|x|)dx\right) \\ \le (C_{d,m,\alpha,a,v,|\lambda_0|})^2.
\end{equation*}
Evaluating the integrals and taking the square root we obtain the desired result.
\end{proof}
As a direct consequence, we can rewrite Corollary \ref{cor:asymp} as follows.
\begin{corollary}\label{cor:asymp2}
	With the same notations as in Theorem \ref{mainth} we have
	\begin{equation*}
		\varphi_0(x) \asymp
		\; \left\{
		\begin{array}{lll}
			1+\frac{v-|\lambda_0|}{\lambda_a-v+|\lambda_0|}\left(\frac{a-|x|}{a}\right)^{\alpha/2}
			& \mbox{if \; $|x|\le a$} \vspace{0.2cm} \\
			j_{m,\alpha}(|x|)
			& \mbox{if \; $|x|\ge a$},
		\end{array}\right.
	\end{equation*}
	where the comparability constant depends on $d,m,\alpha,a,v,|\lambda_0|$ and is independent of $\varphi_0$.
\end{corollary}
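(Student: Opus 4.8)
The plan is to combine the sharp profile from Corollary~\ref{cor:asymp} with the two-sided control of the reference value $\varphi_0(\mathbf{a})$ provided by Proposition~\ref{ata}, and then observe that the resulting prefactor is a genuine constant, carrying no dependence on the ground state beyond the parameters already listed. Write $Q(x)$ for the candidate profile on the right-hand side of the claim,
\begin{equation*}
Q(x)=
\begin{cases}
1+\dfrac{v-|\lambda_0|}{\lambda_a-v+|\lambda_0|}\Big(\dfrac{a-|x|}{a}\Big)^{\alpha/2} & \text{if }|x|\le a,\\[2mm]
j_{m,\alpha}(|x|) & \text{if }|x|\ge a,
\end{cases}
\end{equation*}
and let $\Theta=\Theta(d,m,\alpha,a,v,|\lambda_0|)$ be the explicit positive quantity on the right-hand side of Proposition~\ref{ata}, that is, $\Theta=\big(a^d d\omega_d(\tfrac1d+2\kappa\,B(d,1+\tfrac\alpha2)+\kappa^2 B(d,1+\alpha)+\mathcal I)\big)^{-1/2}$ with $\kappa=\frac{v-|\lambda_0|}{\lambda_a-v+|\lambda_0|}$ and $\mathcal I=\int_1^\infty r^{d-1}j^2_{m,\alpha}(r/a)\,dr$.

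First I would check that $\Theta$ depends only on $d,m,\alpha,a,v,|\lambda_0|$ and is finite and strictly positive. Indeed $\lambda_a$ is the principal Dirichlet eigenvalue of $L_{m,\alpha}$ on $\cB_a$, hence a function of $d,m,\alpha,a$ alone; $\mathcal I$ likewise depends only on $d,m,\alpha,a$; by \eqref{eq:upboundeig} one has $\kappa>0$; and $\mathcal I<\infty$ since $j_{m,\alpha}(r)$ decays like $r^{-d-\alpha}$ for $m=0$ and exponentially for $m>0$, so that $r^{d-1}j^2_{m,\alpha}(r/a)$ is integrable on $[1,\infty)$. Hence $0<\Theta<\infty$, and $\Theta$ carries no dependence on $\varphi_0$ itself.

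Then I would chain the comparisons. By Corollary~\ref{cor:asymp} there is $C_1=C_1(d,m,\alpha,a,v,|\lambda_0|)\ge 1$ with $\varphi_0(x)\asymp\varphi_0(\mathbf{a})\,Q(x)$; by Proposition~\ref{ata} the \emph{same} constant gives $\varphi_0(\mathbf{a})\asymp\Theta$. Multiplying, $\tfrac{1}{C_1^2}\,\Theta\,Q(x)\le\varphi_0(x)\le C_1^2\,\Theta\,Q(x)$ for all $x\in\R^d$. Since, with $d,m,\alpha,a,v,|\lambda_0|$ fixed, $\Theta$ is a fixed positive real, it may be absorbed into the comparability constant: with $C:=C_1^2\max\{\Theta,\Theta^{-1}\}$ we obtain $\tfrac1C\,Q(x)\le\varphi_0(x)\le C\,Q(x)$, where $C$ depends only on $d,m,\alpha,a,v,|\lambda_0|$ and, in particular, not on $\varphi_0$. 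This is exactly the assertion.

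I do not expect any real obstacle: the statement is a repackaging of Corollary~\ref{cor:asymp} and Proposition~\ref{ata}. The only points deserving an explicit word are the finiteness and positivity of $\Theta$ (handled above) and the bookkeeping of which data the comparability constant may depend on — the substance being that once the listed parameters are fixed, $\varphi_0(\mathbf{a})$ is pinned down up to a factor uniform in $\varphi_0$, so it can legitimately be dropped from the estimate.
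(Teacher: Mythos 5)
Your argument is correct and is exactly the route the paper takes: Corollary \ref{cor:asymp2} is stated there as a direct consequence of Corollary \ref{cor:asymp} combined with the two-sided bound on $\varphi_0(\mathbf{a})$ from Proposition \ref{ata}, with the explicit quantity $\Theta$ absorbed into the comparability constant. Your additional remarks on the finiteness and positivity of $\Theta$ (via \eqref{eq:upboundeig} and the decay of $j_{m,\alpha}$) are just the bookkeeping the paper leaves implicit.
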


\subsection{Lack of regularity of $\varphi_0$}
From a quick asymptotic analysis of the profile functions appearing in the estimates in Corollary \ref{cor:asymp}
the difference of the leading terms suggests that, while the regime change around the boundary of the potential
well is continuous, it cannot be smooth beyond a degree. To describe this quantitatively, we show next a lack of
regularity of the ground state arbitrarily close to the boundary. For a result on H\"older regularity of solutions
of related non-local Schr\"odinger equations see \cite{L16}.

\begin{lemma}\label{eq:propcont1}
Consider the operator $L_{m,\alpha}$ and the following two cases:
\begin{enumerate}
\item[(1)]
$\alpha \in (0,1)$ and $f \in C^{\alpha+\delta}_{\rm loc}(\R^d) \cap L^\infty(\R^d)$ for some $\delta
\in (0,1-\alpha)$
\vspace{0.1cm}
\item[(2)]
$\alpha \in [1,2)$ and $f \in C^{1,\alpha+\delta-1}_{\rm loc}(\R^d) \cap L^\infty(\R^d)$ for some $\delta \in
(0,2-\alpha)$.
\end{enumerate}
In either case above, the function $\R^d \ni x \mapsto L_{m,\alpha}f(x)$ is continuous.
\end{lemma}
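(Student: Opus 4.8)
The plan is to work directly with the L\'evy-type integral representation \eqref{intrep} of $L_{m,\alpha}$, extended from $C_{\rm c}^\infty$ to the function classes in the statement, and to exhibit $L_{m,\alpha}f(x)$ as an \emph{absolutely} convergent integral that depends continuously on $x$. Since $\nu_{m,\alpha}$ is rotationally symmetric, after the substitution $y=x+h$ and a symmetrization in $h$ one may write, at least formally,
\[
L_{m,\alpha}f(x) = -\frac{1}{2}\int_{\R^d}\big(f(x+h)+f(x-h)-2f(x)\big)\,j_{m,\alpha}(|h|)\,dh
=: -\frac{1}{2}\int_{\R^d}\delta_h^2 f(x)\,j_{m,\alpha}(|h|)\,dh .
\]
First I would check absolute convergence. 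Recall $j_{m,\alpha}(|h|)\asymp |h|^{-d-\alpha}$ as $|h|\downarrow 0$ (from the small-$|h|$ behaviour of $j_{m,\alpha}$ recalled in Section~2.2 together with \eqref{dec}), while $\int_{|h|\ge 1}j_{m,\alpha}(|h|)\,dh<\infty$ because $\nu_{m,\alpha}$ is a L\'evy measure. For the near-origin part one estimates $\delta_h^2 f$ by the local regularity of $f$: in case (1), $|\delta_h^2 f(x)|\le 2[f]_{C^{\alpha+\delta}(\cB_1(x))}|h|^{\alpha+\delta}$ for $|h|\le 1$; in case (2), writing $\delta_h^2 f(x)=\int_0^1\big(\nabla f(x+sh)-\nabla f(x-sh)\big)\cdot h\,ds$ and using the local $C^{\alpha+\delta-1}$-seminorm of $\nabla f$ gives $|\delta_h^2 f(x)|\le C[\nabla f]_{C^{\alpha+\delta-1}(\cB_1(x))}|h|^{\alpha+\delta}$. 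In either case $|\delta_h^2 f(x)|\,j_{m,\alpha}(|h|)\le C_{f,x}|h|^{\delta-d}$ near $0$, integrable precisely because $\delta>0$; away from $0$, $|\delta_h^2 f(x)|\le 4\Norm{f}{\infty}$ and $j_{m,\alpha}$ is integrable there. (This symmetrization disposes of the principal value in \eqref{intrep}; in case (1) one could equally argue with the non-symmetrized integrand, whereas for $\alpha\ge 1$ the second difference is essential.)

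For continuity, fix $x_0\in\R^d$ and restrict to $x\in\cB_1(x_0)$; let $K=\overline{\cB_2(x_0)}$, on which the relevant local seminorm of $f$ (resp. of $\nabla f$) is bounded by some $C_K$. Split $L_{m,\alpha}f(x)=I_r(x)+I\!I_r(x)$, with $I_r(x)=-\frac12\int_{|h|\le r}\delta_h^2 f(x)\,j_{m,\alpha}(|h|)\,dh$ and $I\!I_r(x)=-\frac12\int_{|h|>r}\delta_h^2 f(x)\,j_{m,\alpha}(|h|)\,dh$, where $0<r\le 1$. For the singular piece the bounds above give $\sup_{x\in\cB_1(x_0)}|I_r(x)|\le C_K'\int_{|h|\le r}|h|^{\delta-d}\,dh=C_K''\,r^{\delta}\to 0$ as $r\downarrow 0$, uniformly in $x$. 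For the remaining piece, for each fixed $r$ the map $x\mapsto \delta_h^2 f(x)\,j_{m,\alpha}(|h|)$ is continuous on $\cB_1(x_0)$ by continuity of $f$, and is dominated there, uniformly in $x$, by $C_K'|h|^{\delta-d}\mathbf 1_{\{r<|h|\le 1\}}+4\Norm{f}{\infty}j_{m,\alpha}(|h|)\mathbf 1_{\{|h|>1\}}\in L^1(\R^d)$; hence $I\!I_r$ is continuous on $\cB_1(x_0)$ by dominated convergence. Thus on $\cB_1(x_0)$ the function $L_{m,\alpha}f$ is a uniform limit as $r\downarrow 0$ of continuous functions, so it is continuous at $x_0$; as $x_0$ is arbitrary, the claim follows.

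The argument is essentially routine; the only points requiring a little care are obtaining the second-difference bounds $|\delta_h^2 f(x)|\lesssim |h|^{\alpha+\delta}$ with constants that are \emph{locally uniform in} $x$ (this is what makes the split work uniformly), and identifying the symmetrized, absolutely convergent integral with \eqref{intrep} — where rotational symmetry of $\nu_{m,\alpha}$ enters and the need for a principal value disappears. I do not anticipate a genuinely hard step.
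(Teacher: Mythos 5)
Your proof is correct and follows essentially the same route as the paper's: interpret $L_{m,\alpha}f$ pointwise via \eqref{intrep}, use the local H\"older (resp.\ gradient-H\"older) bounds to get an integrable $|h|^{\delta-d}$ singularity near the origin and $\Norm{f}{\infty}$ plus integrability of $j_{m,\alpha}$ at infinity for the far part, then pass to the limit. The only cosmetic differences are that you use the symmetrized second difference in both cases (the paper does so only for $\alpha\ge 1$, citing \cite{AL} for the identification with \eqref{intrep}) and you conclude continuity by uniform approximation with the truncated integrals rather than a single direct dominated-convergence argument.
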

\begin{proof}
Note that under the assumptions above, $L_{m,\alpha}f$ is well-defined pointwise via the integral representation \eqref{intrep}.
We show the statement for $m=0$ only, for $m>0$ the proof is similar by using the asymptotic behaviour of $j_{m,\alpha}(r)$
around zero and at infinity.

To prove (1), we use the integral representation \eqref{intrep} and claim that in this case
\begin{equation*}
L_{0,\alpha}f(x)=-C^{(1)}_{d,\alpha}\lim_{\varepsilon \downarrow 0}\left(\int_{\varepsilon<|x-y|<1}+\int_{|x-y|>1}\right)
\frac{f(y)-f(x)}{|x-y|^{d+\alpha}}dy = -C^{(1)}_{d,\alpha}\int_{\R^d}\frac{f(y)-f(x)}{|x-y|^{d+\alpha}}dy,
\end{equation*}
with the constant $C^{(1)}_{d,\alpha}$ entering the definition of the massless operator. Indeed, note that the second integral
in the split is independent of $\varepsilon$, while for the first integral we can use the H\"older inequality giving
\begin{equation*}
\int_{\varepsilon<|x-y|<1}\frac{|f(y)-f(x)|}{|x-y|^{d+\alpha}}dy\le C^{(2)}\int_{\varepsilon<|x-y|<1}
\frac{1}{|x-y|^{d-\delta}}\le dC^{(2)}\omega_d\int_{0}^1\frac{1}{\rho^{1-\delta}}d\rho=\frac{dC^{(2)}\omega_d}{\delta}.
\end{equation*}
The claimed right hand side follows then by dominated convergence. Next choosing $h \in \R^d$, $|h|<1$, we show that
$\lim_{h \to 0}L_{0,\alpha}f(x+h)=L_{0,\alpha}f(x)$. We write
\begin{align*}
L_{0,\alpha}f(x+h)&=-C^{(1)}_{d,\alpha}\int_{\R^d}\frac{f(y)-f(x+h)}{|x+h-y|^{d+\alpha}}dy
=-C^{(1)}_{d,\alpha}\left(\int_{\cB_3(x+h)}+\int_{\cB^c_3(x+h)}\right)\frac{f(y)-f(x+h)}{|x+h-y|^{d+\alpha}}dy.
\end{align*}
To estimate the first integral, note that $\cB_3(x+h)\subseteq \cB_4(x)$ for every $h \in \cB_1$. Let $C^{(3)}$ be
the H\"older constant associated with $\overline{\cB}_4(x)$ and observe that
\begin{align*}
\int_{\cB_3(x+h)}\frac{|f(y)-f(x+h)|}{|x+h-y|^{d+\alpha}}dy=\int_{\cB_3}\frac{|f(x+h+y)-f(x+h)|}{|y|^{d+\alpha}}dy
\le C^{(3)}\int_{\cB_3}\frac{dy}{|y|^{d-\delta}}=
\frac{3^\delta C^{(3)}d\omega_d}{\delta}.
\end{align*}
For the second integral, observe that if $y \in \cB_2(x)$, then $|x+h-y|\le |x-y|+|h|<3$ so that $y \in \cB_3(x+h)$ for
any $h \in \cB_1$. This means that $\cB^c_3(x+h)\subseteq \cB_2^c(x)$ for all $h$ and then
\begin{align*}
\int_{\cB^c_3(x+h)}\frac{|f(y)-f(x+h)|}{|x+h-y|^{d+\alpha}}dy&\le \int_{\cB^c_2(x)}\frac{|f(y)-f(x+h)|}{|x+h-y|^{d+\alpha}}dy\\
&\le 2\Norm{f}{\infty}\int_{\cB^c_2(x)}\frac{dy}{(|x-y|-|h|)^{d+\alpha}}\\
&\le 2\Norm{f}{\infty}d\omega_d \int_{2}^{\infty}\frac{\rho^{d-1}}{(\rho-1)^{d+\alpha}}d\rho<\infty.
\end{align*}
Thus again we can use dominated convergence to prove the claim.

Next consider (2). Fix $x \in \R^d$ and define the function
\begin{equation*}
\cB_1 \ni h  \mapsto D_hf(x):=f(x+h)-2f(x)+f(x-h).
\end{equation*}
By Lagrange's theorem there exist $\xi_\pm(h) \in [x,x \pm h]$, where $[x,y]$ denotes the segment with endpoints $x, y$,
such that
\begin{equation*}
f(x+h)-2f(x)+f(x-h)=\langle \nabla f(\xi_{+}(h))-\nabla f(\xi_{-}(h)), h\rangle
\end{equation*}
and thus
$|D_hf(x)|\le |\nabla f(\xi_{+}(h))-\nabla f(\xi_{-}(h))||h|$.
Since $\xi_\pm(h) \in [x,x \pm h]$, in particular $\xi_\pm(h) \in \cB_1(x)$, and we can use the H\"older property of
the gradient to conclude that
\begin{equation*}
|\nabla f(\xi_{+}(h))-\nabla f(\xi_{-}(h))|\le C^{(1)}(x)|\xi_{+}(h)-\xi_{-}(h)||h|^{\alpha+\delta-1}.
\end{equation*}
Moreover, $|\xi_{+}(h)-\xi_{-}(h)|\le 2$, and thus
$|D_hf(x)|\le 2C^{(1)}(x)|h|^{\alpha+\delta}$.
Using that $\int_{0}^{1}\frac{1}{\rho^{1-\delta}}d\rho=\frac{1}{\delta}$, by an application of \cite[Prop. 2.6, Rem. 2.4]{AL}
we then obtain
\begin{equation*}
L_{0,\alpha}f(x)=-\frac{C^{(2)}_{d,\alpha}}{2}\int_{\R^d}\frac{D_hf(x)}{|h|^{d+\alpha}}dh, \quad x \in \R^d.
\end{equation*}
Taking $k \in \cB_1$, we show that $\lim_{k \to 0}L_{0,\alpha}f(x+k)=L_{0,\alpha}f(x)$. Write
\begin{align*}
L_{0,\alpha}f(x+k) =-\frac{C^{(2)}_{d,\alpha}}{2}\int_{\cB_3}\frac{D_hf(x+k)}{|h|^{d+\alpha}}dh-
\frac{C^{(2)}_{d,\alpha}}{2}\int_{\cB_3^c}\frac{D_hf(x+k)}{|h|^{d+\alpha}}dh.
\end{align*}
In the first integral we have $x+k \pm h \in \cB_4(x)$ for every $k \in \cB_1$ and $h \in \cB_3$, hence
$|D_hf(x+k)|\le 8C^{(3)}(x)|h|^{\alpha+\delta}$,
similarly to in the previous case, where $C^{(3)}(x)$ is the H\"older constant of $\nabla f$ in $\overline{\cB}_4(x)$.
Thus we obtain
\begin{align*}
\int_{\cB_3}\frac{|D_hf(x+k)|}{|h|^{d+\alpha}}dh
\le \frac{8C^{(3)}d\omega 3^\delta}{\delta} \int_{\cB_3}\frac{dh}{|h|^{d-\delta}}.
\end{align*}
For the second integral, using that $f \in L^\infty(\R^d)$ we get
\begin{equation*}
\int_{\cB_3^c}\frac{|D_hf(x+k)|}{|h|^{d+\alpha}}dh\le 4\Norm{f}{\infty}\int_{\cB_3^c}\frac{dh}{|h|^{d+\alpha}}<\infty.
\end{equation*}
The proof is then completed by dominated convergence.
\end{proof}
\begin{theorem}
\label{irreg}
Let $\varphi_0$ be the ground state of $H_{m,\alpha}$. The following hold:
\begin{enumerate}
\item[(1)]
If $\alpha \in (0,1)$, then $\varphi_0 \not \in C^{\alpha+\delta}_{\rm loc}(\R^d)$ for every $\delta \in (0,1-\alpha)$.
\vspace{0.1cm}
\item[(2)]
If $\alpha \in [1,2)$, then $\varphi_0 \not \in C^{1,\alpha+\delta-1}_{\rm loc}(\R^d)$ for every $\delta \in (0,2-\alpha)$.
\end{enumerate}
\end{theorem}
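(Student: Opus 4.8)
The plan is to argue by contradiction, combining the eigenvalue equation with the continuity criterion of Lemma \ref{eq:propcont1}. Suppose, contrary to (1), that $\varphi_0 \in C^{\alpha+\delta}_{\rm loc}(\R^d)$ for some $\delta \in (0,1-\alpha)$ when $\alpha \in (0,1)$, or, contrary to (2), that $\varphi_0 \in C^{1,\alpha+\delta-1}_{\rm loc}(\R^d)$ for some $\delta \in (0,2-\alpha)$ when $\alpha \in [1,2)$. Since all eigenfunctions of $H_{m,\alpha}$ are bounded (Section 2.2), in either case $\varphi_0$ satisfies the hypotheses of Lemma \ref{eq:propcont1}, so the function $x \mapsto L_{m,\alpha}\varphi_0(x)$, defined pointwise through the integral representation \eqref{intrep} (resp.\ its second-difference form), is continuous on all of $\R^d$. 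Here one uses that the argument of Lemma \ref{eq:propcont1} applies verbatim for $m \ge 0$.

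On the other hand, since $V=-v\mathbf{1}_{\cB_a}$ is bounded, $\Dom(H_{m,\alpha})=\Dom(L_{m,\alpha})$, and the eigenvalue equation $H_{m,\alpha}\varphi_0=\lambda_0\varphi_0$ reads
\begin{equation*}
L_{m,\alpha}\varphi_0=\lambda_0\varphi_0+v\mathbf{1}_{\cB_a}\varphi_0
\end{equation*}
as an identity in $L^2(\R^d)$. Under the above regularity the pointwise integral value coincides with this $L^2$-class: this is precisely what is behind the representation formulas employed in the proof of Lemma \ref{eq:propcont1}, see also \cite[Prop. 2.6, Rem. 2.4]{AL}. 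Consequently, the continuous function $L_{m,\alpha}\varphi_0$ agrees a.e.\ with $\lambda_0\varphi_0+v\mathbf{1}_{\cB_a}\varphi_0$; restricting to the open set $\cB_a$, where $\mathbf{1}_{\cB_a}\equiv 1$, and to the open set $\cB_a^c$, where $\mathbf{1}_{\cB_a}\equiv 0$, and using that both sides are continuous there, we obtain $L_{m,\alpha}\varphi_0(x)=(\lambda_0+v)\varphi_0(x)$ for every $x\in\cB_a$ and $L_{m,\alpha}\varphi_0(x)=\lambda_0\varphi_0(x)$ for every $x\in\cB_a^c$.

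Now fix a boundary point, e.g.\ $\mathbf{a}=(a,0,\ldots,0)\in\partial\cB_a$. Letting $x\to\mathbf{a}$ from inside $\cB_a$ and using the continuity of $\varphi_0$ and of $L_{m,\alpha}\varphi_0$ gives $L_{m,\alpha}\varphi_0(\mathbf{a})=(\lambda_0+v)\varphi_0(\mathbf{a})$, while letting $x\to\mathbf{a}$ from within $\cB_a^c$ gives $L_{m,\alpha}\varphi_0(\mathbf{a})=\lambda_0\varphi_0(\mathbf{a})$. Subtracting yields $v\varphi_0(\mathbf{a})=0$, which is impossible since $v>0$ and $\varphi_0(\mathbf{a})=\rho_0(a)>0$ by Proposition \ref{ass:mon} and the strict positivity of the chosen version of the ground state. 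This contradiction establishes both (1) and (2).

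The one genuinely delicate point is the identification, for $\varphi_0$ of the assumed H\"older class, of the pointwise (principal-value / second-difference) integral representation of $L_{m,\alpha}\varphi_0$ with the operator-theoretic action of $L_{m,\alpha}$ on $\varphi_0$; this is exactly where the regularity hypothesis is used, and it is supplied by the results quoted in Lemma \ref{eq:propcont1}. Once that identification is in hand, the rest is the short boundary continuity argument above, driven by the jump of $\mathbf{1}_{\cB_a}$ across $\partial\cB_a$.
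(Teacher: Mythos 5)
Your proof is correct and follows essentially the same route as the paper: under the assumed regularity, Lemma \ref{eq:propcont1} gives continuity of $L_{m,\alpha}\varphi_0$, while the eigenvalue identity forces $(v\mathbf{1}_{\cB_a}+\lambda_0)\varphi_0$ to have a jump across $\partial\cB_a$ unless $\varphi_0$ vanishes there, contradicting strict positivity. The only difference is that you spell out the identification of the pointwise (principal-value/second-difference) representation with the $L^2$-action of $L_{m,\alpha}$, a point the paper leaves implicit; this is a harmless and in fact welcome clarification.
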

\begin{proof}
We rewrite the eigenvalue equation like
\begin{equation}\label{eq:ScRev}
L_{m,\alpha}\varphi_0=(v\mathbf{1}_{\cB_a}+\lambda_0)\varphi_0.
\end{equation}
Suppose that $\alpha \in (0,1)$ and $\varphi_0 \in C^{\alpha+\delta}_{\rm loc}(\R^d)$ for some $\delta \in (0,1-\alpha)$.
Then by (1) of Lemma \ref{eq:propcont1} we have that the left-hand side of \eqref{eq:ScRev} is continuous. On the other hand,
take $\mathbf{e}_1=(1,0,\dots,0)$ and notice that
\begin{align*}
&\lim_{\varepsilon \downarrow 0}(v\mathbf{1}_{\cB_a}((a+\varepsilon)\mathbf{e}_1)
+\lambda_0)\varphi_0((a+\varepsilon)\mathbf{e}_1)=\lambda_0 \varphi_0(a\mathbf{e}_1)\\
&\lim_{\varepsilon \downarrow 0}(v\mathbf{1}_{\cB_a}((a-\varepsilon)\mathbf{e}_1)
+\lambda_0)\varphi_0((a-\varepsilon)\mathbf{e}_1)=(v+\lambda_0) \varphi_0(a\mathbf{e}_1),
\end{align*}
thus the right-hand side is continuous in $a\mathbf{e}_1$ if and only if $\varphi_0(a\mathbf{e}_1)=0$, which is in contradiction
with the fact that $\varphi_0$ is positive. In particular, the same argument holds for any point $x \in \partial \cB_a$, thus
the right-hand side of \eqref{eq:ScRev} has a jump discontinuity on $\partial \cB_a$, which is impossible since the left-hand
side is continuous. The same arguments hold for $\alpha \in [1,2)$ by using part (2) of Lemma \ref{eq:propcont1}.
\end{proof}
\begin{remark}
\label{bdryirreg}
\hspace{100cm}
{\rm
\begin{enumerate}
\item[(1)]
Instead of using $C_{\rm loc}^{\alpha+\delta}(\R^d)$ we also can prove part (1) of Lemma \ref{eq:propcont1} with $f \in
C^{\alpha+\delta}(\overline{\cB}_r(x))$ for some $x \in \R^d$, implying that $L_{m,\alpha}f$ is continuous in $x$. With this
localization argument we obtain for $\alpha \in (0,1)$ that $\varphi_0 \not \in C_{\rm loc}^{\alpha+\delta}(\cB_{a+\varepsilon}
\setminus \overline{\cB}_{a-\varepsilon})$, for all $\varepsilon \in (0,a)$ and $\delta \in (0,1-\alpha)$. In particular, this
implies that $\varphi_0$ cannot be $C^1$ on $\partial \cB_a$. The same arguments apply to part (2) of Lemma \ref{eq:propcont1}
and the case $\alpha \ge 1$, implying that $\varphi_0$ cannot be $C^2$ on $\partial \cB_a$. We note that for the classical
case the ground state is $C^1$ but fails to be $C^2$ at the boundary of the potential well.
\item[(2)]
It is reasonable to expect that $\varphi_0$ has at least a $C^{\alpha-\varepsilon}$-regularity, for all $\varepsilon >0$ small
enough, both inside and outside the potential well (away from the boundary). However, this needs different tools and we do not
pursue this point here.
\end{enumerate}
}
\end{remark}

\subsection{Moment estimates of the position in the ground state}
As an application of the local estimates of ground states we consider now the behaviour of the following functional.
Note that when the ground state is chosen to satisfy $\|\varphi_0\|_2=1$, the expression $\varphi_0^2(x)dx$ defines
a probability measure on $\R^d$. Let $p\geq 1$ and define
\begin{equation*}
\Lambda_p(\varphi_0) = \left(\int_{\R^d} |x|^p \varphi_0^2(x)dx\right)^{1/p},
\end{equation*}
which can then be interpreted as the $p$th moment of an $\R^d$-valued random variable under this probability
distribution. In the physics literature the ground state expectation for $p=2$ is called the size of the ground
state.

Let $m \ge 0$, $\alpha \in (0,2)$, and define
	\begin{equation*}
	p_*(m,\alpha):=\begin{cases}
	d+2\alpha & \mbox{if $m=0$}\\
	\infty &  \mbox{if $m>0$}.
	\end{cases}
	\end{equation*}

\begin{lemma}
The following cases occur:
\begin{enumerate}
\item
If $0<p<p_*(m,\alpha)$, then $\Lambda_p(\varphi_0)<\infty$.
\vspace{0.1cm}
\item
If $p \ge p_*(m,\alpha)$, then $\Lambda_p(\varphi_0)=\infty$.
\end{enumerate}
\end{lemma}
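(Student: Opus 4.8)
The plan is to split $\Lambda_p^p(\varphi_0)=\int_{\R^d}|x|^p\varphi_0^2(x)dx$ into contributions from inside and outside the potential well, estimate each using the two-sided bound of Corollary \ref{cor:asymp2}, and thereby reduce the claim to the convergence of an elementary radial integral.

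First I would write
\begin{equation*}
\Lambda_p^p(\varphi_0)=\int_{\cB_a}|x|^p\varphi_0^2(x)dx+\int_{\cB_a^c}|x|^p\varphi_0^2(x)dx .
\end{equation*}
The first summand is always finite, because $\varphi_0$ is bounded and continuous (Section 2.2) while $|x|^p\le a^p$ on the set $\cB_a$ of finite Lebesgue measure; hence $\Lambda_p(\varphi_0)$ is finite if and only if the second summand is. For that summand, Corollary \ref{cor:asymp2} yields $\varphi_0(x)\asymp j_{m,\alpha}(|x|)$ for $|x|\ge a$, with comparability constants independent of $\varphi_0$, so by passing to polar coordinates
\begin{equation*}
\int_{\cB_a^c}|x|^p\varphi_0^2(x)dx\asymp\int_{\cB_a^c}|x|^p j_{m,\alpha}^2(|x|)dx=d\omega_d\int_a^\infty r^{p+d-1}j_{m,\alpha}^2(r)dr .
\end{equation*}

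It then remains to decide when the last integral converges. For $m=0$ one has $j_{0,\alpha}(r)={\mathcal A}_{d,\alpha}r^{-d-\alpha}$, so the integral is a positive constant times $\int_a^\infty r^{p-d-2\alpha-1}dr$, which is finite exactly when $p-d-2\alpha-1<-1$, i.e. $p<d+2\alpha=p_*(0,\alpha)$; when $p\ge p_*(0,\alpha)$ the integral diverges, and by the lower comparability bound so does $\Lambda_p^p(\varphi_0)$, which proves both (1) and (2) in this case. For $m>0$ the asymptotics of $j_{m,\alpha}$ recalled in Section 2.1 give $j_{m,\alpha}(r)^2\le C\,r^{-(d+\alpha+1)}e^{-2m^{1/\alpha}r}$ for large $r$, so $r^{p+d-1}j_{m,\alpha}^2(r)\le C\,r^{p-\alpha-2}e^{-2m^{1/\alpha}r}$ is integrable on $(a,\infty)$ for every $p>0$, whence $\Lambda_p(\varphi_0)<\infty$ for all $p$, i.e. $p_*(m,\alpha)=\infty$ and (1) holds while (2) is vacuous.

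There is no serious obstacle here; the one point that genuinely matters is that the outer comparison $\varphi_0^2\asymp j_{m,\alpha}^2$ on $\cB_a^c$ is two-sided and uniform in $\varphi_0$, which is precisely what Corollary \ref{cor:asymp2} provides, so that the lower bound is actually available to force the divergence of $\Lambda_p^p(\varphi_0)$ in the critical and supercritical range $p\ge p_*(0,\alpha)$. Everything else is routine integral bookkeeping.
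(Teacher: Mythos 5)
Your proof is correct and follows essentially the same route as the paper: reduce the question to the integrability at infinity of $r^{p+d-1}j_{m,\alpha}^2(r)$ via the two-sided comparison of $\varphi_0$ with $j_{m,\alpha}$ outside the well (the paper cites Corollary \ref{cor:asymp}, you cite the equivalent Corollary \ref{cor:asymp2}), then use the exact polynomial form of $j_{0,\alpha}$ and the exponential decay of $j_{m,\alpha}$ for $m>0$. The only difference is that you spell out the inside/outside splitting and the lower-bound argument for divergence, which the paper leaves implicit.
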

\begin{proof}
It is a direct consequence of Corollary \ref{cor:asymp}, using that $j_{0,\alpha}(r)=C_{d,\alpha}
r^{-d-\alpha}$, and $j_{m,\alpha}(r)\approx  r^{-(d+\alpha+1)/2}e^{-m^{1/\alpha}r}$ as $r \to \infty$ if $m>0$.
Indeed, while for $m>0$ it is immediate, for $m=0$ we have
$\rho^{d-1+p}j^2_{0,\alpha}(\rho)=C_{d,\alpha}\rho^{-(d+1+2\alpha-p)}$,
so that it is integrable at infinity if and only if $d+2\alpha>p$.
\end{proof}
\begin{proposition}
\label{moments1}
Let $0<p<p_*(m,\alpha)$. Then there exist constants $C^{(1)}_{d,m,\alpha,a,p},
C^{(2)}_{d,m,\alpha,a}>0$ such that
\begin{equation*}
\Lambda_p(\varphi_0) \ge C^{(1)}_{d,m,\alpha,a,p}\varphi^{2/p}_0(\mathbf{a})
\left(\frac{\lambda_a}{\lambda_a-v+|\lambda_0|}\right)^{2/p}e^{-\frac{2}{p}C^{(2)}_{d,m,\alpha,a}|\lambda_0|}.
\end{equation*}
\end{proposition}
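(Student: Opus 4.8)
The plan is to bound $\Lambda_p(\varphi_0)^p = \int_{\R^d}|x|^p\varphi_0^2(x)\,dx$ from below by discarding everything outside a fixed annulus $A:=\cB_{a/2}\setminus\cB_{a/4}$ and applying the pointwise lower bound on $\varphi_0$ inside the well recorded in Remark \ref{rem:nolambda2}. The annulus is chosen so that the weight $|x|^p$ is bounded below (by $(a/4)^p$ on $A$) \emph{and}, simultaneously, the profile $1+\kappa\big((a-|x|)/a\big)^{\alpha/2}$, with $\kappa:=\frac{v-|\lambda_0|}{\lambda_a-v+|\lambda_0|}$, stays comparable to its full value $1+\kappa$. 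The point is the elementary identity
\[
1+\kappa=\frac{\lambda_a}{\lambda_a-v+|\lambda_0|},
\]
which is exactly the factor appearing in the claim and is a well-defined number $>1$ since $0<|\lambda_0|<v$ and, by \eqref{eq:upboundeig}, $\lambda_a-v+|\lambda_0|>0$; keeping the profile comparable to $1+\kappa$ on $A$ is what will produce the asserted power $(1+\kappa)^{2/p}$ rather than a mere $O(1)$ bound.

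For $x\in A$ one has $a/4\le|x|\le a/2$, hence $(a-|x|)/a\ge 1/2$, and since $\alpha\in(0,2)$ also $\big((a-|x|)/a\big)^{\alpha/2}\ge 2^{-\alpha/2}\ge 1/2$. Substituting this into the lower bound of Remark \ref{rem:nolambda2} and using $1+\tfrac12\kappa\ge\tfrac12(1+\kappa)$ gives, for every $x\in A$,
\[
\varphi_0(x)\ge \frac{C^{(1)}_{d,m,\alpha,a}}{2}\,\varphi_0(\mathbf{a})\,e^{-C^{(2)}_{d,m,\alpha,a}|\lambda_0|}\,\frac{\lambda_a}{\lambda_a-v+|\lambda_0|},
\]
with $C^{(1)}_{d,m,\alpha,a},C^{(2)}_{d,m,\alpha,a}$ the constants of Remark \ref{rem:nolambda2}, which depend only on $d,m,\alpha,a$. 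Consequently
\[
\Lambda_p(\varphi_0)^p\ge\int_A|x|^p\varphi_0^2(x)\,dx\ge\Big(\frac a4\Big)^p\Big(\frac{C^{(1)}_{d,m,\alpha,a}}{2}\Big)^2\varphi_0^2(\mathbf{a})\,e^{-2C^{(2)}_{d,m,\alpha,a}|\lambda_0|}\Big(\frac{\lambda_a}{\lambda_a-v+|\lambda_0|}\Big)^2|A|,
\]
with $|A|=\omega_d a^d\big(2^{-d}-4^{-d}\big)>0$. Taking $p$-th roots then yields the statement with $C^{(2)}_{d,m,\alpha,a}$ exactly as in Remark \ref{rem:nolambda2} and
\[
C^{(1)}_{d,m,\alpha,a,p}=\frac a4\Big(\tfrac14\big(C^{(1)}_{d,m,\alpha,a}\big)^2\,\omega_d a^d\big(2^{-d}-4^{-d}\big)\Big)^{1/p}.
\]

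There is no real obstacle here: the estimate is a direct corollary of the ground-state lower bound already established. The only points needing care are (i) to localise to an annulus bounded away from both the origin and $\partial\cB_a$, so that $|x|^p$ and the profile $1+\kappa(\ldots)^{\alpha/2}$ are simultaneously bounded below by constants times $1$ and $1+\kappa$ respectively — this is precisely what converts the crude bound into the sharp factor $\big(\lambda_a/(\lambda_a-v+|\lambda_0|)\big)^{2/p}$ — and (ii) to invoke the $v$- and $\lambda_0$-independent version of the lower bound (Remark \ref{rem:nolambda2}, coming from Remark \ref{rem:nolambda}) rather than Corollary \ref{cor:asymp} itself, so that $C^{(2)}_{d,m,\alpha,a}$ genuinely depends only on $d,m,\alpha,a$ and $C^{(1)}_{d,m,\alpha,a,p}$ only on $d,m,\alpha,a,p$.
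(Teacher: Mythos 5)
Your proof is correct and follows essentially the same route as the paper: both arguments are direct consequences of the $v$- and $\lambda_0$-independent lower bound of Remark \ref{rem:nolambda2}, squared and integrated against $|x|^p$ over a region inside the well, with $1+\kappa=\lambda_a/(\lambda_a-v+|\lambda_0|)$ supplying the claimed factor. The only (inessential) difference is how the profile is handled: you localise to the annulus $\cB_{a/2}\setminus\cB_{a/4}$ where $\big((a-|x|)/a\big)^{\alpha/2}\ge 1/2$, while the paper keeps the whole ball $\cB_a$ and uses $1+2\kappa s^{\alpha/2}+\kappa^2 s^{\alpha}\ge(1+\kappa)^2 s^{\alpha}$ for $s=(a-|x|)/a\le 1$, absorbing $\int_{\cB_a}|x|^p s^{\alpha}\,dx$ into the constant.
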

\begin{proof}
By Remark \ref{rem:nolambda2} we get
\begin{align*}
\varphi_0^2(x)
&\ge \varphi^2_0(\mathbf{a})(C^{(3)}_{d,m,\alpha,a})^2\left(1+2\frac{v-|\lambda_0|}{\lambda_a-v+|\lambda_0|}
\left(\frac{a-|x|}{a}\right)^{\alpha/2}\right.\\
&\left.\qquad+\left(\frac{v-|\lambda_0|}{\lambda_a-v+|\lambda_0|}\right)^2
\left(\frac{a-|x|}{a}\right)^{\alpha}\right)e^{-2C^{(2)}_{d,m,\alpha,a}|\lambda_0|}\\
&\ge
\varphi^2_0(\mathbf{a})(C^{(3)}_{d,m,\alpha,a})^2\left(\frac{\lambda_a}{\lambda_a-v+|\lambda_0|}\right)^2
\left(\frac{a-|x|}{a}\right)^{\alpha}e^{-2C^{(2)}_{d,m,\alpha,a}|\lambda_0|},  \quad |x|\le a,
\end{align*}
where the last step follows by the fact that $\frac{a-|x|}{a} \le 1$. Hence
\begin{align*}
\int_{\R^d}|x|^p\varphi_0^2(x)dx&\ge \int_{\cB_a}|x|^p\varphi_0^2(x)dx\\
&\ge \varphi^2_0(\mathbf{a})(C^{(3)}_{d,m,\alpha,a})^2
\left(\frac{\lambda_a}{\lambda_a-v+|\lambda_0|}\right)^2e^{-2C^{(2)}_{d,m,\alpha,a}|\lambda_0|}
\int_{\cB_a}|x|^{p}\left(\frac{a-|x|}{a}\right)^{\alpha}dx.
\end{align*}
Setting
$(C^{(1)}_{d,m,\alpha,a,p})^p=(C^{(3)}_{d,m,\alpha,a})^2\int_{\cB_a}|x|^{p}\left(\frac{a-|x|}{a}\right)^{\alpha}dx$,
the result follows.
\end{proof}

\begin{proposition}
\label{moments2}
Let $0<p<p_*(m,\alpha)$ and $v>\lambda_a+\delta$ for some $\delta>0$. Then
there exists a constant $C_{d,m,\alpha,\delta,a,p}>0$ such that
\begin{equation*}
\Lambda_p(\varphi_0)
\le C_{d,m,\alpha,\delta,a,p}
\varphi^{2/p}_0(\mathbf{a})\left(\frac{\lambda_a}{\lambda_a-v+|\lambda_0|}\right)^{2/p}.
\end{equation*}
\end{proposition}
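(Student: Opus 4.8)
The plan is to estimate $\Lambda_p^p(\varphi_0)=\int_{\cB_a}|x|^p\varphi_0^2(x)\,dx+\int_{\cB_a^c}|x|^p\varphi_0^2(x)\,dx$ by splitting at $\partial\cB_a$ and bounding each term by a constant multiple of $\varphi_0^2(\mathbf a)(1+\kappa)^2$, where $\kappa=\frac{v-|\lambda_0|}{\lambda_a-v+|\lambda_0|}$, and then extracting the $p$th root, using $1+\kappa=\frac{\lambda_a}{\lambda_a-v+|\lambda_0|}$. The preliminary observation that makes all constants independent of $v$ and $\lambda_0$ is that, since $v-|\lambda_0|<\lambda_a$ by \eqref{eq:upboundeig} (equivalently \cite[Lem. 4.5]{AL3}), the hypothesis $v>\lambda_a+\delta$ forces $|\lambda_0|>v-\lambda_a>\delta$.

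For the interior term I would use the upper bound of Remark \ref{rem:nolambda2}, whose comparability constant depends only on $d,m,\alpha,a$: for $|x|\le a$ one has $\varphi_0(x)\le C_{d,m,\alpha,a}\varphi_0(\mathbf a)\big(1+\kappa((a-|x|)/a)^{\alpha/2}\big)\le C_{d,m,\alpha,a}\varphi_0(\mathbf a)(1+\kappa)$ because $(a-|x|)/a\le 1$. Squaring and integrating $|x|^p$ over $\cB_a$ then gives $\int_{\cB_a}|x|^p\varphi_0^2(x)\,dx\le C_{d,m,\alpha,a,p}\varphi_0^2(\mathbf a)(1+\kappa)^2$, with $\int_{\cB_a}|x|^p\,dx=d\omega_d a^{d+p}/(d+p)$.

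For the exterior term, recall from the proof of Theorem \ref{mainth} (Step 2) that $\varphi_0(x)\le\varphi_0(0)\,\E^x[e^{-|\lambda_0|T_a}]$ for $|x|\ge a$, while $\varphi_0(0)\le C_{d,m,\alpha,a}\varphi_0(\mathbf a)(1+\kappa)$ by Remark \ref{rem:nolambda2} evaluated at $x=0$. Since $|\lambda_0|>\delta$ and $u\mapsto e^{-uT_a}$ is non-increasing, $\E^x[e^{-|\lambda_0|T_a}]\le\E^x[e^{-\delta T_a}]$, and Theorem \ref{thm:upboundLap} with $\lambda=\delta$ and $R=a$ bounds this by $C_{d,m,\alpha,a,\delta}\,j_{m,\alpha}(|x|)$ for $|x|\ge a$. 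Hence $\varphi_0(x)\le C_{d,m,\alpha,a,\delta}\varphi_0(\mathbf a)(1+\kappa)\,j_{m,\alpha}(|x|)$ there, so $\int_{\cB_a^c}|x|^p\varphi_0^2(x)\,dx\le C_{d,m,\alpha,a,\delta}\varphi_0^2(\mathbf a)(1+\kappa)^2\int_{\cB_a^c}|x|^p j_{m,\alpha}^2(|x|)\,dx$, and the last integral is finite precisely because $p<p_*(m,\alpha)$ (the same computation as in the finiteness lemma preceding Proposition \ref{moments1}).

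Adding the two contributions and taking the $p$th root yields $\Lambda_p(\varphi_0)\le C_{d,m,\alpha,\delta,a,p}\varphi_0^{2/p}(\mathbf a)(1+\kappa)^{2/p}=C_{d,m,\alpha,\delta,a,p}\varphi_0^{2/p}(\mathbf a)\big(\frac{\lambda_a}{\lambda_a-v+|\lambda_0|}\big)^{2/p}$, which is the claim. I expect the only genuinely delicate point — and the sole place where the hypothesis $v>\lambda_a+\delta$ is needed — to be keeping the constant independent of $v$ and $\lambda_0$: interior uniformity is automatic from Remark \ref{rem:nolambda2}, while exterior uniformity comes from $|\lambda_0|>\delta$, which lets one trade the $\lambda_0$-dependent decay bound $\E^x[e^{-|\lambda_0|T_a}]$ for the fixed one $\E^x[e^{-\delta T_a}]$; note that $v$ then drops out of all constants and survives only inside $\kappa$.
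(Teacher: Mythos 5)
Your argument is correct and follows the paper's proof essentially step for step: the same split at $\partial\cB_a$, the interior bound from Remark \ref{rem:nolambda2} with $\frac{a-|x|}{a}\le 1$, the exterior chain $\varphi_0(x)\le\varphi_0(0)\E^x[e^{-|\lambda_0|T_a}]$ with $\varphi_0(0)$ controlled via Remark \ref{rem:nolambda2}, and the key use of $|\lambda_0|>v-\lambda_a>\delta$ to replace $\E^x[e^{-|\lambda_0|T_a}]$ by $\E^x[e^{-\delta T_a}]\le C\,j_{m,\alpha}(|x|)$ via Theorem \ref{thm:upboundLap}. Your closing remark about where the hypothesis $v>\lambda_a+\delta$ enters and why the constants are uniform matches the role it plays in the paper's argument.
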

\begin{proof}
As in Theorem \ref{mainth}, observe that for $|x|\ge a$ we have by Proposition \ref{ass:mon}
\begin{equation}\label{eq:control4}
\varphi_0(x)\le \varphi_0(0)\E^x[e^{-|\lambda_0|T_a}].
\end{equation}
Moreover, by Remark \ref{rem:nolambda2},
\begin{equation}\label{eq:control5}
\varphi_0(0)\le
\frac{C^{(1)}_{d,m,\alpha,a}\lambda_a}{\lambda_a-v+|\lambda_0|}\varphi_0(\mathbf{a}).
\end{equation}
On the other hand, from $v-|\lambda_0|<\lambda_a$ we get $|\lambda_0|>v-\lambda_a>\delta$ and then
\begin{equation}\label{eq:control6}
\E^x[e^{-|\lambda_0|T_a}]\le \E^x[e^{-\delta T_a}]\le C^{(2)}_{d,m,\alpha,\delta,a}j_{m,\alpha}(|x|),
\quad |x|\ge a,
\end{equation}
where we used also Theorem \ref{thm:upboundLap}. Combining \eqref{eq:control5}-\eqref{eq:control6} with
\eqref{eq:control4}, we  obtain
\begin{equation}\label{eq:control7}
\varphi_0(x)\le \frac{C^{(3)}_{d,m,\alpha,\delta,a}\lambda_a}{\lambda_a-v+|\lambda_0|}\varphi_0(\mathbf{a})
j_{m,\alpha}(|x|), \quad |x|\ge a,
\end{equation}
where $C^{(3)}_{d,m,\alpha,\delta,a}=C^{(1)}_{d,m,\alpha,a}C^{(2)}_{d,m,\alpha,\delta,a}$.
For $|x|\le a$ we have directly by Remark \ref{rem:nolambda2}
\begin{align}\label{eq:control8}
\begin{split}
\varphi_0(x)
&\le C^{(1)}_{d,m,\alpha,a}\varphi_0(\mathbf{a})\left(1+\frac{v-|\lambda_0|}{\lambda_a-v+|\lambda_0|}
\left(\frac{a-|x|}{a}\right)^{\frac{\alpha}{2}}\right)
\le \frac{C^{(1)}_{d,m,\alpha,a} \lambda_a}{\lambda_a-v+|\lambda_0|} \varphi_0(\mathbf{a}),
\end{split}
\end{align}
where again we used that $\frac{a-|x|}{a}\le 1$. Hence by \eqref{eq:control7}-\eqref{eq:control8} we get
\begin{align*}
\int_{\R^d}|x|^p\varphi^2_0(x)dx&=	\int_{\cB_a}|x|^p\varphi^2_0(x)dx+	\int_{\cB_a^c}|x|^p\varphi^2_0(x)dx\\
&\le C^p_{d,m,\alpha,\delta,a,p}\left(\frac{\lambda_a}{\lambda_a-v+|\lambda_0|}\right)^2 \varphi^2_0(\mathbf{a}),
\end{align*}
where
\begin{equation*}
C^p_{d,m,\alpha,\delta,a,p}=\max\Big\{(C^{(1)}_{d,m,\alpha,a})^2\int_{\cB_a}|x|^pdx, \
(C^{(3)}_{d,m,\alpha,\delta,a})^2\int_{\cB_a^c}|x|^pj^2_{m,\alpha}(x)dx\Big\}.
\end{equation*}
\end{proof}

\begin{remark}
{\rm
As discussed in Section 2.2, a ground state exists for all $v>0$ when the process $\pro X$ is recurrent,
and it only exists for $v > v^*$ with a given $v^* = v^*(\alpha, m, a, d) > 0$ when the process is transient.
An interesting question is to analyze the blow-up rate of $\Lambda_p(\varphi_0)$ for some $p$ as $v \downarrow
v^*$. This would require a good control of the $v$-dependence of $\lambda_0$ and the comparability constants,
however, both appear to be rather involved. An expression of $\lambda_0=\lambda_0(v)$ may in principle be
expected to follow from the continuity condition $\varphi_0(\mathbf{a}-)=\varphi_0(\mathbf{a}+)$, however,
this seems to be difficult to obtain in any neat explicit form. In fact, even in the classical Schr\"odinger
eigenvalue problem this is a transcendental equation which can only numerically be solved, and the similar
blow-up problem also becomes untractable in terms of closed form expressions.
}
\end{remark}

\subsection{Extension to fully supported decaying potentials}
Our technique to derive local estimates on the ground state of a non-local Schr\"odinger operator with
a compactly supported potential can be extended to potentials supported everywhere on $\R^d$. This is
of interest since apart from decay rates as $|x|\to\infty$ (see \cite{KL17}), there is no information
on the behaviour of the ground state from small to mid range.

Consider a potential $V(x)=-v(|x|)$ with a continuous non-increasing function $v: \R^+ \to \R^+$ such that
$\lim_{r \to \infty}v(r)=0$. We assume that $H_{m,\alpha}$ has a ground state $\varphi_0$ with eigenvalue
$\lambda_0 < 0$. We already know from Remark \ref{rmk:rsym} that $\varphi_0$ is radially symmetric, thus
we can write $\varphi_0(x)=\varrho_0(|x|)$ with a suitable $\varrho_0:\R^+ \to \R^+$. Also in this case we
will suppose the following condition to hold.
\begin{assumption}
\label{ass:mon2}
The function $\varrho_0:[0,\infty) \to \R$ is non-increasing.
\end{assumption}

A first main result of this section is as follows.
\begin{theorem}\label{thm:main1noncomp}
Let $\varphi_0$ be the ground state of $H_{m,\alpha}$ with $V(x)=-v(|x|)$, $v:\R^+ \to \R^+$ non-increasing
and continuous. Let Assumption \ref{ass:mon2} hold and consider any $\gamma>0$ such that the level set
$\cK_\gamma=\{x \in \R^d: \ V(x)< -\gamma \} \neq \emptyset$. Then there exists a constant $C_
{d,m,\alpha,\gamma,|\lambda_0|}$ such that
\begin{equation*}
C_{d,m,\alpha,\gamma,|\lambda_0|}\varphi_0(x_\gamma)\E^x[e^{(\gamma-|\lambda_0|)\tau_{r_\gamma}}]
\le \varphi_0(x)\le \varphi_0(x_\gamma)\E^x[e^{(v(0)-|\lambda_0|)\tau_{r_\gamma}}], \quad x \in \cK_\gamma,
\end{equation*}
where $\tau_{r_\gamma}=\inf\{t>0: \ X_t \in \cK_\gamma^c\}$, $x_\gamma \in \partial \cK_\gamma$ is
arbitrary and $r_\gamma=|x_\gamma|$.
\end{theorem}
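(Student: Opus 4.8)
The argument rests on the stopped Feynman--Kac identity of Proposition \ref{prop:stop}: for any $\bP$-a.s. finite $\pro{\cF^X}$-stopping time $\tau$ one has $\varphi_0(x)=\E^x[e^{-\int_0^\tau(V(X_s)-\lambda_0)\,ds}\varphi_0(X_\tau)]$, which here reads $\varphi_0(x)=\E^x[e^{\int_0^\tau(v(|X_s|)-|\lambda_0|)\,ds}\varphi_0(X_\tau)]$ since $\lambda_0=-|\lambda_0|$. I would first record the elementary geometric fact that, because $v$ is continuous, non-increasing and $v(r)\to0$, the super-level set $\cK_\gamma=\{v(|x|)>\gamma\}$ is exactly the open ball $\cB_{r_\gamma}$ with $r_\gamma=\sup\{r>0:v(r)>\gamma\}<\infty$ and $v(r_\gamma)=\gamma$. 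Hence $\tau_{r_\gamma}=\inf\{t>0:X_t\notin\cB_{r_\gamma}\}$ is a.s. finite (bounded domain), for every $s<\tau_{r_\gamma}$ we have $\gamma<v(|X_s|)\le v(0)$, and $|X_{\tau_{r_\gamma}}|\ge r_\gamma$, so that $\varphi_0(X_{\tau_{r_\gamma}})\le\varphi_0(x_\gamma)$ by Assumption \ref{ass:mon2} and radial monotonicity.

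The \emph{upper bound} is then immediate: applying Proposition \ref{prop:stop} with $\tau=\tau_{r_\gamma}$ and bounding $v(|X_s|)\le v(0)$ on $[0,\tau_{r_\gamma})$ together with $\varphi_0(X_{\tau_{r_\gamma}})\le\varphi_0(x_\gamma)$ gives $\varphi_0(x)\le\varphi_0(x_\gamma)\E^x[e^{(v(0)-|\lambda_0|)\tau_{r_\gamma}}]$ for all $x\in\cK_\gamma$ (the right-hand side being possibly $+\infty$, in which case there is nothing to prove).

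For the \emph{lower bound} I would mirror Step 1 of the proof of Theorem \ref{mainth}. Keeping $\tau=\tau_{r_\gamma}$ and now using $v(|X_s|)>\gamma$ on $[0,\tau_{r_\gamma})$, I bound the weight below by $e^{(\gamma-|\lambda_0|)\tau_{r_\gamma}}$, restrict the expectation to the event $\{r_\gamma\le|X_{\tau_{r_\gamma}}|\le C^{(1)}r_\gamma\}$ with $C^{(1)}=C^{(1)}_{d,m,\alpha,r_\gamma}$ the constant from Lemma \ref{lem:contrfun}, and there estimate $\varphi_0(X_{\tau_{r_\gamma}})\ge\varphi_0(C^{(1)}x_\gamma)$ by monotonicity. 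Lemma \ref{lem:contrfun} applied to $g(t)=e^{(\gamma-|\lambda_0|)t}$ then yields $\varphi_0(x)\ge\tfrac12\varphi_0(C^{(1)}x_\gamma)\E^x[e^{(\gamma-|\lambda_0|)\tau_{r_\gamma}}]$. It remains to compare $\varphi_0(C^{(1)}x_\gamma)$ with $\varphi_0(x_\gamma)$ by a constant depending only on $d,m,\alpha,\gamma,|\lambda_0|$. For this I would apply Proposition \ref{prop:stop} at the point $C^{(1)}x_\gamma$ with the a.s. finite stopping time $T_{r_\gamma}\wedge C^{(3)}$, where $T_{r_\gamma}=\inf\{t>0:X_t\in\cB_{r_\gamma}\}$ and $C^{(3)}$ is finite and to be chosen, then drop the expectation to the event $\{T_{r_\gamma}\le C^{(3)}\}$ (the integrand being nonnegative). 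On that event $T_{r_\gamma}\wedge C^{(3)}=T_{r_\gamma}$, the process has stayed in $\cB_{r_\gamma}^c$ up to $T_{r_\gamma}$ so that $0\le v(|X_s|)\le\gamma$ there and the weight is $\ge e^{-|\lambda_0|T_{r_\gamma}}\ge e^{-|\lambda_0|C^{(3)}}$, while $\varphi_0(X_{T_{r_\gamma}})\ge\varphi_0(x_\gamma)$ since $X_{T_{r_\gamma}}\in\overline{\cB_{r_\gamma}}$. This gives $\varphi_0(C^{(1)}x_\gamma)\ge e^{-|\lambda_0|C^{(3)}}\varphi_0(x_\gamma)\,\bP^{C^{(1)}x_\gamma}(T_{r_\gamma}\le C^{(3)})$; since $\nu_{m,\alpha}(\cB_{r_\gamma})>0$, the process jumps into $\cB_{r_\gamma}$ within a bounded time with positive probability, so one may fix $C^{(3)}$ with $\bP^{C^{(1)}x_\gamma}(T_{r_\gamma}\le C^{(3)})\ge c>0$, $c$ a pure path-space quantity. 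Collecting the pieces produces $\varphi_0(x)\ge C_{d,m,\alpha,\gamma,|\lambda_0|}\varphi_0(x_\gamma)\E^x[e^{(\gamma-|\lambda_0|)\tau_{r_\gamma}}]$.

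The point requiring the most care, relative to the compactly supported case, is this Harnack-type comparison $\varphi_0(C^{(1)}x_\gamma)\ge c\,\varphi_0(x_\gamma)$: because $V$ no longer vanishes outside $\cK_\gamma$, the hitting-time argument must carry the extra multiplicative weight $e^{\int_0^{T_{r_\gamma}}(v(|X_s|)-|\lambda_0|)\,ds}$, and one has to exploit precisely that $0\le v\le\gamma$ off $\cK_\gamma$ to control it from below by $e^{-|\lambda_0|C^{(3)}}$ while keeping the resulting constant free of $\varphi_0$ and dependent only on $d,m,\alpha$, on $\gamma$ (through $r_\gamma$), and on $|\lambda_0|$. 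The remaining items --- that $\tau_{r_\gamma}$ and $T_{r_\gamma}\wedge C^{(3)}$ are a.s. finite $\pro{\cF^X}$-stopping times so that Proposition \ref{prop:stop} applies, and that $\E^x[e^{(\gamma-|\lambda_0|)\tau_{r_\gamma}}]<\infty$ (forced a posteriori by boundedness of $\varphi_0$, as in Remark \ref{rem:nolambda}) --- are routine.
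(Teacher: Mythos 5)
Your proof is correct and follows the paper's route almost verbatim: the upper bound via Proposition \ref{prop:stop} with $v(|X_s|)\le v(0)$ and Assumption \ref{ass:mon2}, and the lower bound by restricting to the event $\{r_\gamma\le|X_{\tau_{r_\gamma}}|\le C^{(1)}_{d,m,\alpha,r_\gamma}r_\gamma\}$, using radial monotonicity and Lemma \ref{lem:contrfun} with $g(t)=e^{(\gamma-|\lambda_0|)t}$ to pick up the factor $\tfrac12$, are exactly the paper's steps. The only place where you deviate is the comparison $\varphi_0(C^{(1)}x_\gamma)\ge c\,\varphi_0(x_\gamma)$: the paper also starts from $\varphi_0(C^{(1)}x_\gamma)\ge\E^{C^{(1)}x_\gamma}[e^{-|\lambda_0|T_{r_\gamma}}\varphi_0(X_{T_{r_\gamma}})]$ (using $v\ge 0$ off $\cK_\gamma$), but then bounds $\E^{C^{(1)}x_\gamma}[e^{-|\lambda_0|T_{r_\gamma}}]$ from below by Corollary \ref{cor:goodlowbound}, obtaining the explicit constant $C^{(3)}_{d,m,\alpha,r_\gamma,|\lambda_0|}\,j_{m,\alpha}(C^{(1)}_{d,m,\alpha,r_\gamma}r_\gamma)$, whereas you re-run the elementary truncation argument of Step 1 of Theorem \ref{mainth} (stop at $T_{r_\gamma}\wedge C^{(3)}$, restrict to $\{T_{r_\gamma}\le C^{(3)}\}$, bound the weight by $e^{-|\lambda_0|C^{(3)}}$ and the hitting probability below by a positive constant, e.g.\ via strict positivity of $p_1$). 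Both are valid and yield a constant depending only on $d,m,\alpha,\gamma$ (through $r_\gamma$) and $|\lambda_0|$; yours is more self-contained, the paper's gives the constant in closed form through the jump kernel. Your side remarks — that the inequalities hold in $[0,\infty]$ so the upper bound is vacuous if infinite, and that finiteness of the lower-bound expectation is forced a posteriori (cf.\ Remark \ref{rem:gamma0}) — are consistent with the paper.
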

\begin{proof}
Take $x \in \cK_\gamma$ and notice that since $v$ is non-increasing and continuous, $\cK_\gamma$ is an open
ball centered at the origin, i.e., there exists $r_\gamma > 0$ such that $\cK_\gamma=\cB_{r_\gamma}$. Consider
the stopping time
\begin{equation*}
\tau_{r_\gamma}=\inf\{t>0: \ X_{t} \in \cB_{r_\gamma}^c\}.
\end{equation*}
Since $\varphi_0$ is radially symmetric, it is constant on $\partial \cB_{r_\gamma}$. Take any $x_\gamma \in
\partial \cB_{r_\gamma}$. By Proposition \ref{prop:stop} and Assumption \ref{ass:mon2} we have
\begin{equation*}
\varphi_0(x)=\E^x[e^{\int_0^{\tau_{r_\gamma}}v(|X_s|)ds-|\lambda_0|\tau_{r_\gamma}}\varphi_0(X_{\tau_{r_\gamma}})]
\le\varphi_0(x_\gamma)\E^x[e^{(v(0)-|\lambda_0|)\tau_{r_\gamma}}].
\end{equation*}
Consider $C^{(1)}_{d,m,\alpha,r_\gamma}>1$ defined in Lemma \ref{lem:contrfun} and observe that
\begin{align}
\begin{split}
\label{eq:contr0noncomp}
\varphi_0(x)&\ge \E^x[e^{\int_0^{\tau_{r_\gamma}}v(|X_s|)ds-|\lambda_0|\tau_{r_\gamma}}
\varphi_0(X_{\tau_{r_\gamma}});  r_\gamma \le X_{\tau_{r_\gamma}}\le C^{(1)}_{d,m,\alpha,r_\gamma}r_\gamma]\\
&\ge
\varphi_0(C^{(1)}_{d,m,\alpha,r_\gamma}x_\gamma)\E^x[e^{(\gamma-|\lambda_0|)\tau_{r_\gamma}};  r_\gamma \le
X_{\tau_{r_\gamma}}\le C^{(1)}_{d,m,\alpha,r_\gamma}r_\gamma].
\end{split}
\end{align}
Also, notice that by the definition of $C^{(1)}_{d,m,\alpha,r_\gamma}$,
\begin{equation}\label{eq:contr1noncomp}
\E^x[e^{(\gamma-|\lambda_0|)\tau_{r_\gamma}};  r_\gamma \le X_{\tau_{r_\gamma}}
\le C^{(1)}_{m,\alpha,r_\gamma}r_\gamma]\ge \frac{1}{2}\E^x[e^{(\gamma-|\lambda_0|)\tau_{r_\gamma}}].
\end{equation}
On the other hand, arguing as in Theorem \ref{mainth}, we have
\begin{equation*}
\varphi_0(C^{(1)}_{d,m,\alpha,r_\gamma}x_\gamma)\ge \E^{C^{(1)}_{d,m,\alpha,a}x_\gamma}[e^{-|\lambda_0|T_{r_\gamma}}
\varphi_0(X_{T_{r_\gamma}})],
\end{equation*}
where $T_{r_\gamma}=\inf\{t>0: \ X_{t} \in \cK_\gamma\}$ and we used the fact that $v(|x|)\ge 0$ for all
$x \in \R^d$. By Assumption \ref{ass:mon2} we have
\begin{equation}\label{eq:contr2noncomp}
\varphi_0(C^{(1)}_{m,\alpha,r_\gamma}x_\gamma)\ge \varphi_0(x_\gamma)\E^{C^{(2)}_{m,\alpha,a}x_\gamma}
[e^{-|\lambda_0|T_{r_\gamma}}]\ge C^{(2)}_{d,m,\alpha,r_\gamma,|\lambda_0|}\varphi_0(x_\gamma),
\end{equation}
where
\begin{equation*}
C^{(2)}_{d,m,\alpha,r_\gamma,|\lambda_0|}
:=C^{(3)}_{d,m,\alpha,r_\gamma,|\lambda_0|}j_{m,\alpha}(C^{(1)}_{d,m,\alpha,r_\gamma}r_\gamma),
\end{equation*}
$C^{(3)}_{d,m,\alpha,r_\gamma,|\lambda_0|}$ is defined in Corollary \ref{cor:goodlowbound} by choosing
$R_2> C_{d,m,\alpha,r_\gamma}^{(1)}r_\gamma$. Combining \eqref{eq:contr1noncomp}-\eqref{eq:contr2noncomp}
with \eqref{eq:contr0noncomp} the claim follows.
\end{proof}
\begin{remark}\label{rem:gamma0}
{\rm
We note that when $v(0)-|\lambda_0| \ge \lambda_{r_\gamma}$, the upper bound is trivial as
$\E^x[e^{(v(0)-|\lambda_0|)\tau_{r_\gamma}}] =\infty$. Also, if $|\lambda_0| \ge \gamma$, then the lower bound
is trivial since $\E^x[e^{(\gamma-|\lambda_0|)\tau_{r_\gamma}}] \le 1$ and $\varphi_0(x)\ge \varphi_0(x_\gamma)$
by Assumption \ref{ass:mon2}. Furthermore, by a similar argument as in Step 1 of Theorem \ref{mainth}, the implication
is that $\gamma-|\lambda_0|<\lambda_{r_\gamma}$ whenever $\cK_\gamma \neq \emptyset$. In particular, due to
$\lim_{\gamma \to v(0)}\lambda_{r_\gamma}=\infty$, there is a constant $\gamma_0>0$ such that $v(0)-|\lambda_0|
<\lambda_{r_\gamma}$ for every $\gamma \in (\gamma_0,v(0))$.
}
\end{remark}
Exploiting the asymptotic behaviour of the moment generating function involved as above for the spherical
potential well, we have the following result.
\begin{corollary}\label{cor:noncompsup}
Let $\varphi_0$ be the ground state of $H_{m,\alpha}$ with $V(x)=-v(|x|)$, $v:\R^+ \to \R^+$ non-increasing and
continuous. Let Assumption \ref{ass:mon2} hold, and consider any $\gamma>0$ such that the set $\cK_\gamma=\{x \in
\R^d: \ V(x)< -\gamma \} \neq \emptyset$, $|\lambda_0|<\gamma$ and $v(0)-|\lambda_0|<\lambda_{\cK_\gamma}$. Then
there exists a constant $C^{(1)}_{d,m,\alpha,\gamma,|\lambda_0|}$ such that
\begin{multline*}
C^{(1)}_{d,m,\alpha,\gamma,|\lambda_0|}\varphi_0(x_\gamma)\left(1+\frac{\gamma-|\lambda_0|}{\lambda_{\cK_\gamma}
-\gamma+|\lambda_0|}\left(\frac{r_\gamma-|x|}{r_\gamma}\right)^{\frac{\alpha}{2}}\right)\\ \le \varphi_0(x)
\le \varphi_0(x_\gamma)\left(1+\frac{v(0)-|\lambda_0|}{\lambda_{\cK_\gamma}-v(0)
+|\lambda_0|}\left(\frac{r_\gamma-|x|}{r_\gamma}\right)^{\frac{\alpha}{2}}\right),
\end{multline*}
for every $x \in \cK_\gamma$, where $\tau_{r_\gamma}=\inf\{t>0: \ X_t \in \cK_\gamma^c\}$, $x_\gamma \in \partial
\cK_\gamma$ is arbitrary, and $r_\gamma=|x_\gamma|$.
\end{corollary}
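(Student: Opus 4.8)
The plan is to read the estimate off directly from Theorem~\ref{thm:main1noncomp} by inserting the two-sided bounds on the moment generating function of the exit time $\tau_{r_\gamma}$ supplied by Theorem~\ref{thm:CompMomGenFun}. Since $v$ is non-increasing and continuous, the level set $\cK_\gamma$ is the ball $\cB_{r_\gamma}$, so its principal Dirichlet eigenvalue is $\lambda_{\cK_\gamma}=\lambda_{r_\gamma}$ in the notation of Section~3, and Theorem~\ref{thm:main1noncomp} already gives
\begin{equation*}
C_{d,m,\alpha,\gamma,|\lambda_0|}\,\varphi_0(x_\gamma)\,\E^x\big[e^{(\gamma-|\lambda_0|)\tau_{r_\gamma}}\big]
\le \varphi_0(x)\le \varphi_0(x_\gamma)\,\E^x\big[e^{(v(0)-|\lambda_0|)\tau_{r_\gamma}}\big],\qquad x\in\cK_\gamma .
\end{equation*}

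First I would verify that Theorem~\ref{thm:CompMomGenFun} applies with $R=r_\gamma$ to both exponents, i.e.\ that $0\le\gamma-|\lambda_0|<\lambda_{r_\gamma}$ and $0\le v(0)-|\lambda_0|<\lambda_{r_\gamma}$. Positivity of $\gamma-|\lambda_0|$ is the hypothesis $|\lambda_0|<\gamma$; since $\cK_\gamma\neq\emptyset$ and $v$ is non-increasing we have $v(0)>\gamma$, hence $v(0)-|\lambda_0|>\gamma-|\lambda_0|>0$ as well. The inequality $\gamma-|\lambda_0|<\lambda_{r_\gamma}$ follows from Remark~\ref{rem:gamma0} (equivalently: $\varphi_0(x)<\infty$ forces $\E^x[e^{(\gamma-|\lambda_0|)\tau_{r_\gamma}}]<\infty$, and by Theorem~\ref{thm:CompMomGenFun} this can happen only below $\lambda_{r_\gamma}$, arguing as in Step~1 of Theorem~\ref{mainth}), while $v(0)-|\lambda_0|<\lambda_{r_\gamma}=\lambda_{\cK_\gamma}$ is precisely the standing hypothesis of the corollary; this is the one genuinely extra assumption, without which the upper bound is vacuous, as $\E^x[e^{(v(0)-|\lambda_0|)\tau_{r_\gamma}}]=\infty$ (again see Remark~\ref{rem:gamma0}).

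Once these side conditions hold, Theorem~\ref{thm:CompMomGenFun} yields
\begin{equation*}
\E^x\big[e^{\lambda\tau_{r_\gamma}}\big]-1\;\asymp\;\frac{\lambda}{\lambda_{r_\gamma}-\lambda}\Big(\frac{r_\gamma-|x|}{r_\gamma}\Big)^{\alpha/2}
\end{equation*}
with comparability constants depending only on $d,m,\alpha,r_\gamma$ (hence on $d,m,\alpha,\gamma$); taking the constant in front of the $1$ on the right to be $\ge 1$, this gives $\E^x[e^{\lambda\tau_{r_\gamma}}]\asymp 1+\frac{\lambda}{\lambda_{r_\gamma}-\lambda}\big(\frac{r_\gamma-|x|}{r_\gamma}\big)^{\alpha/2}$. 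Applying this with $\lambda=\gamma-|\lambda_0|$ in the lower bound of Theorem~\ref{thm:main1noncomp} and with $\lambda=v(0)-|\lambda_0|$ in its upper bound, and absorbing the constants from Theorem~\ref{thm:CompMomGenFun} into $C_{d,m,\alpha,\gamma,|\lambda_0|}$ (so producing the stated $C^{(1)}_{d,m,\alpha,\gamma,|\lambda_0|}$ on the lower side and a matching comparability constant on the upper side), gives exactly the claimed double inequality.

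I do not expect a substantive obstacle: the argument is essentially bookkeeping that combines two already-established estimates, and the only point requiring care is that the two extra hypotheses $|\lambda_0|<\gamma$ and $v(0)-|\lambda_0|<\lambda_{\cK_\gamma}$ are exactly what legitimizes both invocations of Theorem~\ref{thm:CompMomGenFun} and makes both resulting bounds non-trivial.
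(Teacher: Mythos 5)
Your proposal is correct and follows essentially the same route as the paper's own (very terse) proof: start from Theorem \ref{thm:main1noncomp} with $\cK_\gamma=\cB_{r_\gamma}$, invoke Remark \ref{rem:gamma0} for $\gamma-|\lambda_0|<\lambda_{r_\gamma}$ and the standing hypothesis for $v(0)-|\lambda_0|<\lambda_{r_\gamma}$, and then insert the two-sided estimate of Theorem \ref{thm:CompMomGenFun}. Your explicit remark that a comparability constant also arises on the upper side is accurate and is in fact implicitly required by the paper's argument as well.
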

\begin{proof}
Starting from \eqref{thm:main1noncomp} and recalling that $\cK_\gamma=\cB_{r_\gamma}$, the upper bound follows from
the assumption that $v(0)-|\lambda_0|<\lambda_{r_\gamma}$ and Theorem \ref{thm:CompMomGenFun}. The lower bound
follows from Remark \ref{rem:gamma0} guaranteeing $\gamma-|\lambda_0|< \lambda_{r_\gamma}$, and furthermore by an
application of Theorem \ref{thm:CompMomGenFun}.
\end{proof}
\begin{theorem}\label{thm:mainoutnoncomp}
Let $\varphi_0$ be the ground state of $H_{m,\alpha}$ with $V(x)=-v(|x|)$, $v:\R^+ \to \R^+$ non-increasing and
continuous, and let Assumption \ref{ass:mon2} hold. Let $\gamma_1 \le |\lambda_0|$ and $\gamma_2 \in (\gamma_0,v(0))$,
where $\gamma_0$ is defined as in Remark \ref{rem:gamma0}, such that $\gamma_1 \le \gamma_2$. Define $\cK_{\gamma_i}
=\{x \in \R^d, V(x)<-\gamma_i\}$, $i=1,2$. Then
\begin{equation*}
\varphi_0(x_{\gamma_1})\E^x[e^{-|\lambda_0|T_{r_{\gamma_1}}}]\le \varphi_0(x)\le C_{d,m,\alpha,\gamma_2,|\lambda_0|}
\varphi_0(x_{\gamma_2}) \E^x[e^{(\gamma_1-|\lambda_0|)T_{r_{\gamma_1}}}], \quad x \in \cK_{\gamma_1},
\end{equation*}
where $x_{\gamma_i}\in \partial \cK_{\gamma_i}$ and $r_{\gamma_i}=|x_{\gamma_i}|$, $i=1,2$.
\end{theorem}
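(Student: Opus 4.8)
The plan is to follow the scheme of Step 2 in the proof of Theorem \ref{mainth}, now with the auxiliary level set $\cK_{\gamma_1}=\cB_{r_{\gamma_1}}$ playing the role of the ball $\cB_a$. Fix $x$ outside $\cK_{\gamma_1}$, write $T:=T_{r_{\gamma_1}}=\inf\{t>0:\ X_t\in\cB_{r_{\gamma_1}}\}$, and for $M\in\N$ set $T_M:=T\wedge M$, which is $\bP^x$-almost surely finite. Applying Proposition \ref{prop:stop} with the stopping time $T_M$ and using $-\int_0^{T_M}(V(X_s)-\lambda_0)\,ds=\int_0^{T_M}v(|X_s|)\,ds-|\lambda_0|T_M$ gives
\begin{equation*}
\varphi_0(x)=\E^x\Big[e^{\int_0^{T_M}v(|X_s|)\,ds-|\lambda_0|T_M}\varphi_0(X_{T_M})\Big].
\end{equation*}

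For the lower bound I would drop the nonnegative potential exponent and restrict to $\{T\le M\}$: on that event $T_M=T$ and $X_T\in\cB_{r_{\gamma_1}}$, so by Assumption \ref{ass:mon2} and radial symmetry $\varphi_0(X_T)\ge\varphi_0(x_{\gamma_1})$ (since $|X_T|\le r_{\gamma_1}=|x_{\gamma_1}|$); letting $M\uparrow\infty$ and using monotone convergence yields $\varphi_0(x)\ge\varphi_0(x_{\gamma_1})\E^x[e^{-|\lambda_0|T}]$, with the usual convention that the integrand vanishes on $\{T=\infty\}$.

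For the upper bound the key observation is that for $s<T_M\le T$ the path has not yet entered $\cB_{r_{\gamma_1}}$, so $|X_s|\ge r_{\gamma_1}$ and hence, $v$ being non-increasing and $v(r_{\gamma_1})=\gamma_1$ by continuity, one has $v(|X_s|)\le\gamma_1$; thus $\int_0^{T_M}v(|X_s|)\,ds\le\gamma_1 T_M$. Since $\varphi_0(X_{T_M})\le\varphi_0(0)$ by Assumption \ref{ass:mon2}, the displayed identity is bounded by $\varphi_0(0)\,\E^x[e^{(\gamma_1-|\lambda_0|)T_M}]$; because $\gamma_1\le|\lambda_0|$ the integrand is $\le 1$, so dominated convergence as $M\to\infty$ gives $\varphi_0(x)\le\varphi_0(0)\,\E^x[e^{(\gamma_1-|\lambda_0|)T}]$. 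It remains to replace $\varphi_0(0)$ by $\varphi_0(x_{\gamma_2})$: applying the inside estimate (Theorem \ref{thm:main1noncomp}, or Corollary \ref{cor:noncompsup}) at the origin with level $\gamma_2$ gives $\varphi_0(0)\le\varphi_0(x_{\gamma_2})\,\E^0[e^{(v(0)-|\lambda_0|)\tau_{r_{\gamma_2}}}]$, and the last expectation is a finite constant $C_{d,m,\alpha,\gamma_2,|\lambda_0|}$ by Theorem \ref{thm:CompMomGenFun} together with the fact, recorded in Remark \ref{rem:gamma0}, that $\gamma_2\in(\gamma_0,v(0))$ forces $v(0)-|\lambda_0|<\lambda_{r_{\gamma_2}}$. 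Combining the two bounds completes the proof.

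I expect the only genuinely delicate point to be the upper bound: unlike in the compactly supported case one cannot bound $\varphi_0(X_{T_M})$ from above by the boundary value $\varphi_0(x_{\gamma_1})$, since the process may still lie far outside at time $M$, so one is forced to bound it by $\varphi_0(0)$ and then to control $\varphi_0(0)$ independently. This is exactly the role of the second, strictly positive level $\gamma_2>\gamma_0$: it keeps the auxiliary moment generating function $\E^0[e^{(v(0)-|\lambda_0|)\tau_{r_{\gamma_2}}}]$ below the blow-up threshold $\lambda_{r_{\gamma_2}}$. A minor technical nuisance, handled throughout by the truncation $T_M$ and monotone or dominated convergence, is the possible non-finiteness of $T_{r_{\gamma_1}}$ in the transient regime.
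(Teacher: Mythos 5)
Your proposal is correct and follows essentially the same route as the paper: the paper likewise stops at $T_{r_{\gamma_1}}$ to get $\varphi_0(x_{\gamma_1})\E^x[e^{-|\lambda_0|T_{r_{\gamma_1}}}]\le\varphi_0(x)\le\varphi_0(0)\,\E^x[e^{(\gamma_1-|\lambda_0|)T_{r_{\gamma_1}}}]$ (invoking ``as in Theorem \ref{mainth}'', which is exactly your truncation $T\wedge M$ argument together with $v(|X_s|)\le\gamma_1$ outside $\cK_{\gamma_1}$ and $\varphi_0(X_{T_M})\le\varphi_0(0)$), and then bounds $\varphi_0(0)\le C_{d,m,\alpha,\gamma_2,|\lambda_0|}\varphi_0(x_{\gamma_2})$ via the inside estimate at level $\gamma_2$ (Corollary \ref{cor:noncompsup}, i.e.\ Theorem \ref{thm:main1noncomp} combined with Theorem \ref{thm:CompMomGenFun} and Remark \ref{rem:gamma0}). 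Your write-up merely makes explicit the limiting and monotonicity details that the paper leaves implicit.
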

\begin{proof}
By a similar argument as in Theorem \ref{thm:main1noncomp}, there exist $r_{\gamma_i}$ such that $\cK_{\gamma_i}=
\cB_{r_{\gamma_i}}$, $i=1,2$. Moreover, $\cK_{\gamma_1}^c \subseteq \cK_{\gamma_2}^c$ since $v$ is non-increasing.
Let $x \in \cK_{\gamma_1}^c$ and observe that, as in Theorem \ref{mainth},
\begin{equation*}
\varphi_0(x_{\gamma_1})\E^x[e^{-|\lambda_0|T_{r_{\gamma_1}}}] \leq \varphi_0(x)
\le \varphi_0(0)\E^x[e^{(\gamma_1-|\lambda_0|)T_{r_{\gamma_1}}}],
\end{equation*}
where $x_{\gamma_1}\in \partial \cB_{r_{\gamma_1}}$. Using that $0 \in \cK_{\gamma_2}$, by Corollary \ref{cor:noncompsup}
we get
\begin{equation*}
\varphi_0(0)\le \varphi_0 (x_{\gamma_2})\left(1+\frac{v(0)-|\lambda_0|}{\lambda_{r_{\gamma_2}}-v(0)+|\lambda_0|}\right)=:
C_{d,m,\alpha,\gamma_2,|\lambda_0|}\varphi_0(x_{\gamma_2}), \quad x_{\gamma_2}\in \partial \cB_{r_{\gamma_2}}.
\end{equation*}
\end{proof}

Again, by using the asymptotics of the Laplace transform of the hitting times we get the following.
\begin{corollary}
Let $\varphi_0$ be the ground state of $H_{m,\alpha}$ with $V(x)=-v(|x|)$, $v:\R^+ \to \R^+$ non-increasing and
continuous, and let Assumption \ref{ass:mon2} hold. Choose $\gamma_1 \le |\lambda_0|$ and $\gamma_2 \in (\gamma_0,v(0))$,
where $\gamma_0$ is defined in Remark \ref{rem:gamma0}, such that $\gamma_1 \le \gamma_2$. Define $\cK_{\gamma_i}=
\{x \in \R^d, V(x)<-\gamma_i\}$, $i=1,2$. Then
\begin{equation*}
C^{(1)}_{d,m,\alpha,\gamma_1,|\lambda_0|}\varphi_0 (x_{\gamma_2})j_{m,\alpha}(|x|)\le \varphi_0(x)
\le C^{(2)}_{d,m,\alpha,\gamma_2,\gamma_1,|\lambda_0|}\varphi_0(x_{\gamma_2})j_{m,\alpha}(|x|),
\end{equation*}
where $x_{\gamma_i}\in \partial \cK_{\gamma_i}$ and $r_{\gamma_i}=|x_{\gamma_i}|$, $i=1,2$.
\end{corollary}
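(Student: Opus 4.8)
The plan is to run, with Theorem~\ref{thm:mainoutnoncomp} in place of Theorem~\ref{mainth}, the same passage to explicit rates that led from Theorem~\ref{mainth} to Corollary~\ref{cor:asymp}. Since $v$ is continuous and non-increasing, the level sets are centred balls, $\cK_{\gamma_i}=\cB_{r_{\gamma_i}}$ with $r_{\gamma_1}\ge r_{\gamma_2}$, and by radial symmetry together with Assumption~\ref{ass:mon2} the values $\varphi_0(x_{\gamma_i})=\varrho_0(r_{\gamma_i})$ are fixed, positive, and satisfy $\varphi_0(x_{\gamma_1})\le\varphi_0(x_{\gamma_2})\le\varphi_0(0)$. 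Theorem~\ref{thm:mainoutnoncomp} thus reduces the corollary to proving, for $|x|\ge r_{\gamma_1}$,
\begin{equation*}
\E^x\big[e^{-|\lambda_0|T_{r_{\gamma_1}}}\big]\ge C\,j_{m,\alpha}(|x|)\quad\text{and}\quad\E^x\big[e^{(\gamma_1-|\lambda_0|)T_{r_{\gamma_1}}}\big]\le C\,j_{m,\alpha}(|x|),
\end{equation*}
after which the comparability of $\varphi_0(x_{\gamma_1})$ and $\varphi_0(x_{\gamma_2})$ gives the asserted two-sided bound; in the lower bound one trades $\varphi_0(x_{\gamma_1})$ for $\varphi_0(x_{\gamma_2})$ through $\varphi_0(x_{\gamma_1})\ge(\varphi_0(x_{\gamma_1})/\varphi_0(0))\,\varphi_0(x_{\gamma_2})$, whose prefactor is free of $\gamma_2$, so the resulting constant has the stated dependence.

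For the upper hitting-time estimate, when $\gamma_1<|\lambda_0|$ I would apply Theorem~\ref{thm:upboundLap} with $\lambda:=|\lambda_0|-\gamma_1>0$ and $R=r_{\gamma_1}$, giving $\E^x[e^{(\gamma_1-|\lambda_0|)T_{r_{\gamma_1}}}]\le C_{d,m,\alpha,r_{\gamma_1},\lambda}\,j_{m,\alpha}(|x|)$ for all $|x|\ge r_{\gamma_1}$, hence $\varphi_0(x)\le C^{(2)}\varphi_0(x_{\gamma_2})\,j_{m,\alpha}(|x|)$. The borderline value $\gamma_1=|\lambda_0|$ is outside the scope of Theorem~\ref{thm:upboundLap}; there I would instead invoke the upper asymptotics $\varphi_0(x)\approx j_{m,\alpha}(x)$ recalled in Section~2.2, which --- being a comparability for large $|x|$ and with $\varphi_0$, $j_{m,\alpha}$ continuous and strictly positive on the compact annulus $r_{\gamma_1}\le|x|\le R$ --- upgrades to the required $\varphi_0(x)\le C^{(2)}\varphi_0(x_{\gamma_2})\,j_{m,\alpha}(|x|)$ on all of $\cK_{\gamma_1}^c$.

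For the lower hitting-time estimate I split into a near and a far range. For $r_{\gamma_1}\le|x|<R_2$ with any fixed $R_2>r_{\gamma_1}$, Corollary~\ref{cor:goodlowbound} gives $\E^x[e^{-|\lambda_0|T_{r_{\gamma_1}}}]\ge C\,j_{m,\alpha}(|x|)$, hence, via the lower bound of Theorem~\ref{thm:mainoutnoncomp}, $\varphi_0(x)\ge C^{(1)}\varphi_0(x_{\gamma_2})\,j_{m,\alpha}(|x|)$ there. For $|x|\ge R_2$ I would argue as in the proof of Corollary~\ref{cor:asymp}: the eigenfunction decay bounds \cite[Cor.~4.1]{KL17} for $m=0$ and \cite[Cor.~4.3(1)]{KL17} for $m>0$ give $\varphi_0(x)\ge c\,j_{m,\alpha}(|x|)$ for $|x|$ large, and the remaining bounded annulus is filled by continuity and strict positivity of $\varphi_0$ and $j_{m,\alpha}$; collecting constants completes the lower bound.

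I expect the far field $|x|\to\infty$ in the lower bound to be the main obstacle: the only hitting-time lower bound proved here, Corollary~\ref{cor:goodlowbound} (through Theorem~\ref{thm:lowboundLapundercrit}), is intrinsically local because the Ikeda--Watanabe and heat-kernel input on $\cB_{3R_2}\setminus\overline{\cB}_{r_{\gamma_1}}$ degenerates as $R_2\to\infty$; consequently a genuinely global lower bound must be imported from the eigenfunction decay results of \cite{KL17}, just as in Corollary~\ref{cor:asymp}. A lesser technical point is the borderline $\gamma_1=|\lambda_0|$ in the upper bound, which forces the small detour through the asymptotics of Section~2.2 instead of Theorem~\ref{thm:upboundLap}.
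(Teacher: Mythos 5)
Your upper bound is exactly the paper's (Theorem \ref{thm:mainoutnoncomp} plus Theorem \ref{thm:upboundLap}); your patch for the borderline $\gamma_1=|\lambda_0|$, which the paper passes over silently, is reasonable, though the $\varphi_0\approx j_{m,\alpha}$ asymptotics of Section 2.2 is stated there for compactly supported wells, so you should quote the decaying-potential results of \cite{KL17} directly. For the lower bound your route genuinely differs from the paper's. The authors do not split into near and far ranges: they introduce an auxiliary compactly supported well $\widetilde V=-\widetilde v\,\mathbf{1}_{\cK_{\gamma_1}}$ with $\widetilde v$ large enough that a ground state $\widetilde\varphi_0$ exists, apply Corollary \ref{cor:asymp} to get $\widetilde\varphi_0(x)/\widetilde\varphi_0(x_{\gamma_1})\ge C\,j_{m,\alpha}(|x|)$ on all of $\cK_{\gamma_1}^c$, and then run Theorem \ref{mainth} backwards to convert this into a \emph{global} lower bound for $\E^x[e^{-|\lambda_0|T_{r_{\gamma_1}}}]$ (this is precisely the global bound promised in Remark \ref{remcontr1}(2)), which is finally fed into the lower bound of Theorem \ref{thm:mainoutnoncomp}. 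The advantage of their detour is that everything stays inside results already established in the paper for the spherical well, so no new input on the fully supported potential is needed; your near/far argument instead requires importing the eigenfunction decay lower bounds of \cite{KL17} for the decaying potential $-v(|x|)$ itself, which is legitimate (that is the setting of \cite{KL17}) but is an extra external hypothesis check, and it leaves the constant bookkeeping looser: the far-field constant from \cite{KL17} and your trade $\varphi_0(x_{\gamma_1})\ge(\varphi_0(x_{\gamma_1})/\varphi_0(0))\,\varphi_0(x_{\gamma_2})$ involve quantities depending on $\varphi_0$ (equivalently on the whole potential), not only on $d,m,\alpha,\gamma_1,|\lambda_0|$ as the stated constant suggests. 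To be fair, the paper's own proof as written yields the lower bound with $\varphi_0(x_{\gamma_1})$ rather than $\varphi_0(x_{\gamma_2})$ and faces the same conversion issue, so this is a shared imprecision rather than a defect specific to your argument; with that caveat, your proposal is a correct alternative proof.
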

\begin{proof}
The upper bound follows directly by Theorems \ref{thm:mainoutnoncomp} and \ref{thm:upboundLap}. For the lower bound
first consider the potential well $\widetilde{V}=-\widetilde v\mathbf{1}_{\cK_{\gamma_1}}$, where $\widetilde v$ is
chosen to be large enough to guarantee the existence of a ground state $\widetilde{\varphi}_0$. Recall that
$\cK_{\gamma_1}$ is an open ball. By Corollary \ref{cor:asymp} we know that
\begin{equation*}
\frac{\widetilde{\varphi}_0(x)}{\widetilde{\varphi}_0(x_{\gamma_1})} \ge
C^{(3)}_{d,m,\alpha,\gamma_1,|\lambda_0|}j_{m,\alpha}(|x|), \quad x \in \cK_{\gamma_1}^c.
\end{equation*}
On the other hand, by Theorem \ref{mainth} we get
\begin{equation*}
\E^x[e^{-|\lambda_0|T_{r_{\gamma_1}}}]\ge
C^{(4)}_{d,m,\alpha,\gamma_1,|\lambda_0|}\frac{\widetilde{\varphi}_0(x)}{\widetilde{\varphi}_0(x_{\gamma_1})},
\quad x  \in \cK_{\gamma_1}^c.
\end{equation*}
Combining the previous estimates with the lower bound in Theorem \ref{thm:mainoutnoncomp}, the statement follows.
\end{proof}

\end{document}